\DeclareMathOperator\re{Re }
\DeclareMathOperator\im{Im }
\newcommand\Z{\mathbb Z}
\newcommand\Q{\mathbb Q}
\newcommand\R{\mathbb R}
\newcommand\C{\mathbb C}
\newcommand\J{\mathcal J}
\def\P{\mathbb P}
\def\H{\mathcal H}
\newcommand\F{\mathcal F}
\def\O{\mathfrak O}
\newcommand\z{\xi}
\def\bx{\textbf{x}}
\def\by{\textbf{y}}
\newcommand\D{\Delta}
\newcommand\<{\langle}
\def\>{\rangle}
\def\a{\alpha}
\def\b{\beta}
\def\g{\gamma}
\def\d{{\delta }}
\newtheorem{thm}{Theorem}
\newtheorem {prop}{Proposition}
\newtheorem {lem}{Lemma}
\newtheorem {defn}{Definition}
\newtheorem {exa}{Example}
\newtheorem{alg}{Algorithm}
\newtheorem {rem}{Remark}
\newtheorem {cor}{Corollary}
\begin{document}
\begin{frontmatter}          
%
\title{Reduction theory of binary forms}
\runningtitle{}

 
%

\author{\fnms{Lubjana} \snm{Beshaj}} 


\address{Department of Mathematics, \\ Oakland University, \\ Rochester, MI, USA;\\  E-mail: beshaj@oakland.edu}

\begin{abstract}
In these lectures we give an introduction to the reduction theory of binary forms starting with quadratic forms with real coefficients, Hermitian forms, and then define the  Julia quadratic for any degree $n$ binary form. A survey of a reduction algorithm over $\mathbb Z$ is described based on recent work of Cremona and Stoll. 
\end{abstract}

\begin{keyword}
Binary form, fundamental domain, reduction, complex upper-half plane $\H_2$, hyperbolic upper-half space $\H_3$.
\end{keyword}

\end{frontmatter}

 

\section*{Introduction}

The goal of these lectures is to give  an introduction to reduction theory of binary forms.   Since Gauss, the reduction theory of integral binary quadratic forms is quite completely understood. For binary Hermitian forms, this was studied starting with Hermite, Bianchi, and much developed by Elstrodt, Grunewland, and Mennicke  see \cite{egm}. 

In 1848, Hermite introduced a reduction theory for binary forms of degree $n$ which was developed more fully in 1917 by Julia in his thesis. For reducing binary forms of degree $n$, Julia introduced an irrational $SL_2(\Z)$-invariant of binary forms which is known in the literature as Julia's invariant. 

More recent work on this subject is done by Cremona in \cite{cremona-red}, where he gives reduction theory of binary cubic and quartic forms, and then Stoll and Cremona in \cite{stoll-cremona} for binary forms of degree $n \geq 2$.

We use reduction theory of  binary forms to study   the  following   problems. 

1)  For any  binary form $f$ defined over a  number  field $K$, find an  $GL_2(K)$-equivalent one $f^\prime$ such that  $f^\prime$ has minimal height as defined in \cite{nato-8}.

2)  Given a fixed value of the discriminant $\D$,  or a given set of invariants,  enumerate up to an $SL_2(\O_K)$-equivalence, all forms $f$ with the given discriminant or this set of invariants.

In Part 1, we start with   the basic theory of quadratic forms, see \cite{lam}.  In these lectures we will consider only positive definite binary quadratic forms, for  negative definite and indefinite see \cite{binary-quadratic}. Then, we give  a brief description of the modular action on the upper half plane, the fundamental domain, and the zero map which is a one to one map from the set of positive definite binary forms to the complex upper half plane $\H_2$.  

A positive definite binary quadratic form is called reduced when its image under the zero map is in the fundamental domain of the action of the modular group on the upper half plane.  Hence, the basic principle behind reduction theory is to associate to any positive definite quadratic a covariant point   in the complex upper half plane.  

The concluding section of Part 1 describes a method of reducing a   positive definite quadratic form to a reduced form and  also  an algorithm to count the number of reduced forms with fixed discriminant $\D$.  The method for reducing positive definite binary quadratics described in Section \ref{reduction-quadratic}  will be used in Part 3 to provide a reduction algorithm for any degree $n$ binary form defined over $\R$. 

To define reduction theory of degree $n$ binary forms defined over $\C$ first we have to consider reduction of binary quadratic Hermitian forms. 

 In Part 2,  Section \ref{hermitian-forms} we give some preliminaries  about Hermitian forms and Hermitian matrices,  see \cite{egm}, then we define the hyperbolic upper half space $\H_3$, and the zero map which gives a one to one correspondence between positive definite binary Hermitian forms and points in $\H_3$. We describe in detail the action of the special linear group $SL_2(\C)$ on the set of binary quadratic Hermitian forms as well as on $\H_3$.  But to define reduction theory for binary forms with complex coefficients we need  a discrete subring of $\C$ and a description of the fundamental domain.  

In section \ref{reduction-hermitian},  we consider the case when  $K$ is an imaginary quadratic number field, $K=\Q(\sqrt D)$, $D$ is a negative square free integer, and $\O_K$ it's ring of integers.  The "Bianchi group"  $\Gamma = SL_2( \O_K)$ acts on positive definite binary quadratic Hermitian forms preserving discriminants, and also (discretely) on $\H_3$. The latter action has a fundamental region $\F_K$, depending on $K$, as described in Section \ref{reduction-hermitian}.   We define  a positive definite binary quadratic Hermitian form to be reduced in the same way as we did in part 1. 

As an example we consider the case  when $K=\Q(i)$ and $\O_K = \Z[i]$. Then,  for some fixed values of the discriminant $\D$  of the binary quadratic Hermitian forms we display a table that gives the reduced forms with that given discriminant.

In Part 3, we provide a reduction  algorithm for binary forms of degree $n$ which is based on the basic theory of quadratic forms developed in Part 1 and 2.  For any binary form $f(X, Z)$ we   define the Julia invariant and Julia quadratic (covariant). We show that the Julia quadratic is a positive definite binary (Hermitian) quadratic form and then  the image of the  Julia quadratic under the zero map is a point in the upper half plane $\H_2$ ($\H_3$).  The degree $n$ binary form is called reduced if the image of the zero map is in the fundamental domain $\F$.

%


\bigskip

\noindent \textbf{Notation} Throughout this paper $k$ denotes a field not necessarily algebraically closed, unless otherwise stated. $K$ is   an algebraic number field and $\O_K$ its ring of integers. The discriminant of $K$ is denoted by $d_K$ while the discriminant of a polynomial $f$ of a binary form $F(X, Z)$ is denoted by $\D$. The Riemann sphere or the projective line are denoted by $\P^1$ and when the field needs to be pointed out, we will use $\P^1 (k)$ instead.

\bigskip

\newpage
\noindent \textbf{Part 1:  Binary quadratic forms} 

\vspace{3mm}

In this lecture, we give a brief description of the classical theory of binary quadratic forms with real coefficients. As it will be seen in Part 2 and Part 3 of these lectures, the quadratic forms with real coefficients will play a crucial role in the general reduction theory of binary forms.
\section{Quadratic forms over the reals} \label{binary-quad}
In this section we present some basics about binary quadratic forms.  For more details see \cite{lam}. Some of the results are elementary results from linear algebra and the proofs can be found in any linear algebra textbook; see     \cite{shilov} for a classical point of view with some emphasis on binary forms. 

\begin{defn} A \textbf{quadratic form over $\R$} is a function $Q: \R^n \to \R$ that has the form $Q(\textbf{x})= \textbf{x}^TA\textbf{x}$  where $A$ is a symmetric $n \times n$ matrix called the \textbf{matrix of the quadratic form}. 
\end{defn}

Two quadratic form $F(X, Z)$ and $G(X, Z)$ are said to be \textbf{equivalent over $\R$} if one can be obtained from the other by linear substitutions. In other words, 
\[ G(X, Z) = F(aX+bZ, cX+dZ),\]
for some $a, b, c, d \in \R$. In Section~\ref{bin-forms} we will define the equivalence for any degree $d$ binary forms over any field $k$.

\begin{lem} Let $F$,  $G$ be quadratic forms and $A_F$,  $A_G$ their corresponding matrices. Then, $F \sim G$ if and only if $A_F$ is similar to   $A_G$. 
\end{lem}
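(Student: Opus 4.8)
The plan is to translate the defining substitution directly into matrix language. Writing $\mathbf{x}=(X,Z)^T$ and collecting the substitution coefficients into the matrix $M=\begin{pmatrix} a & b \\ c & d\end{pmatrix}$, the point $(aX+bZ,\,cX+dZ)$ is exactly $M\mathbf{x}$. Hence, using $F(\mathbf{w})=\mathbf{w}^T A_F\,\mathbf{w}$, I would compute
\[
G(\mathbf{x}) = F(M\mathbf{x}) = (M\mathbf{x})^T A_F (M\mathbf{x}) = \mathbf{x}^T\bigl(M^T A_F M\bigr)\mathbf{x}.
\]
This single identity is the only substantive computation; everything else is bookkeeping around it.

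Next I would invoke the uniqueness of the symmetric matrix attached to a quadratic form. Over $\R$ (characteristic $\neq 2$) a form $Q(\mathbf{x})=\mathbf{x}^T B\mathbf{x}$ determines its symmetric $B$ uniquely, by polarization. Since $A_F$ is symmetric, so is $M^T A_F M$; comparing it with $G(\mathbf{x})=\mathbf{x}^T A_G\mathbf{x}$ then forces
\[
A_G = M^T A_F M .
\]
This settles the forward implication. For the converse, given such an $M$ I would read off its entries as $a,b,c,d$ and run the computation backwards to recover $G(X,Z)=F(aX+bZ,cX+dZ)$, i.e.\ $F\sim G$. Both directions are thus immediate once the identity above is in hand.

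The one point that genuinely deserves care --- and what I expect to be the main obstacle, in \emph{stating} the result correctly rather than in proving it --- is that the relation produced is $A_G=M^T A_F M$, which is \emph{congruence}, not similarity $A_G=M^{-1}A_F M$. The two coincide precisely when $M$ is orthogonal, but the definition of equivalence adopted here allows arbitrary $a,b,c,d\in\R$, so $M$ ranges over all of $\mathrm{GL}_2(\R)$ (and over $\mathrm{SL}_2(\R)$ in the reduction setting that follows). I would therefore read the word ``similar'' in the statement as meaning $T$-congruent, and phrase the lemma as: $F\sim G$ if and only if $A_G=M^T A_F M$ for some invertible $M$. With that reading the proof reduces to the displayed computation together with the uniqueness remark.
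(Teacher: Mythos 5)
Your proof is correct, and it is the standard argument this lemma implicitly appeals to: the paper itself gives no proof, deferring such elementary facts to linear algebra textbooks, and the computation $G(\mathbf{x}) = F(M\mathbf{x}) = \mathbf{x}^T\bigl(M^T A_F M\bigr)\mathbf{x}$ together with polarization is exactly that textbook argument. Your closing caveat is also the right one: the relation produced is congruence $A_G = M^T A_F M$, not similarity $A_G = M^{-1} A_F M$ (the two coincide only for orthogonal $M$), so the word ``similar'' in the statement must indeed be read as ``congruent'' for the lemma to be true as stated.
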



From now on the terms quadratic form and a symmetric matrix will be used interchangeably.

\begin{defn}
Let $Q(\textbf{x})=\textbf{x}^TA\textbf{x}$ be a quadratic form. 

i) The binary quadratic form $Q$ is \textbf{positive definite} if $Q(\textbf{x})>0$ for all nonzero vectors $\textbf{x} \in \R^n$, and $Q$ is \textbf{positive  semidefinite} if $Q(\textbf{x})\geq 0$ for all  $\textbf{x} \in \R^n$.  

ii) The binary quadratic form $Q$ is said to be \textbf{negative definite} if $Q(\textbf{x})<0$ for all nonzero vectors $\textbf{x} \in \R^n$, and $Q$ is \textbf{negative semidefinite} if $Q(\textbf{x})\leq 0$ for all  $\textbf{x} \in \R^n$.  

iii) $Q$ is \textbf{indefinite} if $Q(\textbf{x})$ is positive for some  $\textbf{x}$'s in  $\R^n$, and negative for others. 
\end{defn}

\begin{thm}\label{eigen}
Let $A$ be an $n \times n$ symmetric matrix, and suppose that $Q(\textbf{x})=\textbf{x}^TA\textbf{x}$. Then, 

i) $Q$ is  positive definite exactly when $A$ has only positive eigenvalues.

ii) $Q$ is negative definite exactly when $A$ has only negative eigenvalues.

iii) $Q$ is indefinite when $A$ has  positive and negative eigenvalues.
\end{thm}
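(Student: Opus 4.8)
The plan is to reduce everything to the diagonal case by invoking the spectral theorem for real symmetric matrices, which guarantees that $A$ admits an orthogonal diagonalization. First I would record the one fact that does the heavy lifting: since $A$ is symmetric there is an orthogonal matrix $P$ (so $P^T = P^{-1}$) with $P^T A P = D$, where $D = \text{diag}(\lambda_1, \dots, \lambda_n)$ and the $\lambda_i$ are exactly the (real) eigenvalues of $A$, listed with multiplicity.

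Next I would perform the change of variables $\textbf{x} = P\textbf{y}$. Because $P$ is invertible, this is a bijection of $\R^n$ onto itself sending $\textbf{0}$ to $\textbf{0}$; in particular $\textbf{x}$ ranges over all nonzero vectors precisely when $\textbf{y}$ does. Substituting and using orthogonality gives
\[
Q(\textbf{x}) = \textbf{x}^T A \textbf{x} = (P\textbf{y})^T A (P\textbf{y}) = \textbf{y}^T (P^T A P) \textbf{y} = \textbf{y}^T D \textbf{y} = \sum_{i=1}^n \lambda_i y_i^2 .
\]
Thus, up to an invertible linear change of coordinates, $Q$ is the diagonal form $\sum_i \lambda_i y_i^2$, and every sign question about $Q$ can be read off from this expression.

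From here each part follows quickly. For (i), if every $\lambda_i > 0$ then $\sum_i \lambda_i y_i^2 > 0$ for all $\textbf{y} \neq \textbf{0}$, so $Q$ is positive definite; conversely, evaluating at $\textbf{y} = \textbf{e}_j$ (equivalently at $\textbf{x}$ the corresponding eigenvector) gives $Q = \lambda_j$, so positive definiteness forces every eigenvalue to be positive. Part (ii) is identical with the inequalities reversed. For (iii), if $A$ has eigenvalues $\lambda_i > 0$ and $\lambda_j < 0$, then $Q$ evaluated at $\textbf{e}_i$ is positive and at $\textbf{e}_j$ is negative, so $Q$ is indefinite; conversely, if $Q$ takes both signs then $\sum_i \lambda_i y_i^2$ is somewhere positive and somewhere negative, forcing at least one positive and one negative eigenvalue.

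The proof is essentially routine once the spectral theorem is in hand, so the only real obstacle is that theorem itself: everything here rests on the existence of an orthogonal diagonalization, and in particular on the fact that a real symmetric matrix has only real eigenvalues together with a full orthonormal basis of eigenvectors. I would treat this as the cited input from linear algebra (per \cite{shilov}) rather than reprove it, after which the sign analysis of the diagonal form is the genuinely elementary part.
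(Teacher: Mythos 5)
Your proof is correct and follows essentially the same route as the paper: orthogonal diagonalization $P^TAP=D$ via the spectral theorem, the change of variables $\textbf{x}=P\textbf{y}$ reducing $Q$ to $\sum_i \lambda_i y_i^2$, and then reading the sign conditions off the diagonal form. Your write-up is in fact slightly more careful than the paper's (you spell out both directions of each equivalence and note that the substitution is a bijection on nonzero vectors), but the underlying argument is identical.
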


\begin{proof}
Since $A$ is symmetric, there exist matrices $P$ and $D$ such that $P^TAP=D$, where the columns of $P$ are orthonormal eigenvectors of $A$ and the diagonal entries of $D$ are the eigenvalues $\lambda_1, \dots, \lambda_n$
of $A$. Since $P$ is invertible, for a given $\textbf{x}$  we can define $\textbf{y} =P^{-1} \textbf{x}$, so that $\textbf{x}=P\textbf{y}$. Then
\[\begin{split}
Q(\bx) &= \bx^TA \bx=(P\textbf{y})^TA(P\textbf{y}) = \by^T(P^TAP)\by = \by^TD\by=\lambda_1y_1^2+ \cdots + \lambda_ny_n^2
\end{split}
\]
If the eigenvalues are all positive, then $Q(\bx) >0$ except when $\by = \textbf 0$, which implies $\bx = \textbf 0$. Hence $Q$ is positive definite. On the other hand, suppose that $A$ has a nonpositive eigenvalue say, $\lambda_1 \leq 0$. If $\by$ has $y_1=1$ and the other components are $0$, then for the corresponding $\bx \neq \textbf{0}$ we have 
\[Q(\bx) =\lambda_1 \neq 0\]
so that $Q$ is not positive definite. The proof of ii) and iii) follow in the same way.
\end{proof}

The above definitions of positive definite carry over to matrices and they are found everywhere in the linear algebra literature.

\begin{defn}
A symmetric $n \times n$ matrix $A$ is \textbf{positive definite} if the corresponding quadratic form $Q(\textbf{x})=\textbf{x}^TA\textbf{x}$  is positive definite. Analogous definitions apply for \textbf{negative definite} and \textbf{indefinite}. 
\end{defn}

\begin{thm}[Spectral Theorem] 
A matrix $A$ is orthogonally diagnosable, i.e. there exists an orthogonal matrix $P$ and a diagonal matrix $D$ such that  $A=PDP^{-1}$,  if and only if $A$ is symmetric. 
\end{thm}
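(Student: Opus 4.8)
The statement is a biconditional, so I would treat the two implications separately, expecting the forward direction to be routine and the converse to carry all the weight. For the easy direction, suppose $A = PDP^{-1}$ with $P$ orthogonal and $D$ diagonal. Since $P$ orthogonal means $P^{-1} = P^T$, we have $A = PDP^T$, and taking the transpose gives $A^T = (PDP^T)^T = P D^T P^T = PDP^T = A$, because a diagonal matrix is its own transpose. Hence $A$ is symmetric, and this needs nothing beyond the definition of an orthogonal matrix and the behavior of transpose under products.

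For the converse, that every real symmetric $A$ is orthogonally diagonalizable, I would argue by induction on $n$, the size of $A$. The base case $n=1$ is immediate, and the inductive step rests on producing a single real unit eigenvector and then peeling it off. The first point is to show that the eigenvalues of $A$ are real: over $\C$ the characteristic polynomial has a root $\lambda$ by the fundamental theorem of algebra, with eigenvector $v \in \C^n$, and computing the scalar $\bar v^T A v$ in two ways (using $A^T = A$ and the fact that $A$ is real) shows it equals its own complex conjugate, hence is real; since $\bar v^T A v = \lambda \, \|v\|^2$ and $\|v\|^2 > 0$, we conclude $\lambda \in \R$. With $\lambda$ real, the real linear system $(A - \lambda I)v = 0$ has a nonzero solution, so we may take a unit eigenvector $v_1 \in \R^n$.

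The second point is that the orthogonal complement $W = v_1^\perp$ is $A$-invariant: for $w \perp v_1$, symmetry of $A$ gives $\< Aw, v_1 \> = \< w, A v_1 \> = \lambda \< w, v_1 \> = 0$, so $Aw \in W$. Restricting $A$ to the $(n-1)$-dimensional space $W$ yields an operator that is again symmetric with respect to the inherited inner product, so by the inductive hypothesis $W$ has an orthonormal basis of eigenvectors. Adjoining $v_1$ produces an orthonormal basis of $\R^n$ consisting of eigenvectors of $A$, and placing these vectors as the columns of $P$ gives an orthogonal $P$ with $P^T A P = D$ diagonal, i.e. $A = P D P^{-1}$.

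The main obstacle is the reality step: one must first guarantee that an eigenvalue exists at all, which forces a detour through $\C$ and the fundamental theorem of algebra, since over $\R$ a matrix need not have any eigenvalue, and then one must prove that the eigenvalue so obtained is in fact real. A secondary point requiring care is that the eigenvectors assembled across the stages of the induction are mutually orthogonal; here this is automatic because at each stage we restrict to an orthogonal complement, but if instead one grouped eigenvectors by eigenvalue one would need the separate fact that eigenvectors for distinct eigenvalues of a symmetric matrix are orthogonal, together with Gram–Schmidt applied within each eigenspace.
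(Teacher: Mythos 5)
Your proof is correct. Note that the paper itself offers no proof of this theorem: it is quoted as a standard fact, with the surrounding text deferring all such linear-algebra results to textbooks (e.g.\ the reference to Shilov), so there is no internal argument to compare against. What you give is exactly the standard textbook proof: the forward direction by transposing $A = PDP^{T}$, and the converse by induction, using the conjugation computation $\bar v^{T}Av = \lambda\lVert v\rVert^{2}$ to force $\lambda \in \R$, then deflating onto the $A$-invariant complement $v_1^{\perp}$ where the restriction is again self-adjoint. All steps check out, including the two points you flag as delicate (existence of an eigenvalue requires passing through $\C$, and orthogonality of the assembled eigenvectors is automatic because each stage works inside the previous orthogonal complement). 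This is precisely the proof the paper implicitly relies on.
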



The following remarks are immediate consequence of the above. 

\begin{rem}
i) All eigenvalues of a symmetric matrix $A$ are real.

ii) Each eigenspace of a symmetric matrix $A$ has dimension equal to the multiplicity of the associated eigenvalue.
\end{rem}

\begin{thm}If $A$ is a symmetric positive definite matrix, then $A$ is nonsingular and $\det(A) >0$.
\end{thm}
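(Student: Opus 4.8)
The plan is to reduce the statement to a fact about the eigenvalues of $A$ and then read off both conclusions at once. First I would invoke Theorem~\ref{eigen}(i): because $A$ is symmetric and the associated quadratic form is positive definite, every eigenvalue $\lambda_1, \dots, \lambda_n$ of $A$ is strictly positive. This single observation does essentially all the real work.

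Next I would connect the determinant to these eigenvalues. By the Spectral Theorem there is an orthogonal matrix $P$ and a diagonal matrix $D$, whose diagonal entries are the eigenvalues $\lambda_1, \dots, \lambda_n$, such that $A = PDP^{-1}$. Since the determinant is invariant under similarity (using $\det(P)\det(P^{-1}) = 1$), I obtain
\[
\det(A) = \det(D) = \lambda_1 \lambda_2 \cdots \lambda_n .
\]

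Finally I would conclude directly: a product of strictly positive real numbers is strictly positive, hence $\det(A) > 0$. In particular $\det(A) \neq 0$, and a square matrix is nonsingular precisely when its determinant is nonzero, which yields the first assertion.

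There is no genuine obstacle in this argument; the substance is carried entirely by Theorem~\ref{eigen}, and the remaining ingredients are the standard facts that the determinant of a diagonal matrix is the product of its diagonal entries and that similar matrices have equal determinants. The only point deserving a moment's care is ensuring the eigenvalues are counted with multiplicity, so that their product really equals $\det(A)$; the diagonalization provided by the Spectral Theorem supplies exactly this, with the accompanying Remark guaranteeing the eigenvalues are real to begin with.
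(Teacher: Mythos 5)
Your proof is correct and follows essentially the same route as the paper's: both rest on Theorem~\ref{eigen} to see that all eigenvalues are positive and on the identity $\det(A)=\lambda_1\cdots\lambda_n$ to conclude $\det(A)>0$. If anything, yours is slightly more complete, since you justify the eigenvalue product formula via the Spectral Theorem and obtain nonsingularity as a consequence of $\det(A)>0$, whereas the paper asserts nonsingularity separately without detail.
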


\begin{proof}It is easy to check that $A$ is nonsingular.  Now let us show $\det(A)>0$. 
Suppose that $A$ has eigenvalues $\lambda_1, \dots, \lambda_n$ that are all real numbers. Then,  
\[\det(A) =\lambda_1 \lambda_2 \cdots \lambda_n.\] 
But from Theorem~\ref{eigen} all eigenvalues are positive, so is their product. Hence, $\det(A) >0$. 
\end{proof}

The proof of the following theorems is elementary and we skip the details.

\begin{thm}
A symmetric positive definite matrix $A$ has leading principal sub-matrices $A_1, A_2, \dots, A_n$ that are also positive definite. 
\end{thm}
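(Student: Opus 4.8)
The plan is to use the defining inequality of positive definiteness together with a zero-padding embedding that isolates each leading principal block. Fix an index $k$ with $1 \le k \le n$, and let $A_k$ denote the top-left $k \times k$ submatrix of $A$, whose entries are $a_{ij}$ for $1 \le i,j \le k$. First I would note that $A_k$ is itself symmetric: since $A$ is symmetric its $(i,j)$ and $(j,i)$ entries agree, and $A_k$ merely retains those entries with indices at most $k$, so the symmetry is inherited. Thus $A_k$ is the matrix of a genuine quadratic form on $\R^k$, and it makes sense to ask whether that form is positive definite.

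The heart of the argument is the embedding $\R^k \hookrightarrow \R^n$ obtained by appending zeros. Given any nonzero $\textbf{u} = (u_1, \dots, u_k)^T \in \R^k$, set $\bx = (u_1, \dots, u_k, 0, \dots, 0)^T \in \R^n$. Because $\textbf{u} \neq \textbf 0$, the vector $\bx$ is also nonzero. The key computation is that the trailing zeros annihilate every contribution of $A$ outside the top-left $k \times k$ block, so that $\bx^T A \bx = \textbf{u}^T A_k \textbf{u}$; I would verify this by expanding $\bx^T A \bx = \sum_{i,j} a_{ij} x_i x_j$ and observing that a term survives only when both $x_i$ and $x_j$ are nonzero, i.e. only when $i, j \le k$, which reassembles exactly the quadratic form of $A_k$ applied to $\textbf{u}$.

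With this identity in hand the conclusion is immediate: since $A$ is positive definite and $\bx \neq \textbf 0$, we have $\bx^T A \bx > 0$, hence $\textbf{u}^T A_k \textbf{u} > 0$. As $\textbf{u}$ was an arbitrary nonzero vector of $\R^k$, this shows $A_k$ is positive definite, and letting $k$ range over $1, \dots, n$ yields the claim for all the leading principal submatrices $A_1, \dots, A_n$ simultaneously. There is no serious obstacle here; the only point requiring a little care is the block bookkeeping in the step $\bx^T A \bx = \textbf{u}^T A_k \textbf{u}$, where one must check that the padding genuinely restricts the form to the intended contiguous top-left block rather than to some scattered set of entries.
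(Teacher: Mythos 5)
Your proof is correct and complete: the zero-padding embedding $\textbf{u} \mapsto \bx = (u_1,\dots,u_k,0,\dots,0)^T$, the identity $\bx^T A \bx = \textbf{u}^T A_k \textbf{u}$, and the conclusion $\textbf{u}^T A_k \textbf{u} > 0$ for nonzero $\textbf{u}$ constitute the standard argument for this fact. Note that the paper itself offers no proof --- it states that the result is elementary and skips the details --- so your write-up supplies exactly the argument the authors presumably had in mind, and there is nothing in the paper to contrast it with.
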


Indeed, one can prove the following.

\begin{thm}
A symmetric matrix $A$ is positive definite  if and only if the  leading principal sub-matrices satisfy 
\begin{equation}\label{eq1}
\det(A_1) >0, \,  \det(A_2)>0, \dots, \, \det(A_n) >0.
\end{equation}
\end{thm}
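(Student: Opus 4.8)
The forward direction is immediate from the two preceding theorems: if $A$ is positive definite, then each leading principal submatrix $A_k$ is positive definite, hence has positive determinant, which gives \eqref{eq1}. So the real content is the converse, and the plan is to prove it by induction on $n$. For the base case $n=1$ the matrix is the scalar $\det(A_1) > 0$, and the form $Q(\bx) = \det(A_1)\,x_1^2$ is positive definite. For the inductive step I would assume the equivalence in dimension $n-1$ and write $A$ in block form as
\[ A = \begin{pmatrix} A_{n-1} & b \\ b^T & a_{nn} \end{pmatrix}, \]
where $A_{n-1}$ is the $(n-1)$st leading principal submatrix, $b \in \R^{n-1}$, and $a_{nn} \in \R$. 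The leading principal minors of $A_{n-1}$ are exactly $\det(A_1), \dots, \det(A_{n-1})$, all positive by hypothesis, so the inductive hypothesis makes $A_{n-1}$ positive definite; in particular it is invertible.

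The key step is a congruence (block $LDL^T$) decomposition that isolates the new coordinate. Setting $L = \begin{pmatrix} I & A_{n-1}^{-1} b \\ 0 & 1 \end{pmatrix}$, the Schur complement $s = a_{nn} - b^T A_{n-1}^{-1} b$, and $D = \begin{pmatrix} A_{n-1} & 0 \\ 0 & s \end{pmatrix}$, a direct multiplication verifies that $A = L^T D L$. Taking determinants and using $\det L = 1$ yields $\det(A) = s\cdot\det(A_{n-1})$; since $\det(A) = \det(A_n) > 0$ and $\det(A_{n-1}) > 0$, I conclude $s > 0$. Finally, for any $\bx \neq \textbf{0}$ put $\by = L\bx$, which is nonzero because $L$ is invertible. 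Then $\bx^T A \bx = \by^T D \by$ splits as the (positive definite) form of $A_{n-1}$ evaluated on the first $n-1$ entries of $\by$ plus $s\,y_n^2$; both summands are nonnegative and vanish simultaneously only when $\by = \textbf{0}$, i.e.\ when $\bx = \textbf{0}$. Hence $\bx^T A \bx > 0$ for all $\bx \neq \textbf{0}$, so $A$ is positive definite, completing the induction.

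The main obstacle is precisely the converse direction, and within it the passage from ``$A_{n-1}$ positive definite together with $\det(A_n) > 0$'' to ``$A$ positive definite.'' The congruence decomposition $A = L^T D L$ is what makes this work, since it simultaneously reduces the quadratic form to the already-solved lower-dimensional block and produces the determinant identity $\det(A) = s\,\det(A_{n-1})$ that forces the sign of the Schur complement $s$. The only items requiring care are the verification of the block identity $A = L^T D L$ and the fact that $L$ is invertible with determinant one, but both are routine calculations. An alternative route would be a continuity/eigenvalue argument based on Theorem~\ref{eigen} — deforming $A$ to the identity while keeping all leading minors positive so that no eigenvalue can cross zero — but the inductive Schur-complement argument is more self-contained and avoids any appeal to connectedness.
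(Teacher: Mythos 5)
Your proof is correct, but there is nothing in the paper to compare it against: the paper states this theorem (Sylvester's criterion) immediately after declaring ``the proof of the following theorems is elementary and we skip the details,'' and supplies no argument of its own. Your argument fills that gap completely: the forward direction follows, as you say, from the two preceding theorems of the paper; the converse by induction on $n$ via the block congruence $A = L^T D L$, with $L = \begin{pmatrix} I & A_{n-1}^{-1} b \\ 0 & 1 \end{pmatrix}$ and Schur complement $s = a_{nn} - b^T A_{n-1}^{-1} b$, is the standard proof, and the determinant identity $\det(A_n) = s\,\det(A_{n-1})$ (valid since $\det L = 1$) correctly forces $s > 0$. The block identity itself is a routine check once one notes that $A_{n-1}^{-1}$ is symmetric, and the final passage from $\bx \neq \textbf{0}$ to $\by = L\bx \neq \textbf{0}$ is sound because $L$ is unitriangular. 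One further point worth making: your induction effectively constructs the factorization $A = LDL^T$ with $L$ unitriangular and $D$ diagonal with positive entries, which is exactly the content of the theorem the paper states immediately after this one (also without proof); so your Schur-complement route supplies the missing details for both of the paper's skipped statements at once.
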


The following is a well known result of basic linear algebra. We skip the proofs since they can be found in any textbook of linear algebra. 

\begin{thm} 
A symmetric matrix $A$ that satisfies Eq. ~\eqref{eq1} can be uniquely factored as $A= LDL^T$, where $L$ is a lower triangular matrix with 1's on the diagonal, and $D$ is a diagonal matrix with all positive diagonal entries. 
\end{thm}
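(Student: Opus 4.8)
The plan is to argue by induction on the size $n$ of the matrix, building the factorization one row and column at a time via a block decomposition; this is the symmetric analogue of Gaussian elimination. The base case $n=1$ is immediate: the hypothesis $\det(A_1)=a_{11}>0$ lets me take $L=(1)$ and $D=(a_{11})$, and this choice is clearly forced.

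For the inductive step I would write $A$ in block form
\[
A = \begin{pmatrix} A_{n-1} & b \\ b^T & a_{nn} \end{pmatrix},
\]
where $A_{n-1}$ is the leading $(n-1)\times(n-1)$ principal submatrix, $b \in \R^{n-1}$, and $a_{nn}\in\R$. The leading principal minors of $A_{n-1}$ are precisely $\det(A_1),\dots,\det(A_{n-1})$, hence positive by Eq.~\eqref{eq1}, so the induction hypothesis supplies a unique factorization $A_{n-1}=L_{n-1}D_{n-1}L_{n-1}^T$ with $L_{n-1}$ unit lower triangular and $D_{n-1}$ diagonal with positive entries. I then search for $L$ and $D$ of the block form
\[
L = \begin{pmatrix} L_{n-1} & 0 \\ \ell^T & 1 \end{pmatrix}, \qquad D = \begin{pmatrix} D_{n-1} & 0 \\ 0 & d_n \end{pmatrix}.
\]
Multiplying out $LDL^T$ and matching blocks against $A$ shows that the top-left block is automatic, the off-diagonal block forces $\ell = D_{n-1}^{-1}L_{n-1}^{-1}b$ (uniquely solvable since $L_{n-1}$ and $D_{n-1}$ are invertible), and the bottom-right entry forces $d_n = a_{nn} - \ell^T D_{n-1}\ell$.

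The main obstacle, and the only point where the full strength of the hypothesis is used, is proving $d_n>0$. Here I would exploit that $\det(L)=1$, so taking determinants in $A=LDL^T$ gives $\det(A)=\det(D)=\det(D_{n-1})\cdot d_n = \det(A_{n-1})\cdot d_n$. Since $\det(A)=\det(A_n)>0$ by Eq.~\eqref{eq1} and $\det(A_{n-1})>0$, I conclude $d_n = \det(A_n)/\det(A_{n-1})>0$, completing the existence claim with positive diagonal entries.

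For uniqueness I would give a direct argument independent of the induction. Suppose $A=L_1D_1L_1^T = L_2D_2L_2^T$ are two such factorizations and set $M = L_2^{-1}L_1$, a unit lower triangular matrix. Rearranging yields $M D_1 = D_2 M^{-T}$; the left-hand side is lower triangular while the right-hand side is upper triangular, so both must equal a diagonal matrix. Comparing diagonals, and using that $M$ and $M^{-T}$ have unit diagonal, forces $D_1 = D_2$; then $M D_1 = D_1$ together with invertibility of $D_1$ gives $M = I$, i.e. $L_1 = L_2$. This establishes uniqueness and finishes the proof.
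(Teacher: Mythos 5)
Your proof is correct. Note that the paper does not actually prove this theorem: it explicitly skips the proof, remarking that the result is standard and "can be found in any textbook of linear algebra," so there is no argument in the paper to compare against. Your induction via the block decomposition---with the determinant identity $d_n = \det(A_n)/\det(A_{n-1})$ supplying positivity of the new diagonal entry, and the lower-triangular-equals-upper-triangular comparison of $MD_1 = D_2M^{-T}$ supplying uniqueness---is exactly the standard textbook argument the paper defers to, so it fills the omission faithfully.
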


Extremal properties of quadratic forms, which are of particular interest on this paper, and other interesting topics can be found on \cite[pg. 276-279]{shilov}. 

Next we will develop some of the main concepts needed to discuss the reduction of quadratic forms which will lead us to the general theory of the reduction of binary forms of any degree.

\section{The modular group and the upper half plane} \label{modular-group}

In this section   we will describe the action of $GL_2(\C)$ on the Riemann sphere, and we will show that this action has only one orbit. 

 Let $\P^1$ be the Riemann sphere and $ GL_2(\C)$  the group of $2 \times 2$ matrices with entries in $\C$. The group $GL_2(\C)$ acts on $\P^1$  by linear fractional transformations  as follows   
\begin{equation}\label{action-over-C}
\left( \begin{matrix} \a & \, \b \\ \g&\,  \d  \end{matrix} \right) z= \frac{\a z+\b}{\g z+\d}
\end{equation}
where $\left( \begin{matrix} \a & \b\\ \g& \d  \end{matrix} \right) \in  GL_2(\C)$ and $z \in \P^1$.  It is easy to check that  this is a group action. If a group $G$ acts on a set $S$, we say that $G$ \textbf{acts transitively} if for each $x,y \in S$ there exists some $g \in G$ such that $g(x)=y$.

\begin{lem}
$  GL_2(\C)$ action on $\P^1$ is a transitive action, i.e has only one orbit. Moreover, the action of $ SL_2(\C)$ on $\P^1$ is also transitive. 
\end{lem}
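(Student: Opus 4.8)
The plan is to prove the stronger ``moreover'' assertion directly, since $SL_2(\C) \subseteq GL_2(\C)$ and transitivity of a subgroup immediately forces transitivity of the overgroup. The standard reduction is to fix a single reference point $z_0 \in \P^1$ and show that its orbit under $SL_2(\C)$ is all of $\P^1$; once every point lies in the orbit of $z_0$, transitivity on arbitrary pairs follows from the group structure. Concretely, if for each $w \in \P^1$ I can produce $g_w \in SL_2(\C)$ with $g_w(z_0) = w$, then given any $x, y \in \P^1$ the element $g_y \, g_x^{-1}$ lies in $SL_2(\C)$ and sends $x \mapsto y$.

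I would take $z_0 = 0$ as the reference point and exhibit the needed group elements explicitly. For a finite target $w \in \C$, the unipotent matrix $\left( \begin{matrix} 1 & w \\ 0 & 1 \end{matrix} \right)$ has determinant $1$ and, by \eqref{action-over-C}, sends $0 \mapsto w$. For the target $\infty$, the matrix $\left( \begin{matrix} 0 & -1 \\ 1 & 0 \end{matrix} \right)$ also has determinant $1$ and sends $0 \mapsto \infty$. Thus every point of $\P^1$ lies in the $SL_2(\C)$-orbit of $0$, which is exactly what the reduction above requires.

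Combining these two steps finishes both claims at once: $SL_2(\C)$ acts transitively on $\P^1$, and since $SL_2(\C) \subseteq GL_2(\C)$, so does $GL_2(\C)$. Alternatively, one can deduce the $SL_2$ statement from the $GL_2$ statement by rescaling: any $M \in GL_2(\C)$ can be multiplied by a scalar $\lambda$ with $\lambda^2 = (\det M)^{-1}$, which exists because $\C$ is algebraically closed, to obtain a determinant-one matrix defining the same transformation in \eqref{action-over-C}.

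The one point requiring genuine care — and the only real obstacle — is the correct interpretation of \eqref{action-over-C} at the point $\infty$ and at the pole $\g z + \d = 0$: one must check that $z = \infty$ maps to $\a / \g$ and that a finite $z$ with $\g z + \d = 0$ maps to $\infty$. Granting these conventions, verifying that the displayed matrices act as claimed is a direct substitution, and no further computation is needed.
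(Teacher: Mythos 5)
Your proof is correct and follows essentially the same strategy as the paper: exhibit explicit determinant-one matrices showing that the $SL_2(\C)$-orbit of a single reference point is all of $\P^1$, from which transitivity (and the $GL_2(\C)$ case) follows. The only difference is cosmetic — the paper uses $\infty$ as the base point with the single family $\left( \begin{smallmatrix} z & z-1 \\ 1 & 1 \end{smallmatrix} \right)$, whereas you use $0$ and split into two cases.
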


\begin{proof}

For every $z \in \C$,  
\[ \left( \begin{matrix} z & \, \, z-1\\1& \, \,  1  \end{matrix} \right) \infty= z\]
and  $\left| \begin{matrix} z & \, z-1\\1& 1  \end{matrix} \right|=1$.  So the orbit of infinity passes through all points.
\end{proof}

For the rest of this section we will consider the action of $SL_2(\R)$ on the Riemann sphere. Notice that this action   is not transitive, because as we will see below for $M= \left( \begin{matrix} \a & \,  \b\\ \g&\,  \d  \end{matrix} \right) \in GL_2(\R)$ we have
\[ \im \left(M z\right) = \frac{(\a \d -\b \g) \im  z}{|\g z+\d|^2} . \]
Hence, $z$ and $Mz$ have the same sign of imaginary part when $\det(M)=1$. Therefore we restrict this action only to one half-plane.   Let  $\H_2$ be the complex upper half plane, i.e 
\[\H_2 =\left \{ z = x+ i y \in \C \, \left | \frac{}{}\right. \, y>0\right \} \subset \C. \]
 The group $SL_2(\R)$  acts on $\H_2$ via linear fractional transformations. In the following lemma we prove that this action is transitive.
 
\begin{lem}
i)  The group $SL_2(\R)$ preserves $\H_2$ and acts transitively on it, further for $g \in SL_2(\R)$ and $z \in \H_2$ we have
\[\im  (gz) = \frac{\im  z}{|\g z +\d|^2}  \]
ii) The action of  $SL_2(\R)$  on $\P^1$ has three orbits,  namely $\R \cup \infty$, the upper half plane, and the lower-half plane.

\end{lem}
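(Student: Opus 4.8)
The plan is to make the explicit formula for $\im(gz)$ the engine of the whole argument, since it simultaneously controls invariance of the three pieces of $\P^1$ and, through positivity of its denominator, pins down the two open half-planes. I would prove the imaginary-part identity in part i) by direct computation: writing $g = \left(\begin{matrix} \a & \b \\ \g & \d \end{matrix}\right)$ with $\a\d - \b\g = 1$, I multiply numerator and denominator of $gz = \frac{\a z + \b}{\g z + \d}$ by $\overline{\g z + \d}$, using that the entries are real, to get
\[ gz = \frac{(\a z + \b)(\g \bar z + \d)}{|\g z + \d|^2}. \]
Expanding the numerator, the terms $\a\g|z|^2$ and $\b\d$ are real and the imaginary part collects to $(\a\d - \b\g)\im z = \im z$, giving $\im(gz) = \frac{\im z}{|\g z + \d|^2}$. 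Along the way I would note that for $z \in \H_2$ the denominator cannot vanish, since $\g z + \d$ has imaginary part $\g\,\im z$ and $\g = 0$ forces $\d \neq 0$ by $\det g = 1$; hence $gz$ is well defined.

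This identity immediately yields that $SL_2(\R)$ preserves $\H_2$, as the right-hand side is positive when $\im z > 0$. For transitivity I would exhibit an explicit element carrying $i$ to an arbitrary $z = x + iy \in \H_2$, namely $\left(\begin{matrix} \sqrt{y} & x/\sqrt{y} \\ 0 & 1/\sqrt{y} \end{matrix}\right)$, which has determinant $1$ and sends $i \mapsto x + iy$. Since any two points of $\H_2$ can then be joined through $i$ by composing such maps with inverses, the action is transitive, completing part i).

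For part ii) I would first check that each of the three sets is invariant: a real $z$ maps to the real number $\frac{\a z + \b}{\g z + \d}$, or to $\infty$ when the denominator vanishes, and $\infty \mapsto \a/\g$, so $\R \cup \{\infty\}$ is preserved, while the imaginary-part formula shows the lower half-plane is preserved as well. As these three sets are pairwise disjoint, invariant, and exhaust $\P^1$, no orbit can cross between them. Transitivity on $\H_2$ is already in hand, the lower half-plane follows by the identical computation, and for $\R \cup \{\infty\}$ the matrix $\left(\begin{matrix} z & -1 \\ 1 & 0 \end{matrix}\right)$ has determinant $1$ and sends $\infty \mapsto z$ for every real $z$; thus each set is a single orbit, and these are precisely the three orbits.

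The expansions are routine; the point that deserves care is confirming that the three sets form the \emph{entire} orbit decomposition, which rests on pairing invariance (from the sign of $\im(gz)$) with transitivity on each piece. The main conceptual observation is that one imaginary-part identity does double duty, both forbidding an orbit from crossing the real axis and fixing the two open half-planes.
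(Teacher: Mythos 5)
Your proof is correct and follows essentially the same route as the paper's: the imaginary-part identity is obtained by the same conjugate-multiplication computation, transitivity by an explicit upper-triangular matrix carrying $i$ to $x+iy$, and part ii) by combining invariance of the three pieces with transitivity on each. If anything, your version is more careful: the paper's transitivity matrix $\left(\begin{smallmatrix} a & b \\ 0 & 1 \end{smallmatrix}\right)$ has determinant $a$ and so does not lie in $SL_2(\R)$ as written, whereas your normalized matrix $\left(\begin{smallmatrix} \sqrt{y} & x/\sqrt{y} \\ 0 & 1/\sqrt{y} \end{smallmatrix}\right)$ does, and you spell out the orbit decomposition in ii) that the paper dismisses as ``obvious from the above.''
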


\begin{proof} Let us first prove that $\H_2$ is preserved under an $SL_2(\R)$ action.  Consider
\[ \left( \begin{matrix} \a & \b\\ \g& \d  \end{matrix} \right) \cdot z = \frac{\a z+ \b}{\g z+\d}\]
We want to find $\im  \left(  \frac{\a z+\b}{\g z+\d}  \right)$.  But  $\g z + \d= \g x+iy+\d = \g x+\d+ \d iy$, therefore it's conjugate is $(\d x+\d) -i\d y= \d \bar z +\d$ and 
\[(\d z+\d)(\d \bar z+ \d)= |\d z+\d|^2=(\d x+\d)^2+(\d y)^2.\] 
Hence,
\[
 \begin{split}
\frac{\a z+\b}{\d z+\d}&=  \frac{\a z+\b}{\d z+\d} \cdot  \frac{\d \bar z + \d}{\d \bar z+\d} =  \frac{(\a z+\b)(\d\bar z +\d)}{|\d z+\d|^2 }= \frac{\a \d z \bar z + \b \d + \a \d z + \b \d \bar z }{|\d z+\d|^2}\\
& =\frac{\a \d |z|^2 + \b \d + \a \d x + \a \d iy  + \b \d x - \b \d iy }{|\d z+\d|^2}\\
&= \frac{\a \d |z|^2 + \b \d + \a \d x +  \b \d x }{|\d z+\d|^2} + \frac{i(\a \d  - \b \d)y}{|\d z+\d|^2}
\end{split}
\]
Therefore we see that 
\[ \im \left( g z \right) = \frac{(\a \d - \b \d) \im  z}{|\d z+\d|^2} = \frac{\im  z}{|\d z+\d|^2} > 0 .\]
To show that $SL_2(\R)$ action on $\H_2$ is transitive, pick any $a+ib \in \H_2$. Then  if $g \in SL_2(\R)$ such that
\[g= \left( \begin{matrix} a & \, b\\0& \, 1 \end{matrix} \right): z \to a+ b z \]
we have  $g(i)=a+ib$. Thus the orbit of $i$ passes through all points in  $\H_2$ and so $SL_2(\R)$ is transitive in $\H_2$. 

ii) The result  is obvious from  above. 

\end{proof}


 Recall that a group action $G \times X \to X$ is called  \textbf{faithful} if there are no group elements $g$, except   the identity element, such that $g x=x$ for all $x \in X$.  The group  $SL_2(\R)$ does not act faithfully on $\H_2$ since  the elements $\pm I \in SL_2(\R)$
act trivially on $\mathcal H_2$. Hence, we consider the above action as  $PSL_2(\R)= SL_2(\R) /\{\pm I\}$ action.  This group acts faithfully on $\H_2$.  


\subsection{ The fundamental domain}\label{fund-domain}

Let $S$ be a set and $G$ a group acting on it. Two points $s_1, s_2$ are said to be \textbf{$G$-equivalent}  if $s_2= gs_1$ for some $g \in G$.  For any group $G$ acting on  a set $S$ to itself we call a \textbf{fundamental domain} $\F$, if one exists,  a subset of $S$ such that any point in $S$ is $G$-equivalent to some point in $\F$, and no two points in the interior of $\F$ are $G$-equivalent.

The group $\Gamma =SL_2(\Z) /\{\pm I\}$ is called the \textbf{modular group}.  It is easy to prove that $\Gamma$ action on $\H_2$ via linear fractional transformations is a group action.  This action has a  fundamental domain  $\F$ 
\[\F= \left\{\frac{}{} z \in \mathcal H_2  \left| \frac{}{} \right.    \, |z| \geq 1  \, \text{ and } \, |Re(z)| \leq 1/2 \right\}\]
as proven in the following theorem.

\begin{thm}\label{fund}

i) Every $z \in \H_2$ is $\Gamma$-equivalent to a point in $\F$. 

ii) No two points in the interior of $\F$ are equivalent under $\Gamma$. If two distinct points $z_1, z_2$ of $\F$ are equivalent under $\Gamma$ then $Re(z_1) = \pm 1/2$ and $ z_1= z_2 \pm 1$ or $|z_1|=1$ and $z_2 =-1/z_1$.

 iii) Let $z \in \F$ and $I(z) = \{ g \, | \, g \in \Gamma, \, gz =z\}$ the stabilizer of $z \in \Gamma$. One has $I(z) = \{1\}$ except in the following cases: 

  $z=i$, in which case $I(z)$ is the group of order 2 generated by $S$;
  
  $z=\rho=e^{2\pi i/3}$, in which case $I(z)$ is the group of order 3 generated by $ST$;
  
  $z=- \overline \rho=e^{\pi i/3}$, in which case $I(z)$ is the group of order 3 generated by $TS$.
\end{thm}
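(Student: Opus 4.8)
The plan is to reduce everything to the two standard generators of $\Gamma$, namely $T=\left(\begin{matrix} 1 & 1 \\ 0 & 1\end{matrix}\right)$ acting by $Tz=z+1$ and $S=\left(\begin{matrix} 0 & -1 \\ 1 & 0\end{matrix}\right)$ acting by $Sz=-1/z$, and to exploit the identity $\im(gz)=\im z\,/\,|\g z+\d|^2$ from the preceding lemma, where I write a general element as $g=\left(\begin{matrix} a & b \\ c & d\end{matrix}\right)$ with $ad-bc=1$ and $\g=c$, $\d=d$.

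For part (i), fix $z\in\H_2$ and let $\Gamma'=\langle S,T\rangle\subseteq\Gamma$. Since $z\notin\R$, the lattice $\Z z+\Z$ is discrete, so only finitely many integer pairs $(c,d)$ satisfy $|cz+d|\le R$ for any bound $R$; hence $\im(gz)$ attains a maximum as $g$ ranges over $\Gamma'$. Choose $g_0\in\Gamma'$ realizing this maximum and then a power $T^n$ so that $z'=T^n g_0 z$ has $|\re z'|\le 1/2$ (translation does not change the imaginary part, so $\im z'$ is still maximal). It remains to show $|z'|\ge 1$: were $|z'|<1$, the point $Sz'$ would satisfy $\im(Sz')=\im z'/|z'|^2>\im z'$, contradicting maximality. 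Thus $z'\in\F$, and as $\Gamma'\subseteq\Gamma$ this proves every $z$ is $\Gamma$-equivalent to a point of $\F$.

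For part (ii), suppose $z,z'\in\F$ are distinct with $z'=gz$; replacing $g$ by $g^{-1}$ if needed I may assume $\im z'\ge\im z$, i.e. $|cz+d|\le 1$. Since $z\in\F$ forces $\im z\ge\sqrt3/2$, the estimate $1\ge|cz+d|^2\ge c^2(\im z)^2\ge 3c^2/4$ leaves only $c\in\{-1,0,1\}$, and because $g$ and $-g$ induce the same map I may take $c\in\{0,1\}$. If $c=0$ then $a=d=\pm1$ and $z'=z+b$ for some $b\in\Z$; both points lying in $\F$ forces $b=\pm1$, whence $\re z=\mp1/2$, which is the first alternative. If $c=1$ then $|z+d|\le1$, and combined with $\im z\ge\sqrt3/2$ and $|\re z|\le1/2$ this forces $d\in\{-1,0,1\}$ and pins $z$ to the boundary of $\F$: the value $d=0$ gives $|z|=1$ and $z'=-1/z$, while $d=\pm1$ can only occur at the corners $\rho$ and $-\overline\rho$. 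I expect the genuine obstacle of the whole theorem to live exactly here: the imaginary-part bound cleanly cuts $(c,d)$ down to finitely many pairs, but one must then patiently sweep the subcases along the arc $|z|=1$ and the lines $\re z=\pm1/2$, verifying that each equivalent pair matches one of the two listed alternatives and that the corner points $i,\rho,-\overline\rho$ are not double-counted.

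For part (iii), apply the analysis of (ii) with $z_1=z_2=z$: a nontrivial element of $I(z)$ again forces $c\in\{0,1\}$, and $c=0$ only yields $\pm I$, so any nontrivial stabilizer places $z$ on the boundary of $\F$ and is assembled from $S$ and $T$. A direct check then gives $S(i)=i$ with $S$ of order $2$ in $\Gamma$, $(ST)\rho=\rho$ with $ST$ of order $3$, and $(TS)(-\overline\rho)=-\overline\rho$ with $TS$ of order $3$; at every other point of $\F$ the only solution of $gz=z$ is $g=\pm I$, so the stabilizer is trivial. This yields the three exceptional cyclic stabilizers of orders $2,3,3$ and completes the theorem.
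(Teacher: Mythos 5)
Your proposal is correct and follows essentially the same route as the paper's own proof: part (i) via the maximal-imaginary-part argument over $\langle S,T\rangle$ with a $T$-translation and the $S$-contradiction, and parts (ii)--(iii) via the bound $|cz+d|\le 1$ combined with $\im z\ge \sqrt3/2$ to force $c\in\{0,\pm1\}$, followed by the same case sweep on $(c,d)$. Your reduction to $c\in\{0,1\}$ by identifying $g$ with $-g$ is in fact a slightly cleaner way to dispose of the $c=-1$ case than the paper's sign-change remark, but otherwise the two arguments coincide.
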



\begin{proof}

i)  We want to show that for every $z \in \mathcal H_2$, there exists $g \in \Gamma$ such that $gz \in \F$.  Let $\Gamma^\prime$ be a subgroup of $\Gamma$ generated by  
\[S= \left(\begin{matrix} 0  &-1 \\ 1 &0 \end{matrix} \right): z \to - \frac 1 z \quad \text{and} \quad  T=\left(\begin{matrix} 1 \quad 1 \\ 0 \quad 1 \end{matrix} \right): z \to z+1.\] 
Note that when we apply  an appropriate $T^j$ to $z$  then we can get a point equivalent with $z$ inside the stripe $-\frac 1 2 \leq Re(z) \leq \frac 1 2$.  If the point lands outside the unit circle then we are done, otherwise we can apply $S$ to get it outside the unit circle and then apply again an appropriate $T^n$ to get it inside the stripe  $-\frac 1 2 \leq Re(z) \leq \frac 1 2$. 

Let  $g  \in \Gamma^\prime$. We have seen  that $\im  (gz) = \frac{\im  z}{|cz +d|^2}.$
Since, $c$ and $d$ are integers, the number of pairs $(c, d)$ such that $|cz + d|$ is less then a given number is finite. Hence, there is some  $g=\left(\begin{matrix} a  &b \\ c &d \end{matrix} \right) \in \Gamma^\prime$ such that $ \im  (gz)$ is maximal($|cz + d|$ is minimal). 

Without loss of generality, replacing $g$ by $T^ng$ for some $n$ we can assume that $gz$ is inside the strip $-\frac 1 2 \leq Re(z) \leq \frac 1 2$.  If $|gz| \geq 1$ we are done, otherwise we can apply $S$. Then, 
\[ \im  (Sgz) = \frac{\im (gz)}{|gz +0|^2}= \frac{\im ( gz)}{|gz|^2} >\im (gz) .\]
But this contradicts our choice of $g \in \Gamma^\prime$ so that $ \im  (gz)$ is maximal. 


ii, and iii)  Suppose $z_1, z_2 \in \F$ are $\Gamma$-equivalent. Without loss of generality assume $ \im  (z_1) \geq \im  (z_2)$. Let $g=\left(\begin{matrix} a  &b \\ c &d \end{matrix} \right) \in \Gamma$  be such that $z_2 = gz_1$.  Since
\[ \im  (gz_1) = \frac{\im (z_1)}{|cz +d|^2}.\]
we get $|cz + d|\leq 1$.  But $z_1 \in \F$, $d \in \Z$, and  $ \im  (z_1) \geq  \frac{ \sqrt 3} 2$ hence the inequality does not hold for $|c| \geq 2$, i. e  $c=0, \pm 1$.

\textbf{Case 1:} $c=0$.   Since $ad-bc=0$ and $c=0$, we have $a, d =\pm1$ and $g= \pm \left(\begin{matrix} 1  &b \\ 0 &1\end{matrix} \right)$.  Since $Re(z_1)$ and $Re(z_2)$ are both between $-\frac 1 2$ and $\frac 1 2$, this implies either $b=0$ and  $g= \pm \left(\begin{matrix} 1  &0 \\ 0 &1\end{matrix} \right)$ or $b=\pm1$ and  $g=  \left(\begin{matrix} 1  &\pm 1 \\ 0 &1\end{matrix} \right)$  in which case either $Re(z_1)=\frac 1 2$ and $Re(z_2) = - \frac 1 2$, or the other way around. 

\textbf{Case 2:} $c=1$. Since $|1z_1+d|<1$, then $d=0$ except when $z_1=\rho$, or $-\overline \rho$ in which cases $d=0,1$ and $d=0, -1$. 

Let us first consider the case $c=1$, $d=0$. In this case $z_1$ is in the unit circle since otherwise  $|1z +0| \leq 1$ is not fulfilled, and since   $ad-bc=1$, we have $b=-1$ and $g= \pm \left(\begin{matrix} a  &\, -1 \\ 1 &0 \end{matrix} \right): z_1 \to a -\frac 1 {z_1} $.   The case $|a| > 1$  is not possible, since $z_1$ and $gz_1$ are both in $\F$. 

If, $a=0$, $z_1, z_2$  are symmetrically located on the unit circle with respect to the imaginary axis.  And for $a =\pm 1$, $g= \pm \left(\begin{matrix} \pm1  &\, -1 \\ 1 &0 \end{matrix} \right)=\pm T^{\pm 1}S$ from case 1 we have that  $Re(z_1)=\frac 1 2$ and $Re(z_2) = - \frac 1 2$, or the other way around i. e. $z_1, z_2=\rho, - \overline \rho$. 

The case $z=\rho$, $d=1$ gives $a-b=1$ and $g\rho= a -\frac{1}{1+\rho} = a+\rho$, hence $a=0, 1$; we can argue similarly when $z=- \bar \rho$, $d=-1$. 

Finally to prove the case when $c=-1$,  we just need to change the signs of $a, b, c, d$.






\end{proof}

The following corollary is obvious. 

\begin{cor}\label{fund-cor}
The canonical map $\F \to \mathcal H_2/\Gamma$ is surjective and its restriction to the interior of $\F$ is injective. 
\end{cor}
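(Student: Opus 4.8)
The plan is to read off both assertions directly from Theorem~\ref{fund}, after first pinning down what the canonical map actually does. The canonical (quotient) map $\pi\colon \F \to \H_2/\Gamma$ sends a point $z \in \F$ to its $\Gamma$-orbit $\Gamma z$, and by definition two points of $\F$ have the same image under $\pi$ precisely when they are $\Gamma$-equivalent. So the whole corollary is just a translation of the two halves of Theorem~\ref{fund} into the language of the orbit space.

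For surjectivity I would argue as follows. An arbitrary element of $\H_2/\Gamma$ is an orbit $\Gamma w$ for some $w \in \H_2$. By Theorem~\ref{fund}(i) there exists $g \in \Gamma$ with $gw \in \F$, and since $gw$ lies in the same orbit as $w$ we have $\pi(gw) = \Gamma w$. Thus every orbit is the image of a point of $\F$, and $\pi$ is onto.

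For injectivity on the interior, suppose $z_1, z_2 \in \mathrm{int}(\F)$ satisfy $\pi(z_1) = \pi(z_2)$, i.e.\ $z_1$ and $z_2$ are $\Gamma$-equivalent points of $\F$. If $z_1 \neq z_2$, then Theorem~\ref{fund}(ii) forces either $\mathrm{Re}(z_1) = \pm 1/2$ (with $z_1 = z_2 \pm 1$) or $|z_1| = 1$ (with $z_2 = -1/z_1$). The one observation I need to supply is that the boundary $\partial \F$ consists exactly of the points with $\mathrm{Re}(z) = \pm 1/2$ or $|z| = 1$, so in either case $z_1, z_2 \in \partial\F$, contradicting $z_1, z_2 \in \mathrm{int}(\F)$. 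Hence $z_1 = z_2$, and the restriction $\pi|_{\mathrm{int}(\F)}$ is injective.

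I do not expect a genuine obstacle here, which is consistent with the statement being labelled obvious; the only point requiring a moment's care is the identification of the two ``degenerate'' alternatives in part (ii) with membership in $\partial\F$, so that the interior hypothesis can be invoked to rule them out. If one wanted the slightly stronger statement that $\pi$ descends to a \emph{homeomorphism} onto the quotient (rather than merely a bijection on the interior), one would additionally need the boundary gluing $z \mapsto -1/z$ on $|z|=1$ and $z \mapsto z+1$ on $\mathrm{Re}(z) = -1/2$ together with a continuity/properness argument; but the corollary as stated asks only for surjectivity and interior injectivity, which follow immediately from (i) and (ii).
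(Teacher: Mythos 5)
Your proof is correct and matches the paper's intent exactly: the paper labels this corollary ``obvious'' precisely because, as you show, surjectivity is a restatement of Theorem~\ref{fund}(i) and interior injectivity is a restatement of Theorem~\ref{fund}(ii). In fact, your boundary analysis is slightly more than needed, since the first sentence of part (ii) already asserts directly that no two (distinct) points of the interior of $\F$ are $\Gamma$-equivalent.
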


The following theorem determines the generators of the modular group and their relations. 

\begin{thm}
The modular group  $\Gamma$ is generated by  $S= \left(\begin{matrix} 0  &-1 \\ 1 &0 \end{matrix} \right)$  and     $ T=\left(\begin{matrix} 1 \quad 1 \\ 0 \quad 1 \end{matrix} \right)$, where $S^2=1$ and $(ST)^3=1$. 
\end{thm}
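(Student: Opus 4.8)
The plan is to prove the two relations by direct matrix multiplication and then to establish that $S$ and $T$ generate $\Gamma$ by leaning on the fundamental domain $\F$ from Theorem~\ref{fund}. The relations are immediate: computing in $SL_2(\Z)$ one finds $S^2 = \left(\begin{matrix} -1 & \, 0 \\ 0 & \, -1 \end{matrix}\right) = -I$ and, since $ST = \left(\begin{matrix} 0 & \, -1 \\ 1 & \, 1 \end{matrix}\right)$, also $(ST)^3 = \left(\begin{matrix} -1 & \, 0 \\ 0 & \, -1 \end{matrix}\right) = -I$. Because $\Gamma = SL_2(\Z)/\{\pm I\}$, each $-I$ is the identity, so $S^2 = 1$ and $(ST)^3 = 1$ in $\Gamma$. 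The entire substance of the theorem therefore lies in the generation claim.

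For that, let $\Gamma'$ denote the subgroup of $\Gamma$ generated by $S$ and $T$. The crucial observation is that the proof of Theorem~\ref{fund}(i) used \emph{only} powers of $S$ and $T$ to move an arbitrary $z \in \H_2$ into $\F$; hence the transitivity statement ``every point of $\H_2$ is equivalent to a point of $\F$'' already holds for the smaller group $\Gamma'$. I would now fix a base point in the interior of $\F$ with trivial stabilizer, for instance $z_0 = 2i$, which lies in the interior since $|z_0| = 2 > 1$ and $\re(z_0) = 0$, and whose stabilizer is $\{1\}$ by Theorem~\ref{fund}(iii).

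Given an arbitrary $g \in \Gamma$, the point $g z_0$ lies in $\H_2$, so by the version of Theorem~\ref{fund}(i) just noted there is some $h \in \Gamma'$ with $h\,g\,z_0 \in \F$. Then $z_0$ and $h g z_0$ are two $\Gamma$-equivalent points of $\F$, and since $z_0$ is interior, part (ii) of Theorem~\ref{fund} forces $h g z_0 = z_0$. As $z_0$ has trivial stabilizer, part (iii) gives $h g = 1$ in $\Gamma$, whence $g = h^{-1} \in \Gamma'$. Since $g$ was arbitrary, $\Gamma = \Gamma'$.

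The main obstacle is purely one of bookkeeping rather than of new ideas: one must choose the base point inside the \emph{interior} of $\F$ and with trivial stabilizer, so that the uniqueness in Theorem~\ref{fund}(ii)--(iii) pins $hg$ down as the identity rather than as one of the boundary identifications $z \mapsto z \pm 1$ or $z \mapsto -1/z$. It is also worth stressing that the reduction procedure hidden in the proof of Theorem~\ref{fund}(i) is exactly what produces the explicit word in $S$ and $T$ expressing a given $g$, which makes the whole argument constructive.
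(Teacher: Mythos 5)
Your proof is correct and takes essentially the same route as the paper: both define $\Gamma' = \langle S, T\rangle$, observe that the reduction in the proof of Theorem~\ref{fund}(i) uses only elements of $\Gamma'$ to move $g z_0$ into $\F$, and then apply the uniqueness statements of Theorem~\ref{fund} at an interior point to force $hg = 1$, hence $g \in \Gamma'$. Your write-up is in fact slightly more careful than the paper's, since you verify the relations $S^2 = (ST)^3 = 1$ in $PSL_2(\Z)$ explicitly and note that the interior base point must have trivial stabilizer before concluding $hg = 1$.
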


\begin{proof}  Let $\Gamma^\prime$ be a subgroup of $\Gamma$ generated by  
\[S= \left(\begin{matrix} 0  &-1 \\ 1 &0 \end{matrix} \right): z \to - \frac 1 z \quad \text{ and  } \quad  T=\left(\begin{matrix} 1 \quad 1 \\ 0 \quad 1 \end{matrix} \right): z \to z+1.\] 
We want to show that $\Gamma$ is a subgroup of $\Gamma^\prime$. Assume $g \in \Gamma$. Choose a point $z_1$ in the interior of $\F$, and let $z_2 =gz_1\in \H_2$. From the definition of the fundamental domain we have that there exists a $g^\prime \in \Gamma^\prime$ such that $g^\prime z_2   \in \F$.  But $z_1$ and $g^\prime z_2$ of $\F$ are  $\Gamma$-equivalent, and one of them is in the interior of $\F$, hence from Theorem~\ref{fund} this points coincide and $g^\prime g=1$. Thus,  $g \in \Gamma^\prime$. 
\end{proof}

Note that $S^2=1$, so $S$ has order 2, while $T^k= \left(\begin{matrix} 1 \quad k \\ 0 \quad 1 \end{matrix} \right)$ for any $k \in \Z$, so $T$ has infinite order. Figure~\ref{fig1} represents some transformations of $\F$ by some elements of $\Gamma$. 

\begin{figure}[!h]
\centering
\includegraphics[width=7cm, height=4cm]{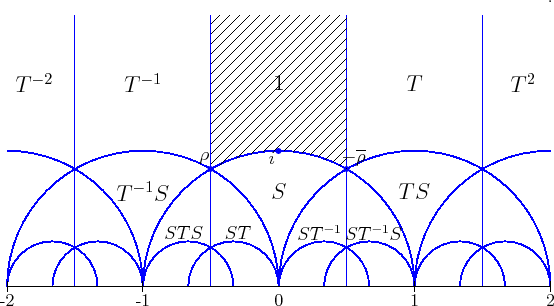}
\caption{The action of the modular group on the upper half plane.}
\label{fig1}
\end{figure}

For more details on the modular group and related arithmetic questions the reader can  see  \cite{serre} among others. 
\newpage
\section{The action of the modular group on the space of  positive definite binary quadratic forms}\label{modular-action-quadratic}

Let $Q(X, Z)= aX^2+ bXZ + cZ^2$ be a binary quadratic in $\R[X, Z]$. We will use the following  notation  to represent the binary quadratic, $Q(X, Z) = [a, b, c]$. The \textbf{discriminant} of $Q$ is $\D= b^2-4ac$ and  $Q(X, Z) $ is positive definite if $a>0$ and $\D <0$. Denote the set of positive definite binary quadratics with $BQF^+$, i.e.
\[BQF^+= \left \{ Q(X,Z) \in \R[X,Z] \, \left | \frac{}{}\right. \, Q(X,Z) \textit{ is positive definite }\right \}.\]
Let $SL_2(\R)$ act   as usual on the set of positive definite binary quadratic forms

 \[
\begin{split}
SL_2(\R)   \times BQF^+ &\to BQF^+\\
 \left(\begin{matrix} \a_1 \quad \a_2 \\ \a_3 \quad \a_4 \end{matrix} \right) \times  \left(\begin{matrix} X  \\ Z \end{matrix} \right)  &\to Q(\a_1X+\a_2Z,\a_3X+\a_4Z).
\end{split}
\]
We will denote this new form with $Q^M(X,Z) =a^\prime X^2+ b^\prime XZ+c^\prime Z^2$ where
\begin{equation}\label{Q^M}
\begin{split}
a^\prime &= a\a_1^2+ b\a_1\a_3+ c\a_3^2\\
b^\prime&=2(a\a_1\a_2 + c\a_3\a_4) + b(\a_1\a_4+ \a_2\a_3)\\\
c^\prime&=a\a_2^2+b\a_2\a_4+   c\a_4^2
\end{split}
\end{equation}
and 
\[\D^\prime=b^{\prime 2}-4a^\prime c^\prime=(\det M)^2 \D.   \]
Obviously, $\D$ is fixed under the $SL_2(\R)$ action and the leading coefficient of the new form $Q^M$ will be $Q^M(1, 0)= Q(a, c) >0$.   Hence,  $ BQF^+$ is preserved under this action. 

\subsection{The zero map}

Consider  the following map which is called the \textbf{zero map}
\begin{equation}\label{zero-map-real}
\begin{split}  
\z: BQF^+ &\to \H_2\\ 
[a, b, c] & \to \z(Q)  = \frac{-b + \sqrt \D}{2a} 
\end{split}
\end{equation}
where $Re(\z(Q)) =-\frac {b}{2a}$, and $\im (\z(Q)) =\frac{ \sqrt{|\D|}}{2a}$.   This map is a bijection since given $z=x+iy$, we can find $a, b, c$ such that $Q(X,Z)$ is positive definite  given as   $ [1, -2x, x^2+y^2]$. 
 
\begin{rem} Note that this map gives us   a one to one correspondence between positive definite quadratic forms and points in $\H_2$. 
\end{rem} 
 
\begin{defn} Let $G$ be a group and $X, Y$ two $G$-sets. A function $f: X \to Y$ is said to be \textbf{$G$-equivariant} if $f(gx)=g f(x)$, for all $g\in G$ and all $x\in X$.  This can be illustrated with the following diagram. 
\[
\begin{array}[c]{ccc}
X&\stackrel{g}{\longrightarrow}&X\\
\downarrow\scriptstyle{f}&&\downarrow\scriptstyle{f}\\
Y&\stackrel{g}{\longrightarrow}&Y
\end{array}
\]
Note that if one or both of the actions are right actions the $G$-equivariant condition must be suitably modified 
\[
\begin{split}
f(x\cdot g) &= f(x)\cdot g, \qquad \text{(right-right)}\\
f(x\cdot g) &= g^{-1} \cdot f(x), \quad \text{(right-left)}\\
f(g\cdot x) &= f(x)\cdot g^{-1},  \quad \text{(left-right)}
\end{split}
\]
\end{defn}
Let $\Gamma$ be the modular group acting on $\H_2$, and on $BQF^+$ as described above.  Then, the following theorem is true.

\begin{lem}\label{equivariant-real}
The root map $ \z:BQF^+ \to \H_2$ is a $\Gamma$-equivariant  map. In other words, $\z(Q^M)=M^{-1}\z(Q)$.
\end{lem}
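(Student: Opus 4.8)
The plan is to exploit the fact that $\z(Q)$ is, as the alternate name ``root map'' indicates, nothing but the unique root of the dehomogenized polynomial $Q(z,1)=az^2+bz+c$ that lies in $\H_2$. Indeed, for $Q=[a,b,c]$ positive definite the two roots of $az^2+bz+c$ are $\frac{-b\pm\sqrt\D}{2a}$, and since $\D<0$ and $a>0$ exactly one of them, namely $\z(Q)=\frac{-b+\sqrt\D}{2a}$, has positive imaginary part. Once this characterization is in place, proving $\z(Q^M)=M^{-1}\z(Q)$ reduces to tracking how the root of $Q(\cdot,1)$ transforms when $Q$ is replaced by $Q^M$, rather than manipulating the explicit coefficient formulas~\eqref{Q^M}.

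First I would dehomogenize the defining identity of the action. Writing $M=\left(\begin{matrix}\a_1 & \a_2\\ \a_3 & \a_4\end{matrix}\right)$, we have $Q^M(z,1)=Q(\a_1 z+\a_2,\ \a_3 z+\a_4)$. For $z\in\H_2$ the quantity $\a_3 z+\a_4$ is nonzero (if $\a_3\neq0$ its imaginary part is $\a_3\im z\neq0$, while if $\a_3=0$ then $\det M=\a_1\a_4=1$ forces $\a_4\neq0$), so homogeneity of degree two lets me factor
\[
Q^M(z,1)=(\a_3 z+\a_4)^2\, Q\!\left(\frac{\a_1 z+\a_2}{\a_3 z+\a_4},\,1\right)=(\a_3 z+\a_4)^2\, Q(Mz,1),
\]
where $Mz$ denotes the linear fractional image from~\eqref{action-over-C}. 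Hence, for $z\in\H_2$, we have $Q^M(z,1)=0$ if and only if $Q(Mz,1)=0$.

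Finally I would apply this to $z=\z(Q^M)$. Since $\D^\prime=(\det M)^2\D=\D<0$ and the leading coefficient of $Q^M$ is positive, $Q^M$ is again positive definite, so $\z(Q^M)\in\H_2$ is a genuine root of $Q^M(\cdot,1)$. By the displayed equivalence, $M\z(Q^M)$ is then a root of $Q(\cdot,1)$, and because $M\in SL_2(\R)$ preserves $\H_2$ (established earlier) we have $M\z(Q^M)\in\H_2$; hence it must be the \emph{unique} upper half-plane root of $Q$, i.e. $M\z(Q^M)=\z(Q)$. Applying $M^{-1}$ yields $\z(Q^M)=M^{-1}\z(Q)$, as claimed.

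The step I expect to be the main obstacle is the correct bookkeeping of which root is selected, together with explaining why the inverse $M^{-1}$ appears: the substitution action on forms behaves as a right action while the M\"obius action on $\H_2$ is a left action, so the equivariance is exactly of the ``right--left'' type $f(Q^M)=M^{-1}f(Q)$ from the preceding definition. The homogeneity and nonvanishing argument and the positive-definiteness of $Q^M$ are routine once phrased carefully, and I would deliberately avoid the brute-force route of substituting~\eqref{Q^M} into $\frac{-b^\prime+\sqrt{\D^\prime}}{2a^\prime}$ and simplifying, which obscures the underlying structure.
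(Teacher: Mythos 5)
Your proof is correct, and it takes a genuinely different route from the paper's. The paper argues by direct computation: it substitutes the coefficient formulas~\eqref{Q^M} into $\z(Q^M)$, rationalizes $M^{-1}\z(Q)$, and then verifies that the two expressions agree for the two generators $S$ and $T$ of $\Gamma$, implicitly relying on the fact that equivariance checked on generators propagates to the whole group through compatibility of the two actions with composition. You instead characterize $\z(Q)$ intrinsically as the unique root of the dehomogenized form $Q(\cdot,1)$ lying in $\H_2$, and use degree-two homogeneity to get $Q^M(z,1)=(\a_3 z+\a_4)^2\,Q(Mz,1)$, so that roots of $Q^M(\cdot,1)$ in $\H_2$ are carried by the M\"obius action to roots of $Q(\cdot,1)$; since $SL_2(\R)$ preserves $\H_2$ and the upper half-plane root is unique, this forces $M\z(Q^M)=\z(Q)$. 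What your approach buys: it handles every $M\in SL_2(\R)$ at once (not just $\Gamma$, let alone only its generators), it requires no coefficient manipulation, and it makes transparent why $M^{-1}$ appears (the substitution action on forms is a right action, the M\"obius action a left action, so the equivariance is of right--left type). What the paper's approach buys: it is elementary, formula-level verification with no appeal to roots or half-plane considerations --- though as written it is complete only modulo the unstated reduction from general $M$ to generators, a gap your argument does not have.
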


\begin{proof}

Let $Q(X, Z)= aX^2+ bX + cZ$ with discriminant $\D$, and 
\[M= \left(\begin{matrix} \a_1 \quad \a_2 \\ \a_3 \quad \a_4 \end{matrix} \right) \in \Gamma\]
acting on it.  We want to show that $\z(Q^M)=M^{-1}\z(Q)$. We will prove the equivariance property only  for  the generators of $\Gamma$. From the root map,  equations~\eqref{Q^M}, and  using the fact that the discriminant is fixed  we have
\[
\begin{split}
\z(Q^M) &= \frac{-b^\prime +\sqrt{\D}}{2a^\prime} =\frac{ - (  2(a\a_1\a_2 + c\a_3\a_4) +  b(\a_1\a_4+ \a_2\a_3))+\sqrt{\D} }{2( a\a_1^2+ b\a_1\a_3+ c\a_3^2)}
\end{split}
\]
On the other side $M^{-1} \z(Q)$ is as follows
\[
\begin{split}
M^{-1}\z(Q)&= \left(\begin{matrix} \a_4 \quad - \a_2 \\- \a_3 \quad \a_1 \end{matrix} \right) \z(Q) =   \frac{\a_4 \z(Q) - \a_2}{  \a_1 -\a_3 \z(Q)} =\frac{\a_4  \frac{-b+ \sqrt \D}{2a}  - \a_2}{  \a_1 -\a_3  \frac{-b + \sqrt \D}{2a} } \\ 
&=\frac{\a_4  ( \sqrt \D -b)- 2a \a_2}{ 2a \a_1 -\a_3 ( \sqrt \D -b )} = \frac{  \a_4\sqrt \D -(2a\a_2 +b\a_4)}{(2a\a_1 +b\a_3) -\a_3 \sqrt \D}\\
&= \frac{[ \a_4\sqrt \D -(2a\a_2 +b\a_4)][(2a\a_1 +b\a_3) +\a_3 \sqrt \D]}{(2a\a_1 +b\a_3)^2 -\a_3^2 \D}\\
&= \frac{-4a^2\a_1\a_2-2ab\a_2\a_3-2ab\a_1\a_4 - b^2\a_3\a_4-2a\a_2\a_3\sqrt\D  }{(2a\a_1 +b\a_3)^2 -\a_3^2 \D}+ \\
&  \qquad + \frac{-b\a_3\a_4\sqrt\D + 2a\a_1\a_4\sqrt \D +b\a_3\a_4\sqrt\D +\a_3\a_4\D}{(2a\a_1 +b\a_3)^2 -\a_3^2 \D}
\end{split}
\]
If we let $M=T = \left(\begin{matrix} 1 & 1\\  0 & 1  \end{matrix}\right)$ we get $\z(Q^M)= M^{-1}\z(Q)= (-(2a+b)+\sqrt \D)/2a$,  and if we let $M$ equal the other generator of $\Gamma$, i.e  $M= S=\left(\begin{matrix} 0  &-1 \\ 1 &0 \end{matrix} \right)$, we get  $\z(Q^M)= M^{-1}\z(Q)= (b +\sqrt \D)/2c$. This completes the proof. 
\end{proof}

Note that the root $\z(Q)$ in the upper half-plane transforms via $M^{-1}$ into $(\a_4\z(Q)-\a_2)/ (\a_1-\a_3\z(Q))$, which is a also in the upper half-plane, because
\[\im (M^{-1} (\z(Q)))= \det(M^{-1}) \cdot  \frac{\im (\z(Q))}{  |\a_1-a_3\z(Q)|^2}.\]
%

\section{Reduction of positive definite quadratics}\label{reduction-quadratic} 

In this section we will define a reduced positive definite binary quadratic form, then we will give a reduction algorithm and at the end we will give an algorithm for counting reduced positive definite binary quadratic forms with a given discriminant. 

We denoted with $BQF^+$  the set of positive definite quadratics and we have defined an equivalence relation in this set.   Define $Q=[a, b, c]$ to be \textbf{reduced} if $\z(Q) \in \F$.  

The following theorem gives an arithmetic condition on the coefficients of a reduced positive definite binary quadratic. 

\begin{prop}\label{red-quad}
A positive definite quadratic form $Q \in BQF^+$ is  reduced if and only if $|b| \leq a \leq c$. 
\end{prop}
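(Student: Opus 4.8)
The plan is to unwind the definition of \emph{reduced}, namely $\z(Q)\in\F$, into the two defining inequalities of the fundamental domain and then read each of them off as a condition on $a,b,c$. Since $Q=[a,b,c]$ is positive definite we have $a>0$ and $\D=b^2-4ac<0$, so $\sqrt{\D}=i\sqrt{|\D|}$ and the zero map gives
\[
\z(Q)=\frac{-b+\sqrt{\D}}{2a}=-\frac{b}{2a}+i\,\frac{\sqrt{|\D|}}{2a},
\]
which indeed lies in $\H_2$ because $a>0$. Thus $\re\z(Q)=-b/(2a)$ and $\im\z(Q)=\sqrt{|\D|}/(2a)$, exactly as recorded just after~\eqref{zero-map-real}.

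Recall that $\F=\{z\in\H_2: |z|\ge 1 \text{ and } |\re z|\le 1/2\}$, so I would verify the two conditions separately. First, $|\re\z(Q)|\le 1/2$ reads $|b|/(2a)\le 1/2$; since $a>0$ this is equivalent to $|b|\le a$. Second, for the modulus I would compute
\[
|\z(Q)|^2=\left(\frac{b}{2a}\right)^2+\frac{|\D|}{4a^2}=\frac{b^2+|\D|}{4a^2}.
\]
Here is the one computation that makes the statement work: because $\D<0$ we have $|\D|=-\D=4ac-b^2$, hence $b^2+|\D|=4ac$ and so $|\z(Q)|^2=c/a$. Consequently $|\z(Q)|\ge 1$ is equivalent to $c/a\ge 1$, i.e.\ $a\le c$ (again using $a>0$).

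Combining the two equivalences, $\z(Q)\in\F$ holds precisely when $|b|\le a$ and $a\le c$, that is, $|b|\le a\le c$, which is the claim. I expect no genuine obstacle here: the argument is a direct calculation, and the only points requiring care are the sign bookkeeping (using $a>0$ to divide through the inequalities without reversing them) and the simplification $b^2+|\D|=4ac$, which is precisely what collapses the modulus to the clean ratio $c/a$ and ties the condition $|z|\ge 1$ to $a\le c$.
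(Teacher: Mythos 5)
Your proof is correct and follows essentially the same route as the paper's: unwind $\z(Q)\in\F$ into the two conditions $|\re\z(Q)|\le 1/2$ and $|\z(Q)|\ge 1$, and translate them via $\re\z(Q)=-b/(2a)$ and $|\z(Q)|^2=(b^2-\D)/(4a^2)=c/a$ into $|b|\le a$ and $a\le c$. If anything, your version is slightly cleaner, since you phrase each step as an equivalence (giving both directions of the "if and only if"), whereas the paper writes only the forward implication and leaves the reversibility implicit.
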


\begin{proof}  
Let $Q$ be a positive definite quadratic form with coefficients $[a,b,c]$. From the root map $ \z(Q)=\frac{-b +\sqrt \D}{2a}$.  By assumption  $\z(Q) \in \F$, i.e  
\[\frac{-1} 2 \leq Re(\z(Q)) \leq \frac 1 2 \quad \text{and} \quad |\z(Q)| \geq 1\]
Since, $\frac{-1} 2 \leq Re(\z(Q)) \leq \frac 1 2$ we have that   $\frac{-1} 2 \leq \frac{-b}{2a} \leq \frac 1 2$. Hence, $|b| \leq a$.  On the other side  since  $|\z(Q)| \geq 1$ we have
\[ 1 \leq  |\z(Q)| = \z(Q) \cdot \overline{\z(Q)}= \frac{(-b+\sqrt{\D})(-b-\sqrt{\D})}{2a \cdot 2a}=\frac{b^2-\D}{4a^2}= \frac{4ac}{4a^2}= \frac c a\]
Therefore, $|b| \leq a \leq c$
\end{proof}

The theorems that we will see for the rest of this section give us a reduction algorithm for positive definite binary quadratic forms, and they  will also be very  useful   for counting reduced forms with given discriminant $\D$. 

\begin{thm}\label{bound-b}
i) Let $Q$ be a reduced form with fixed discriminant $\D=-D$. Then, $b \leq \sqrt{D/3}$.

ii) The number of reduced forms of a fixed discriminant  $\D=-D$  is finite.
\end{thm}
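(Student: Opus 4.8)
The plan is to reduce everything to the arithmetic characterization of reducedness already established in Proposition~\ref{red-quad}, namely that $Q=[a,b,c]$ is reduced precisely when $|b| \le a \le c$. Writing the discriminant as $\D = b^2 - 4ac = -D$, i.e.\ $D = 4ac - b^2$, both parts become elementary consequences of these inequalities together with the positivity of $a$ and $c$ (which holds since $Q \in BQF^+$).

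For part (i), I would first observe that $|b| \le a \le c$ gives $b^2 \le a^2 \le ac$, so that $b^2 \le ac$. Substituting into $D = 4ac - b^2$ yields $D = 4ac - b^2 \ge 4b^2 - b^2 = 3b^2$, whence $b^2 \le D/3$ and therefore $b \le |b| \le \sqrt{D/3}$, which is the claimed bound.

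For part (ii), the same inequalities give $ac \ge a^2$ (from $a \le c$) and $b^2 \le a^2$ (from $|b| \le a$), so $D = 4ac - b^2 \ge 4a^2 - a^2 = 3a^2$; hence $a \le \sqrt{D/3}$ and, since $|b| \le a$, also $|b| \le \sqrt{D/3}$. Thus the leading coefficient $a$ and the middle coefficient $b$ each range over only finitely many values. Finally, once $a$ and $b$ are fixed the relation $D = 4ac - b^2$ determines $c = (b^2 + D)/(4a)$ uniquely, so each admissible pair $(a,b)$ gives at most one reduced form with discriminant $-D$, and there are only finitely many such forms.

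The content here is entirely contained in assembling these inequalities, so I do not expect a genuine obstacle; the one point that must be made explicit is that finiteness in (ii) relies on the coefficients $a,b,c$ being integers, as is implicit in the problem of counting forms of a fixed discriminant. Over $\R$ the value of $c$ determined by $(a,b)$ varies continuously and the reduced family is a continuum, so the integrality of $[a,b,c]$ is exactly what converts the bounded range of $(a,b)$ into a finite set.
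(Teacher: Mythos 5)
Your proof is correct and follows essentially the same route as the paper: both deduce $3b^2 \leq D$ from the inequalities $|b| \leq a \leq c$ of Proposition~\ref{red-quad}, and both obtain finiteness from the resulting bounded range of integer coefficients. The only cosmetic difference is in part (ii), where you bound $a$ by $\sqrt{D/3}$ and solve $c = (b^2+D)/(4a)$ uniquely, while the paper instead counts the finitely many factorizations of $b^2+D$ as $4ac$; your explicit remark that integrality is what turns the bounds into finiteness is a point the paper leaves implicit.
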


\begin{proof} 

i) $Q$ is a positive definite binary quadratic with  fixed discriminant.  Since $Q$  is reduced from Proposition \ref{red-quad} we have that $|b| \leq a \leq c$.   Hence, 
\[4b^2 \leq 4ac = b^2 + D \]
i.e, $3b^2 \leq D$ and $b \leq \sqrt{D/3}$.

ii) From part i) there are only  finitely many possible $b$'s  and each of them determines a finite set of factorings $b^2 + D$ into $4ac$.  Hence, there are only finitely many candidates for reduced forms of fixed discriminant. 
\end{proof}

\begin{thm}
Every positive definite quadratic form $Q$ with fixed discriminant  is equivalent to a reduced form of the same discriminant. 
\end{thm}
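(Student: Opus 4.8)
The plan is to transport the problem from the level of forms to the level of points in $\H_2$ via the zero map, solve it there using the fundamental domain, and then transport the solution back. Concretely, given a positive definite quadratic $Q=[a,b,c]$ with discriminant $\D$, I would first form its image $z=\z(Q)\in\H_2$, which is well-defined since $\z$ is a bijection onto $\H_2$ by \eqref{zero-map-real}.

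The second step invokes Theorem~\ref{fund}(i): every point of $\H_2$ is $\Gamma$-equivalent to a point of $\F$, so there exists some $g\in\Gamma$ with $gz\in\F$. I now want a form equivalent to $Q$ whose image under $\z$ is exactly this point $gz$. Here the equivariance of Lemma~\ref{equivariant-real}, namely $\z(Q^M)=M^{-1}\z(Q)$, is the crucial tool, and the one place where care is needed: because the zero map intertwines the action on forms with the \emph{inverse} action on $\H_2$, to achieve $\z(Q^M)=g\,\z(Q)$ I must choose $M=g^{-1}\in\Gamma$ rather than $M=g$.

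With this choice, set $Q'=Q^M$. By construction $Q'$ is $\Gamma$-equivalent to $Q$, and since a representative of $M$ in $SL_2(\Z)$ has determinant $1$, the relation $\D'=(\det M)^2\D$ gives $\D'=\D$; moreover $Q'$ remains positive definite, as $BQF^+$ is preserved under the action discussed in Section~\ref{modular-action-quadratic}. Finally $\z(Q')=M^{-1}\z(Q)=g\,\z(Q)\in\F$, so by Proposition~\ref{red-quad} the form $Q'$ is reduced. This exhibits the required reduced form equivalent to $Q$ and of the same discriminant.

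I do not anticipate a deep obstacle, since the real content has already been isolated in Theorem~\ref{fund} and Lemma~\ref{equivariant-real}; the argument is essentially a dictionary translation through the $\Gamma$-equivariant bijection $\z$. The only genuine pitfall is the bookkeeping around the inverse in the equivariance relation: a single inversion error there would produce a form whose image lands in $g^{-1}\F$ instead of $\F$. A secondary point worth one sentence is confirming that restricting the ambient $SL_2(\R)$-action to $\Gamma$ preserves both positive-definiteness and the discriminant \emph{exactly}, which holds because elements of $\Gamma$ have determinant $1$ (and $\pm I$ act trivially on even-degree forms).
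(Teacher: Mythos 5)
Your proof is correct, but it is not the paper's proof. The paper argues directly on the coefficients: if $Q=[a,b,c]$ is not reduced, it applies the translation given by $\left(\begin{smallmatrix}1 & \d \\ 0 & 1\end{smallmatrix}\right)$ with $\d$ the nearest integer to $-\tfrac{b}{2a}$, replacing $[a,b,c]$ by $[a,\, b+2a\d,\, a\d^2+b\d+c]$, and then, if the outer coefficients are out of order, applies $S$ to replace $[a,b,c]$ by $[c,-b,a]$; iterating, the leading coefficient strictly decreases through positive integers, so the process terminates at a reduced form, and each step visibly preserves the discriminant. Your route instead transports the problem to $\H_2$: you use the bijectivity of the zero map, Theorem~\ref{fund}(i) to produce $g\in\Gamma$ with $g\,\z(Q)\in\F$, and the equivariance $\z(Q^M)=M^{-1}\z(Q)$ of Lemma~\ref{equivariant-real} with the correct choice $M=g^{-1}$; your handling of the inverse is exactly the point where a slip would have been fatal, and you got it right, as well as the determinant and positive-definiteness bookkeeping. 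The trade-off between the two: the paper's argument is self-contained and constructive --- it \emph{is} the reduction algorithm reused later for counting forms of fixed discriminant and as the template for the degree-$n$ procedure of Part 3 --- while yours is shorter and more conceptual given the machinery already proved, and it generalizes verbatim to any setting with an equivariant bijection onto a space carrying a known fundamental domain (precisely the viewpoint Parts 2 and 3 adopt). Two small remarks: your final appeal to Proposition~\ref{red-quad} is unnecessary, since reduced is \emph{defined} by $\z(Q')\in\F$, so the arithmetic criterion $|b|\leq a\leq c$ plays no role; and strictly speaking the paper verifies Lemma~\ref{equivariant-real} only on the generators $S,T$, so your argument implicitly uses that equivariance propagates to all of $\Gamma$ via $(Q^M)^N=Q^{MN}$ --- routine, but worth a line.
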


\begin{proof}
Let $Q=[a, b, c]$ be a positive definite binary quadratic form with discriminant $\D$.  If this form is not reduced then  choose an integer $\d$  such that $|b +2c\d| \leq a$ ( choose $\d$ be the nearest integer to $- \frac{b}{2a}$)  and replace $[a, b, c]$ with $[a^\prime, b^\prime, c^\prime]=[a,b +2a\d,  a\d^2 + b \d +c]$.  The reduction transformation in this case is given by the  matrix 
\[ \left(\begin{matrix} 1 &\, \,  \d  \\  0  &\, \, 1  \end{matrix}\right)\]
which gives us $[a, b, c] \sim [a, b+2a\d, a\d^2 + b \d +c]$.

Then, if $c^\prime < a^\prime$ replace $[a, b, c]$ by  $[a^\prime, b^\prime, c^\prime]=[c, -b, a]$.  Since $a$,$c$  are positive integers  the process will terminate giving us the desired reduced form.

\end{proof}

With the exception of $[a, b, a] \sim [a, -b, a]$, and $[a, a, c] \sim [a, -a, c]$ no distinct reduced forms are equivalent. The proof is not difficult and can be found in \cite[pg. 15 ]{binary-quadratic}.  If we choose the reduced form to be the one that has a non-negative center coefficient then the following theorem hold. The proof is obvious from previous  theorems.  The interested reader can check \cite{binary-quadratic} for details. 

\begin{thm}\label{finite-discriminant}

i) Every form of discriminant $\D \leq 0$ is equivalent to a unique  reduced form. 
 
ii)  The number of reduced binary quadratic forms  for a given discriminant $\D$ is finite.
\end{thm}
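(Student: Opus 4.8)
The plan is to reduce both parts to facts already in hand. Part (ii) is essentially Theorem~\ref{bound-b}(ii): by Proposition~\ref{red-quad} a reduced form satisfies $|b| \le a \le c$, so with $\D = b^2 - 4ac$ fixed one has $3b^2 \le -\D$, leaving only finitely many choices for $b$; for each such $b$ the equation $4ac = b^2 - \D$ admits finitely many factorizations with $0 < a \le c$, whence finitely many reduced forms. (The degenerate case $\D = 0$, where the form is only semidefinite, is handled separately.)

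For part (i), existence is exactly the preceding theorem, which sends any positive definite $Q$ to a reduced form of the same discriminant. The substance is uniqueness, and the cleanest route is through the zero map. By Lemma~\ref{equivariant-real} the map $\z : BQF^+ \to \H_2$ is $\Gamma$-equivariant, so two forms $Q, Q'$ are $\Gamma$-equivalent if and only if $\z(Q)$ and $\z(Q')$ are $\Gamma$-equivalent points of $\H_2$, and $Q$ is reduced precisely when $\z(Q) \in \F$. Thus I must show that two equivalent reduced forms have equal images except in the boundary cases, and that the sign normalization then pins down a single representative.

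Here Theorem~\ref{fund}(ii) carries the argument: if $\z(Q)$ and $\z(Q')$ are distinct $\Gamma$-equivalent points of $\F$, then either $Re = \pm 1/2$ with the two points differing by $1$, or $|z| = 1$ with the two points interchanged by $z \mapsto -1/z$. Translating through $Re(\z(Q)) = -b/(2a)$ and $|\z(Q)|^2 = c/a$, the first alternative forces $|b| = a$ and yields exactly $[a,a,c] \sim [a,-a,c]$, while the second forces $a = c$ and yields $[a,b,a] \sim [a,-b,a]$ --- precisely the two identifications recorded in the remark preceding the theorem. In each case the two reduced forms differ only by the sign of $b$, so the convention "take $b \ge 0$'' selects one of them; the corner points $z = \rho$ and $z = -\bar\rho$, where both conditions hold at once, both give the single normalized form $[a,a,a]$. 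Combined with Corollary~\ref{fund-cor} (injectivity of $\F \to \H_2/\Gamma$ on the interior), this establishes the unique reduced representative.

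The main obstacle is the uniqueness step, and specifically keeping the boundary bookkeeping honest: I must verify that the two coefficient-level identifications are genuinely all that can occur, and that the single rule $b \ge 0$ simultaneously resolves both the $|b| = a$ and $a = c$ ambiguities --- including their overlap at $\rho$ --- without introducing a new clash. Once this dictionary between the $\F$-boundary relations of Theorem~\ref{fund}(ii) and the coefficient relations is checked, the result is immediate from the $\Gamma$-equivariance of $\z$.
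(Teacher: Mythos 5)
Your proof is correct, and structurally it follows the same skeleton the paper intends: finiteness from Theorem~\ref{bound-b}, existence from the preceding reduction theorem, and uniqueness from the fact that the only coincidences among reduced forms are $[a,b,a]\sim[a,-b,a]$ and $[a,a,c]\sim[a,-a,c]$, which the sign convention $b\ge 0$ then removes. The difference is where that last fact comes from. The paper does not prove it: its entire argument is the sentence ``the proof is obvious from previous theorems,'' resting on the remark just before the statement, whose own proof is delegated to \cite{binary-quadratic}. You instead derive the two identifications internally, from Theorem~\ref{fund}(ii) together with the $\Gamma$-equivariance of the zero map (Lemma~\ref{equivariant-real}): since $\re(\z(Q))=-b/(2a)$, the alternative $Re(z_1)=\pm 1/2$, $z_1=z_2\pm 1$ translates into $|b|=a$ and $b'=-b$ (hence $c'=c$, the discriminant being preserved), and since $|\z(Q)|^2=c/a$, the alternative $|z_1|=1$, $z_2=-1/z_1$ translates into $a=c$ and $b'=-b$; the corner $\rho$ is where both occur, and both normalize to $[a,a,a]$. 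That is exactly the dictionary the paper's phrase ``obvious from previous theorems'' gestures at, so your write-up is a self-contained version of the intended proof rather than a genuinely different one --- and it is the more valuable text, since it replaces an external citation by an argument using only the paper's own results.

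One point to tighten: your biconditional ``$Q\sim Q'$ if and only if $\z(Q)$, $\z(Q')$ are $\Gamma$-equivalent'' needs injectivity of $\z$, and $\z$ is not injective on all of $BQF^+$: positive scalar multiples $[\lambda a,\lambda b,\lambda c]$ have the same root, so the paper's claim that $\z$ is a bijection is itself loose. This costs nothing in your argument, but it deserves a sentence: $\Gamma$-equivalent forms have equal discriminants, and for a fixed discriminant $\D<0$ the root does determine the form ($a=\sqrt{|\D|}/(2\im(\z(Q)))$, then $b=-2a\,\re(\z(Q))$, then $c$ from $\D$), so on each fixed-discriminant slice $\z$ is injective and your ``if'' direction is sound. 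With that seam closed, everything checks out, including your parenthetical separate treatment of $\D=0$, which the statement's hypothesis $\D\le 0$ silently includes even though such forms are not positive definite.
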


We want to consider the  connection between the concept of a reduced form and the height of the $SL_2(\Z)$-equivalence class $[f]$ of a binary form $f$. Let us first recall the definition of the height as in \cite{nato-8}.

Let $f(X, Z) = aX^2+bXZ+cZ^2$ be a positive definite binary quadratic  form defined over $\R$.  From ~\cite{nato-8}, the height of   $f=[a, b, c]$   is  $H(f) = \max \{ |a|, |b|, |c| \}$.  If we consider $SL_2(\Z)$ acting on $BQF(\R)^+$  then in \cite{nato-8} we proved that there are only finitely many $f^\prime \in Orb(f)$ such that $H(f^\prime) \leq H(f)$ and we defined the height of the binary form to be 
\[ \tilde H(f) := \min \left\{ \frac{}{} H(f^\prime) | f^\prime \in  Orb(f), \, H(f^\prime) \leq H(f) \right\}.\]
%
Then, the following theorem holds.

\begin{thm} 
Let  $f(X, Z) = aX^2+bXZ+cZ^2$ be reduced (i.e., $|b| < a < c$).  Then,  $H( [f]) =c$.
\end{thm}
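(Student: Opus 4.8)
The plan is to prove the statement in two halves: that $f$ itself realizes height $c$, and that no form in its $SL_2(\Z)$-orbit has smaller height. Since $f$ is reduced with $|b|<a<c$, the coefficient of largest absolute value is $c$, so $H(f)=\max\{|a|,|b|,|c|\}=c$; in particular $\tilde H(f)\le c$. Everything therefore reduces to the lower bound $H(f')\ge c$ for every $f'\in \mathrm{Orb}(f)$, after which the minimum defining the class height $H([f])=\tilde H(f)$ is forced to equal $c$.

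The core is an estimate on the values of $Q(X,Z)=aX^2+bXZ+cZ^2$ at integer points. I would first show that $Q(x,y)\ge c$ for all integers $(x,y)$ with $y\ne 0$. Using $|b|\le a\le c$ I would bound
\[ Q(x,y)\ \ge\ a\,x^2-a\,|x|\,|y|+c\,y^2\ =\ a\bigl(x^2-|x|\,|y|+y^2\bigr)+(c-a)\,y^2, \]
and then observe that $x^2-|x|\,|y|+y^2$ is a positive integer (it is the value of a form of discriminant $-3$, hence $\ge 1$ on $\Z^2\setminus\{0\}$), while $(c-a)y^2\ge c-a$ whenever $y\ne 0$. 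Combining gives $Q(x,y)\ge a+(c-a)=c$, and $Q(x,0)=ax^2\ge a$ is immediate. Thus $a$ is the least value of $Q$ at a nonzero integer vector (attained at $(1,0)$) and $c$ is the least value at a vector with nonzero second coordinate (attained at $(0,1)$).

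Next I would convert this into a statement about equivalent forms. For $M=\left(\begin{smallmatrix}\a_1&\a_2\\\a_3&\a_4\end{smallmatrix}\right)\in SL_2(\Z)$, the formulas~\eqref{Q^M} give $a'=Q(\a_1,\a_3)$ and $c'=Q(\a_2,\a_4)$, so the outer coefficients of $f^M$ are precisely the values of $Q$ at the two columns of $M$. Suppose, for contradiction, that $H(f^M)<c$. As $f^M$ is positive definite, $a',c'>0$, so this forces $a'<c$ and $c'<c$. By the estimate above, $Q(\a_1,\a_3)<c$ requires $\a_3=0$ and $Q(\a_2,\a_4)<c$ requires $\a_4=0$, whence $\det M=\a_1\a_4-\a_2\a_3=0$, contradicting $\det M=1$. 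Hence $\max\{a',c'\}\ge c$ and $H(f^M)\ge \max\{a',c'\}\ge c$ for every $M$. Together with $H(f)=c$ this yields $\tilde H(f)=c$.

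The step I expect to be the crux is the evaluation estimate $Q(x,y)\ge c$ for $y\ne 0$: it is what identifies $c$ as the second successive minimum of the form and makes the determinant obstruction in the last step go through. The remainder is routine bookkeeping with the action formulas~\eqref{Q^M} and the positivity of the coefficients.
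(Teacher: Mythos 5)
Your proof is correct, and it takes a genuinely different route from the paper's --- in fact it proves strictly more. The paper's own argument only computes $f^S$ and $f^T$ for the two generators $S$ and $T$ of $SL_2(\Z)$, obtaining $[c,-b,a]$ and $[a,2a+b,a+b+c]$, and declares the result obvious; but this verification does not extend to arbitrary $M$ by writing $M$ as a word in $S$ and $T$, because after applying one generator the intermediate form need not be reduced, so the generator computation cannot be iterated. Your argument closes exactly this gap: the value estimate $Q(x,y)\ge c$ for all integers $(x,y)$ with $y\ne 0$ (together with $Q(x,0)=ax^2\ge a$) identifies $a$ and $c$ as the first two successive minima of the reduced form, and since the formulas in Eq.~\eqref{Q^M} show the outer coefficients of $f^M$ are the values of $Q$ at the two columns of $M$, the condition $\det M=1$ forbids both columns from having vanishing second entry; hence $\max\{a',c'\}\ge c$ uniformly over the whole orbit. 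This is the classical Lagrange--Gauss argument that a reduced form realizes its minima at $(1,0)$ and $(0,1)$, and each ingredient you use is sound: $x^2-|x|\,|y|+y^2$ is indeed a positive-definite integer-valued form of discriminant $-3$, so it is $\ge 1$ on $\Z^2\setminus\{0\}$, and positivity of $a',c'$ follows from positive definiteness of $f^M$. What the paper's computation buys is only a quick check on generators; what yours buys is an actual proof valid for every $M\in SL_2(\Z)$. A small bonus: your estimate uses only the non-strict inequalities $|b|\le a\le c$, so the conclusion $H([f])=c$ holds for reduced forms on the boundary of the fundamental domain as well, not just under the strict hypothesis $|b|<a<c$ stated in the theorem.
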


\begin{proof} 
We want to show that  given any  $M=\left(  \begin{matrix}   \a_1 & \, \a_2 \\ \a_3 & \, a_4 \end{matrix}  \right) \in SL_2( \Z)$  acting on $f(X,Z)$ we have that  $\max \{ |a_1|, |b_1|, |c_1| \} \geq c$, where $a_1, b_1, c_1$ are the coefficients of the new form $f^M$.  From~\eqref{Q^M} we have
\[
\begin{split}
a_1 &= a\a_1^2+ b\a_1\a_3+ c\a_3^2\\
b_1 &=2(a\a_1\a_2 + c\a_3\a_4) + b(\a_1\a_4+ \a_2\a_3)\\\
c_1 &=a\a_2^2+b\a_2\a_4+   c\a_4^2
\end{split}
\]

We will  prove it only for the generators of $SL_2( \Z)$, $S= \left(\begin{matrix} 0  &-1 \\ 1 &0 \end{matrix} \right)$  and     $ T=\left(\begin{matrix} 1 \quad 1 \\ 0 \quad 1 \end{matrix} \right)$.  First, let  $M=S$,  then we have $[a_1, b_1, c_1] = [c, -b, a]$  and if $M= T$ then  $[a_1, b_1, c_1] = [a, 2a+ b, a+b+c]$ and the result  is obvious. 

\end{proof}

\subsection{Counting binary quadratic forms with fixed discriminant}\label{quadratic-minimal-disc}

In Theorem~\ref{finite-discriminant}  we prove that  for a fixed   discriminant  $\Delta \leq 0$ there are finitely many reduced forms with discriminant $\Delta$. In this section we give an algorithm  to list such reduced forms with given discriminant.

\begin{alg}

\textbf{Input:} A binary quadratic form $F(X,Z) = aX^2 +bXZ+cZ^2$, where $a,b \in \Z$.  

\textbf{Output:} A binary quadratic form  $G$ equivalent to $F$, such that $G$ has    minimum height.\\

\textbf{Step 1:} Compute $\D_F=b^2-4ac$ for given $F(X, Z)$


\textbf{Step 2:}   Choose  $b$ such that  $b \leq \sqrt{\D/3}$.

\textbf{Step 3:}   For each picked $b$ find $a, c \in \Z$ such that 

\[ ac = \frac1 4(b^2- \D) \text{ and  } |b| \leq a \leq c.  \]

\textbf{Step 4:} Return the reduced forms  $[a, b, c]$.

\end{alg}

In Table~\ref{tab-1} we are listing (counting the number of) reduced forms with fixed discriminant $\D \equiv 1 \mod 4 $, $\Delta \leq 0$.   Note that $n$ represents the number of reduced forms with discriminant $\D$.   

From the equivalence classes of reduced quadratics there is one which has the smallest height. We call this class the special class and the corresponding height the minimal absolute height. For a generalization of this to degree $n$ binary forms see   \cite{nato-8}.

\begin{small}
\begin{table}[h]  \label{tab-1}
\centering
\begin{tabular}{|c| c| c|} 
\hline
$\Delta$  & Reduced form representative of classes  & n \\
\hline
-3 & [1, 1, 1] & 1\\
\hline
-7 & [1, 1, 2 ]  & 1 \\
\hline
-11 & [1, 1, 3] & 1 \\
\hline
-15 & [1, 1, 4],  \textbf{ [2, 1, 2] }& 2\\
\hline
-19 &  [1, 1, 5] & 1\\
\hline
-23 &  [1, 1, 6], \textbf{ [2,$\pm$ 1, 3]} & 3 \\
\hline
-27 & [1, 1, 7] & 1\\
\hline
-31 &  [1, 1, 8], \textbf{ [2, $\pm$ 1, 4] }& 3\\
\hline
-35 & [1, 1, 9], \textbf{[3, 1, 3]} & 2 \\
\hline
-39 &   [1, 1, 10], [2, $\pm$ 1, 5], \textbf{[3, 3, 4]}  & 4\\
\hline
-43 & [1, 1, 11] & 1\\
\hline
-47 &  [1, 1, 12],  [2, $\pm$1, 6], \textbf{[3, $\pm$1, 4] } &  5\\
\hline
 -51 & [1, 1, 13],  \textbf{[3, 3, 5] }& 2 \\
 \hline
 -55&  [1, 1, 14], [2, $\pm$ 1, 7], \textbf{ [4, 3,  4] }& 4\\
 \hline
 -59 &  [1, 1, 15], \textbf{ [3,$\pm$ 1, 5] }& 3 \\
 \hline
 -63 & [1, 1, 16],  [2, $\pm$ 1, 8],  \textbf{[4, 1, 4] } & 4\\
 \hline
 -67 & [1, 1, 17] & 1\\
 \hline
 -71 &     [1,1, 18], [2, $\pm$ 1, 9],  [3, $\pm$ 1, 6], \textbf{[4, $\pm$ 3, 5] }& 7\\
 \hline
 -75 & [1, 1, 19], \textbf{ [3, 3, 7] } & 2\\
 \hline
 -79 &   [1, 1, 20], [2, $\pm$ 1 , 10],  \textbf{[4,$\pm$ 1, 5]} &  5\\
 \hline
 -83 &   [1, 1, 21], \textbf{ [3, $\pm$1, 7] } & 3\\
 \hline
 -87 &  [1, 1, 22],  [2, $\pm$ 1, 11], [3, 3, 8],  \textbf{ [4,$\pm$ 3, 6] } & 6\\
  \hline
-91 & [1, 1,  23],  \textbf{[5,  3,  5]} & 2\\
\hline
-95 & [1, 1, 24],  [2, $\pm$ 1, 12],  [3, $\pm$1, 8], \textbf{[4, $\pm$ 1, 6]}, \textbf{ [5, 5, 6]}& 8\\
\hline 
-99 & [1, 1, 25], \textbf{ [5, 1, 5] }& 2 \\
 \hline
-103 & [1, 1, 26],  [2, $\pm$1, 13], \textbf{[4, $\pm$ 3, 7]}& 5\\
 \hline
-107 &  [1, 1, 27], \textbf{[3, $\pm$1, 9]} & 3\\
 \hline
-111 & [1, 1, 28], [2, $\pm$ 1, 14], [4, $\pm$1, 7], [3, 3, 10], \textbf{[5, $\pm$3, 6] }& 8\\
 \hline
 -115 & [1, 1,  29],  \textbf{[5, 5, 7] }& 2\\
  \hline
-119 &   [1, 1, 30],  [2, $\pm$1, 15],  [3, $\pm$1, 10],  \textbf{ [5, $\pm$1, 6]},  [4, $\pm$3, 8],  \textbf{[6, 5, 6]} & 10 \\
  \hline
  -123 & [1, 1, 31], \textbf{[3, 3, 11] }& 2\\
   \hline
   -127 &  [1, 1, 32], [2, $\pm$1, 16],  \textbf{[4, $\pm$1, 8]}&  5\\
    \hline
-131&     [1, 1, 33], [3, $\pm$1, 11],  \textbf{[5, $\pm$3, 7] }& 5\\
     \hline
  -135&  [1, 1, 34],  [2, $\pm$1, 17],  [4, $\pm$1, 9], \textbf{[5, 5, 8]} & 6\\
  \hline
  -139&  [1, 1, 35],  \textbf{ [5, $\pm$1, 7] }& 3\\
  \hline
  -143 &  [1, 1, 36],  [2, $\pm$1, 18], [3, $\pm$1, 12],  [4, $\pm$1, 9],  \textbf{[6, 1, 6]},  [6, $\pm$5, 7] & 10 \\
  \hline
 -147& [1, 1, 37],  \textbf{[3, 3, 13]} & 2\\
 \hline
 -151 &   [1, 1, 38],  [2, $\pm$1, 19],  [4, $\pm$1, 10],  \textbf{[5, $\pm$1, 8] }& 7\\
 \hline
 -155&   [1, 1, 39], [3, $\pm$1, 13], \textbf{[ 5, 5, 9] }& 4\\
 \hline
 -159& [1, 1, 40],  [2, $\pm$1, 20], [3, 3, 14],  [4, $\pm$1, 10],  [5, $\pm$1, 8],  \textbf{[6, $\pm$3, 7] }& 10 \\
 \hline
 -163& [1, 1, 41] & 1 \\
 \hline
\end{tabular} 
\vspace{.3cm}
\caption{Classes of quadratics with given discriminant}
\end{table}
\end{small}

\clearpage
\noindent \textbf{Part 2: Hermitian quadratic forms}

\vspace{3mm}

In these lecture, we give a brief description of binary quadratic Hermitian forms. We start with defining  binary Hermitian quadratic  forms defined over a  subring of $\C$.

\section{Reduction of Hermitian forms}\label{hermitian-forms}

In this section first we give some basics from linear algebra about    Hermitian matrices and Hermitian binary  forms. Then,  we describe $P SL_2(\C)$ action on the 3-dimensional hyperbolic space, denoted by $\H_3$ and  define the "zero" map which gives a one to one correspondence between positive definite Hermitian forms and points in $\H_3$.  At the end of the section we will define reduction of Hermitian forms and give an algorithm how to reduce them.

\begin{defn}
An $n\times n$ matrix $A$ with complex entries  is called Hermitian if $A^* =A$, where $A^*=\bar A^T$. 
\end{defn}

Recall that $\bar A$ is obtained from $A$ by applying complex conjugation to all elements and $A^T$ is the transpose of $A$.  By the definition we see that an Hermitian matrix is unchanged by taking it's conjugate transpose. Note that any Hermitian matrix must have real diagonal entries.

Let $R$ be a subring of $\C$ with $R=\bar R$, denote with $H(R)$ the set of $2\times 2$ Hermitian matrices, i.e
\[H(R)=\{ A \in M_2(R) \,  | \, A^*=A\}\]
A $2 \times 2$ matrix is in $H(R)$ if it is of the form 
\[ A = \left(\begin{matrix} a & b\\ \bar b & d  \end{matrix}\right)\]
where $a, d \in R \cap \R$ and $b \in R$.  Every matrix $A \in H(R)$ defines a \textbf{binary Hermitian form} with entries in $R$. If $A \in H(R)$ then the associated binary Hermitian form is the semi quadratic map
\[ Q: \C \times \C \to R\]
defined by 
\[Q(X,Z)= (X,Z)  \left(\begin{matrix} a & b\\ \bar b & d  \end{matrix}\right) (X, Z)^*= aX\bar X + bX\bar Z+ \bar b \bar X Z + d Z \bar Z.\]
The discriminant $\D(Q)$ of $Q \in H(R)$ is defined as $\D(Q)=\mbox{det }(Q)=ad-|b|^2$. A binary Hermitian form $Q \in H(R)$ is \textbf{positive definite} if $Q(X,Z) >0$ for every $(X,Z) \in \C \times \C \setminus \{0, 0\}$. $Q$ is called \textbf{negative definite} if $-Q$ is positive definite and \textbf{indefinite} if $\D(Q) <0$.  Denote with $H(R)^+$ the set of positive definite Hermitian forms, i.e
\[H(R)^+=\{Q \in H(R)\, |\,  Q \,  \textit{is positive definite}\}\]
 If $a\neq 0$, then 
\[Q(X,Z)= a \left(\left| X+ \frac{bZ}{a}   \right|^2+\frac{\D}{ a^2} |Z|^2 \right).\]
Hence,  $Q\in H^+(R)$ if and only if $a>0$ and $\D > 0$.

\subsection{Upper half space and the binary Hermitian forms}

Now we  describe the 3-dimensional hyperbolic space  $ \mathcal H_3$ and the action of $P SL_2(\C)$ on $ \mathcal H_3$.  Let 
\begin{equation}\label{hyperbolic-per-half-space}
\begin{split}
 \mathcal H_3:&= \C \times (0, \infty) =\{(z,t) | z \in \C, t >0\} =\left \{(x, y, t)| x, y \in \R, t>0\right\}
 \end{split}
 \end{equation}
A point $P \in \H_3$ is given as, $P=(z,t)=(x, y, t)=z+tj$ where $z=x+iy$ and $j=(0, 0, 1)$.  The group $P SL_2(\C)$ has a natural action on $\H_3$. Let $M =\left(\begin{matrix} \a & \b\\ \g & \d  \end{matrix}\right)$, and $P=z+tj$ a point in $\H_3$. Then,  $P SL_2(\C) $ acts on $\H_3$   via linear fractional transformation as follows
\[ M \times P  \to \frac{\a P+\b}{\g P+\d}\]
More explicitly  we have $M(z+tj)=z^*+t^*j \in \H_3$ where
\[
\begin{split}
z^*&=\frac{(\a z+\b)(\bar \g \bar z + \bar \d)+\a\bar \g t^2}{|\g z+\d|^2+|\g|^2t^2}\\
t^*&=\frac{t}{|\g z+\d|^2+|\g|^2t^2}
\end{split}\]

The action of $P SL_2(\C)$  on $\H_3$ leads to an action of $ SL_2(\C)$. 
\subsection{$ GL_2(\C)$ action on the set of  Hermitian forms}

The group $GL_2(R)$, where $R \subset \C$, as in Section~\ref{hermitian-forms}, acts on $H(R)$  as follows
\begin{equation}\label{action}
\begin{split}
GL_2(R) \times H(R) &\to H(R)\\
(M, Q) & \to M^\star Q M
\end{split}
\end{equation}
for $M \in GL_2(R)$ and $Q\in H(R)$. We can define in an analogue  way an $SL_2(R)$-action on $H(R)$.  Note that if  $A$ is the Hermitian matrix of $Q$ then the Hermitian matrix of the new form is $M^\star A M$.  It is easy to show that 
\begin{equation}\label{disc-Q^M}
\D(M(Q)) =   | \det  \, M|^2 \cdot \D(Q).
\end{equation}

The group $GL_2(R)$ leaves $H^+(R)$  invariant since for $M= \left(\begin{matrix} \a &\quad  \beta\\  \g & \quad \d  \end{matrix}\right)$ and $Q \in H^+(R)$, from equation  \ref{disc-Q^M}  we have that $\D(M(Q)) > 0$  and also it is easy to check that the leading coefficient of $Q^M=Q(\a, \g)>0$.

The group $\R^{>0}$ acts on $H^+(\C)$ by scalar multiplication.  We will denote by $ \tilde H^+(\C)$ the quotient space $H^+(\C) / \R^{>0}$, and $[Q]$  the equivalence class of $Q$  in $\tilde H^+(\C)$.   The action given in \eqref{action}, of $GL_2(\C)$ on $H(\C)$, induces an action of $GL_2(\C)$ on $\tilde H^+(\C)$. 

The center of  $ SL_2(\C)$ acts trivially on $H(\C)$, so we get an induced action of $PSL_2(\C)$ on $H(\C)$  and $\tilde H^+(\C)$.

\begin{thm}\label{generators-SL_2(C)}
The group $ SL_2(\C)$ is generated by   $\left(\begin{matrix} 0  & \, \, -1 \\ 1 & \, \,  0 \end{matrix} \right)$ and $\left(\begin{matrix} 1 \quad a \\ 0 \quad 1 \end{matrix} \right)$  where $a \in \C$.  This generators act on $(z, t)$, a point in $\H_3$, as follows
\begin{equation}\label{group-act-1}
\left(\begin{matrix} 1 \quad \a \\ 0 \quad 1 \end{matrix} \right) :(z, t) \to (z+\a, t)
\end{equation}
and
\
\begin{equation}\label{group-act-2}
\left(\begin{matrix} 0  & \, \, -1 \\ 1 &\, \, 0 \end{matrix} \right):(z, t) \to \left(  \frac{-\bar z}{|z|^2+t^2}, \frac{t}{|z|^2+t^2} \right).
\end{equation}
\end{thm}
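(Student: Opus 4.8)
The statement splits into two essentially independent parts: verifying the two explicit formulas for the action on $\H_3$, and proving that $S=\begin{pmatrix} 0 & -1 \\ 1 & 0\end{pmatrix}$ together with the matrices $T_a=\begin{pmatrix} 1 & a \\ 0 & 1\end{pmatrix}$ generate $SL_2(\C)$. The plan is to dispatch the action formulas first, since they require only a substitution, and then spend the effort on the generation claim, which carries the real content.

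For the formulas I would substitute each matrix directly into the general expression for $M(z+tj)=z^*+t^*j$ displayed just before the theorem. For $T_\alpha$ the matrix entries are $\gamma=0$, $\delta=1$, so the denominator $|\gamma z+\delta|^2+|\gamma|^2t^2$ collapses to $1$ and the formula returns $(z,t)\mapsto(z+\alpha,t)$, which is \eqref{group-act-1}; for $S$ the entries are $(\alpha,\beta,\gamma,\delta)=(0,-1,1,0)$, the denominator becomes $|z|^2+t^2$, and the numerator of $z^*$ reduces to $-\bar z$, giving exactly \eqref{group-act-2}. The only care needed here is to keep the matrix entry named $\alpha$ distinct from the translation parameter $\alpha$.

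For the generation claim, let $M=\begin{pmatrix} \alpha & \beta \\ \gamma & \delta\end{pmatrix}\in SL_2(\C)$. The mechanism I would use is that left multiplication by $T_a$ adds $a$ times the second row to the first, while $S$ interchanges the rows up to sign and satisfies $S^2=-I$, so that both $S^{-1}=S^3$ and $-I$ already lie in $\langle S, T_a\rangle$. First I would reduce to the upper-triangular case: if $\gamma\neq 0$, choosing $a=-\alpha/\gamma$ clears the $(1,1)$-entry of $T_aM$, and then a single application of $S$ makes the $(2,1)$-entry vanish, producing an upper-triangular matrix; if $\gamma=0$ then $M$ is already upper triangular with $\delta=\alpha^{-1}$. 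It then remains to realize every upper-triangular element of $SL_2(\C)$ as a product of generators.

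The hard part will be exhibiting the diagonal torus inside $\langle S,T_a\rangle$. For this I would verify, by iterating $T_aS=\begin{pmatrix} a & -1 \\ 1 & 0\end{pmatrix}$ and multiplying out, the identity
\[ T_{-p}\,S\,T_{-p^{-1}}\,S\,T_{-p}\,S=\begin{pmatrix} p & 0 \\ 0 & p^{-1}\end{pmatrix}, \]
valid for every $p\in\C^\times$. Since an arbitrary upper-triangular matrix in $SL_2(\C)$ factors as $T_{qp}\begin{pmatrix} p & 0 \\ 0 & p^{-1}\end{pmatrix}$, combining this identity with the reduction step expresses $M$ as a word in $S$ and the $T_a$, completing the argument. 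The sole subtlety to monitor is the scalar $-I$ that appears when inverting $S$, but it is harmless because $-I=S^2$ is itself such a word.
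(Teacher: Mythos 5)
Your proposal is correct and takes essentially the same route as the paper: both arguments reduce an arbitrary element of $SL_2(\C)$ to the diagonal case (you via row operations by $T_a$ and $S$, the paper via an explicit four-factor decomposition $M = T_{\a\g^{-1}}\, S \, \mathrm{diag}(\g,\g^{-1})\, T_{\g^{-1}\d}$ when $\g\neq 0$), and then both invoke the very same key identity $T_{-p}\,S\,T_{-p^{-1}}\,S\,T_{-p}\,S=\mathrm{diag}(p,p^{-1})$ to express diagonal matrices as words in the generators. Your direct substitution check of the two action formulas is also fine (the paper states but does not verify them in its proof).
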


\begin{proof} Let $M =  \left(\begin{matrix} \a & \, \,  \beta\\  \g &\, \,  \d  \end{matrix}\right) \in SL_2(\C)$. Let $\g \neq 0$,  then  we can factor $M$ as follows
\[ \left(\begin{matrix} \a &\quad  \beta\\  \g & \quad \d  \end{matrix}\right) = \left(\begin{matrix} 1 &\quad  \a \g^{-1} \\  0 & 1  \end{matrix}\right)  \left(\begin{matrix} 0 & \, \,  -1 \\  1  & \quad 0 \end{matrix}\right) \left(\begin{matrix} \g & 0\\  0  & \quad  -\b +\a \g^{-1} \d \end{matrix}\right) \left(\begin{matrix} 1 &\quad  \g^{-1} \d \\  0 & 1 \end{matrix}\right) \]
Consider, $ \left(\begin{matrix} \a &\, \,  0\\  0  & \, \,  \d \end{matrix}\right)  \in SL_2(\C)$. Since, $\a \d = 1$ then there exist $x, y \in \C^\star$ such that $1 = \a \d = xy(xy)^{-1}$ then,
\[\left(\begin{matrix} \a & \quad  0\\  0  & \quad  \d \end{matrix}\right) =
\left(\begin{matrix} x &\, \,  0\\  0  & \quad  x^{-1}\end{matrix}\right)
\left(\begin{matrix} y & \, \,  0\\  0  & \quad y^{-1} \end{matrix}\right)
\left(\begin{matrix} (yx)^{-1} & \, \, 0\\  0  & \, \,  yx \end{matrix}\right)
\left(\begin{matrix} \d^{-1} & \quad 0\\  0  & \quad  \d \end{matrix}\right)
  \]
and 
\[  \left(\begin{matrix} \a & \quad 0\\  0  & \quad \a^{-1} \end{matrix}\right) = 
\left(\begin{matrix} 1 &  \quad -\a\\  0  & \quad 1 \end{matrix}\right)
\left(\begin{matrix} 0 & \quad -1\\  1  & \quad 0 \end{matrix}\right)
\left(\begin{matrix}  1 & \quad -\a^{-1}\\  0  &  \quad 1\end{matrix}\right)
\left(\begin{matrix} 0 &\quad  -1\\  1  & \quad 0 \end{matrix}\right)
\left(\begin{matrix} 1 & \quad -\a\\  0  & \quad 1 \end{matrix}\right)
\left(\begin{matrix} 0 &\quad  -1\\  1  & \quad 0 \end{matrix}\right)\]

If, $\g =0$ then we have 
\[ \left(\begin{matrix} \a & \quad \beta\\  \g & \quad \d  \end{matrix}\right) =
 \left(\begin{matrix} \a & \quad 0 \\  0 & \quad \d  \end{matrix}\right) 
 \left(\begin{matrix} 1 & \quad \a^{-1} \beta\\  0 & \quad 1  \end{matrix}\right)\]
 Hence, every matrix can be expressed in terms of $T$, and  $S$.

\end{proof}

Note that this theorem  holds if we replace $\C$ with any number field $K$. Now we define the "zero map" for Hermitian forms. 

\begin{defn}\label{zero-map-hermitian}
The map  $\z: H^+(\C) \to \mathcal{H}_3$ defined by 
\begin{equation}\label{zero-map-hermitian}
\z  \left(\begin{matrix} a & b\\ \bar b & d  \end{matrix}\right) \to- \frac b a + \frac{\sqrt{\D(Q)}} a  \cdot j
\end{equation}
is called the \textbf{"zero map"} for binary quadratic Hermitian forms.  Clearly $\z$ induces a map  $ \z:  \tilde H^+(\C) \to \mathcal{H}_3$. 
\end{defn}

Since $Q$ is positive definite we have that  $a>0$ and $\D >0$,  hence  $\z$ is well defined and continuous.   This map is a bijection since given $(z, t)\in \H_3$ we can find $Q_{z,t}=[1, -z, - \bar z, |z|^2+t^2]$, i.e.  
\[Q_{z,t}: (u, v) \to |u|^2 - zu \bar v-\bar z \bar u v   +(|z|^2+t^2)|v|^2   \]
Therefore, this map gives a one to one correspondence between equivalence classes of positive definite binary quadratic Hermitian forms and points in $\H_3$. The following theorem holds.

\begin{thm}\label{equivariant-hermitian}
The map   $\z:  \tilde H^+(\C) \to \mathcal{H}_3$ defined by 
\[ [ Q ] \to - \frac b a + \frac{\sqrt{\D(Q)}} a  \cdot j  \]
is a $ PSL_2(\C)$ equivariant,  i.e. $\z$ satisfies $\z (Q^M )= M^{-1} \z (Q)$ for every $M \in PSL_2(\C)$ and $Q \in \H^+(\C)$. 
 \end{thm}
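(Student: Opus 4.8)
\section*{Proof proposal}

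The plan is to reduce to the generators of $SL_2(\C)$ and to verify the identity $\z(Q^M)=M^{-1}\z(Q)$ directly for each generator, exactly as in the real case (Lemma~\ref{equivariant-real}). This reduction is justified by the multiplicativity of the equivariance condition: since the matrix of $Q^M$ is $M^\star A M$ (with $A$ the matrix of $Q$), one has $Q^{MN}=(Q^M)^N$, so if $\z(Q^M)=M^{-1}\z(Q)$ and $\z(Q^N)=N^{-1}\z(Q)$ hold for all $Q$, then
\[ \z(Q^{MN}) = \z\big((Q^M)^N\big) = N^{-1}\z(Q^M) = N^{-1}M^{-1}\z(Q) = (MN)^{-1}\z(Q). \]
By Theorem~\ref{generators-SL_2(C)} every element of $SL_2(\C)$, hence of $PSL_2(\C)$, is a product of copies of $S=\left(\begin{matrix} 0 & -1 \\ 1 & 0\end{matrix}\right)$ and of translations $T_\lambda=\left(\begin{matrix} 1 & \lambda \\ 0 & 1\end{matrix}\right)$, $\lambda\in\C$, together with their inverses ($T_\lambda^{-1}=T_{-\lambda}$ and $S^{-1}=S$ in $PSL_2(\C)$). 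So it suffices to treat $M=T_\lambda$ and $M=S$. Note also that $\z$ is invariant under the $\R^{>0}$-scaling (scaling $Q$ by $\rho$ scales $a,b,d$ by $\rho$ and $\sqrt{\D(Q)}$ by $\rho$, leaving $-b/a$ and $\sqrt{\D}/a$ fixed), so it is well defined on $\tilde H^+(\C)$, as already observed.

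For the translation $T_\lambda$ I would compute $T_\lambda^\star A T_\lambda$ with $A=\left(\begin{matrix} a & b \\ \bar b & d\end{matrix}\right)$, obtaining leading coefficient $a'=a$ and off-diagonal $b'=b+a\lambda$ (the discriminant is unchanged since $|\det T_\lambda|^2=1$). Applying the zero map gives $-\frac{b+a\lambda}{a}+\frac{\sqrt\D}{a}\,j$, which is the translation of $\z(Q)$ by $-\lambda$. On the other hand $T_\lambda^{-1}=T_{-\lambda}$ acts on $\H_3$ by $(z,t)\mapsto(z-\lambda,t)$ via \eqref{group-act-1}, producing the same point, so $\z(Q^{T_\lambda})=T_\lambda^{-1}\z(Q)$.

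For $S$, the conjugation $S^\star A S$ yields the form with coefficients $a'=d$, $b'=-\bar b$, $d'=a$, again with discriminant $\D$ preserved, so its image under $\z$ is $\frac{\bar b}{d}+\frac{\sqrt\D}{d}\,j$. To compare, write $\z(Q)=(z_0,t_0)$ with $z_0=-b/a$, $t_0=\sqrt\D/a$. The crucial computation is
\[ |z_0|^2+t_0^2 = \frac{|b|^2+\D}{a^2} = \frac{ad}{a^2} = \frac{d}{a}, \]
where the middle equality uses precisely $\D(Q)=ad-|b|^2$. Since $S^{-1}=S$ in $PSL_2(\C)$, applying \eqref{group-act-2} gives
\[ S^{-1}\z(Q) = \left(\frac{-\bar z_0}{|z_0|^2+t_0^2},\ \frac{t_0}{|z_0|^2+t_0^2}\right) = \left(\frac{\bar b/a}{d/a},\ \frac{\sqrt\D/a}{d/a}\right) = \frac{\bar b}{d}+\frac{\sqrt\D}{d}\,j, \]
which agrees with $\z(Q^S)$, completing the generator check and hence the theorem.

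The computations are routine matrix multiplications; the only genuinely delicate point is the $S$-case, where equivariance hinges on the identity $|z_0|^2+t_0^2=d/a$. This is the Hermitian analogue of the step $|\z(Q)|=c/a$ in the proof of Proposition~\ref{red-quad}, and it is exactly where the defining relation $\D=ad-|b|^2$ between the coefficients and the discriminant enters. I expect this to be the main thing to get right; everything else is bookkeeping, with the one caveat that one must work in $PSL_2(\C)$ throughout (so that $S$ is its own inverse and $\pm I$ act trivially) for the generator-by-generator argument to assemble into a statement about the whole group.
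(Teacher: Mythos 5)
Your proposal is correct and follows essentially the same route as the paper: verify the identity $\z(Q^M)=M^{-1}\z(Q)$ on the generators $T_\lambda$ and $S$ of Theorem~\ref{generators-SL_2(C)}, using the actions \eqref{group-act-1} and \eqref{group-act-2} and the key identity $|b|^2+\D = ad$ in the $S$-case. In fact you go slightly beyond the paper by making explicit the composition argument ($Q^{MN}=(Q^M)^N$) that justifies checking only generators, and by noting that $S^{-1}=S$ in $PSL_2(\C)$ — points the paper's proof leaves implicit.
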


\begin{proof}

We will prove the equivariance property only  for  the generators of $ PSL_2(\C)$.  Let $Q \in H^+(\C)$, and $ A = \left(\begin{matrix} a & b\\ \bar b & d  \end{matrix}\right)$ be the Hermitian matrix of $Q$, and denote with  $\D$ the discriminant of $Q$.   We want to show that $\z (Q^M) = M^{-1} \z(Q)$. 

Let $M = \left(\begin{matrix} 1 & \beta\\  0 & 1  \end{matrix}\right)$, where $\beta \in \C$. Denote with $N$ the Hermitian matrix of $ Q^M$, then 
\[ N =  M ^\star A M  =  \left(\begin{matrix} 1 & 0 \\  \b & 1  \end{matrix}\right) \cdot \left(\begin{matrix} a & b\\ \bar b & d  \end{matrix}\right) \cdot  \left(\begin{matrix} 1 & \b \\  0 & 1  \end{matrix}\right) = \left(\begin{matrix} a  & a \b + b\\   a \b +  \bar b   & \quad \b( a\b + b + \bar b) +d  \end{matrix}\right)  \]
and 
\[\z (N )= \left(-  \frac{a\b + b}{a} , \frac{\sqrt \D}{a}  \right)\]
Now let us compute $M^{-1} \z(Q)$ and compare the two.  We know that $ \z(Q) = \left( - \frac{b }{a} , \frac{\sqrt \D}{a}  \right)\in \H_3$  and from equation~\eqref{group-act-1} we have 
\[M^{-1} \z(Q) =   \left(\begin{matrix} 1 & -\b \\  0 & 1  \end{matrix}\right) \cdot \left( - \frac{b }{a} , \frac{\sqrt \D}{a}  \right) =  \left(  -\frac{b }{a}  - \beta , \frac{\sqrt \D}{a}  \right)\]
We prove it the same way for $M=\left(\begin{matrix} 0  &\, \, -1 \\ 1 & \, \, 0 \end{matrix} \right)$.  The Hermitian matrix of the form $Q^M$ is 
\[ M^\star  A M=  \left(\begin{matrix} 0  &\, 1 \\ -1 & \, 0 \end{matrix} \right) \cdot \left(\begin{matrix} a & b\\ \bar b & d  \end{matrix}\right)   \cdot   \left(\begin{matrix} 0  & \, -1 \\ 1 & \, 0 \end{matrix} \right) =    \left(\begin{matrix} d  &\, - \bar b  \, \\ -b & \, a \end{matrix} \right)\]
and 
\[\z (M^\star A M )= \left(  \frac{\bar b}{d} , \frac{\sqrt \D}{d}  \right).\]
On the other side if we consider the action of $M=\left(\begin{matrix} 0  &-1 \\ 1 &0 \end{matrix} \right)$ on   $ \z (Q) = \left(  -\frac{b }{a} , \frac{\sqrt \D}{a}  \right)\in \H_3$ from equation~\eqref{group-act-2} we have 
\[M^{-1} \z (Q)= \left(\begin{matrix} 0  &1 \\ -1 &0 \end{matrix} \right) \cdot \left( - \frac{b }{a} , \frac{\sqrt \D}{a}  \right) =    \left(  \frac{- \left(-\frac{\bar b}{a}\right) }{\frac{|b|^2}{a^2} + \frac{ \D}{a^2}}, \quad \frac{ \frac{ \sqrt\D}{a}}{\frac{|b|^2}{a^2} + \frac{ \D}{a^2}}  \right)= \left( \frac{\bar b}{ d} , \frac{ \sqrt{\D}}{ d} \right) . \]
We get the desired result by simplifying the above and the equivariance of $\z$ follows. 
\end{proof}

\begin{rem} Note that Theorem~\ref{generators-SL_2(C)}, as well as  Theorem~\ref{equivariant-hermitian} is true if we replace $\C$ by any number field $K$ and the  proof in both cases   follows through in exactly  the same way.
\end{rem}


\section{The fundamental domains  over algebraic number fields}\label{any-number-field}

In this section we part from binary quadratic Hermitian forms briefly to describe some basic results about fundamental domains of number fields.   


The action  described in Equation~\eqref{action-over-C}    makes sense when $\C$ is replaced by any number field $K$, and gives a transitive group action of $GL_2(K)$ on $\P^1(K)=K \cup \infty$. We can prove, exactly in the same way as we did for the action of $ SL_2(\C)$ over $\P^1(\C)$, that the action of $SL_2(K)$ over $\P^1(K)$ is transitive. 

For analogues of $SL_2(\Z) \subset  SL_2(\R)$  and $BQF(\Z)^+ \subset BQF(\R)^+$  we need a discrete subring of $\C$.  Let $K$  be any number field, and  consider $SL_2(\O_K)$ where $\O_K$  is the ring of integers of $K$.  The generators of the special linear group $ SL_2(\O_K)$  with entries on $\O_K$ are  $\left(\begin{matrix} 0  &-1 \\ 1 &0 \end{matrix} \right)$ and $\left(\begin{matrix} 1 \quad a \\ 0 \quad 1 \end{matrix} \right)$  where $a \in \O_K$.

A \textbf{fractional ideal} is an $\O_K$-submodule $\mathfrak a$ contained in $K$ such that there exists an element $c \neq 0$  in $\O_K$  satisfying $c \mathfrak a \subset \O_K$.  Let $\mathfrak P$ be the subset of fractional ideals, then we write $\mathfrak a \sim \mathfrak b$ if there exists an element  $ \lambda \in K^* $ such that $\mathfrak a= (\lambda) \mathfrak b$, i.e. $\mathfrak a {\mathfrak b}^{-1}$ is a principal fractional ideal. The equivalence classes of fractional ideals form a finite group which we call the  \textbf{ideal class group}. It's order is usually denoted by $h_K$, and is called the \textbf{class number} of $K$.    Then, the following theorem holds.

\begin{thm} For a number field $K$, the number of orbits for $SL_2(\O_K)$ on $\P^1(K)$ is the class number of $K$. 
\end{thm}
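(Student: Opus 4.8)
The plan is to set up an explicit bijection between the set of $SL_2(\O_K)$-orbits on $\P^1(K)$ and the ideal class group of $K$. To each point $P=(a:b)\in\P^1(K)$, written in homogeneous coordinates $a,b\in K$ not both zero, I would attach the nonzero fractional ideal $\mathfrak a_P=a\O_K+b\O_K$ and record its class $[\mathfrak a_P]$ in the ideal class group. Rescaling the coordinates by $\lambda\in K^\ast$ replaces $\mathfrak a_P$ by $(\lambda)\mathfrak a_P$, which lies in the same class, so the class map $c\colon\P^1(K)\to\mathrm{Cl}(K)$, $c(P)=[\mathfrak a_P]$, is well defined on $\P^1(K)$. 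The whole theorem then reduces to three facts: $c$ is constant on $SL_2(\O_K)$-orbits, it is surjective, and two points with the same image lie in a common orbit. Together these exhibit $h_K$ as the number of orbits.

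For invariance, note that $M=\left(\begin{smallmatrix}\alpha&\beta\\\gamma&\delta\end{smallmatrix}\right)\in SL_2(\O_K)$ sends $(a:b)$ to $(\alpha a+\beta b:\gamma a+\delta b)$; since $\alpha,\beta,\gamma,\delta\in\O_K$ both new coordinates lie in $\mathfrak a_P$, so the new ideal is contained in $\mathfrak a_P$, and applying the same remark to $M^{-1}$ (again with entries in $\O_K$) gives the reverse inclusion. Thus $M$ preserves the ideal $\mathfrak a_P$ exactly, hence its class, and $c$ factors through the orbit set. For surjectivity I would invoke that $\O_K$ is a Dedekind domain: every ideal class contains an integral ideal, and every integral ideal is two-generated, $\mathfrak c=a\O_K+b\O_K$ with $a,b\in\O_K$, so $c((a:b))=[\mathfrak c]$ and every class is attained.

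The substance, and the step I expect to be the main obstacle, is injectivity. Suppose $c(P)=c(P')$ with $P=(a:b)$, $P'=(a':b')$; after rescaling $P'$ I may assume $\mathfrak a_P=\mathfrak a_{P'}=\mathfrak a$ as honest ideals, not merely up to class. I would then pass to lattices: to the line $L=K\binom ab\subset K^2$ attach the saturated rank-one module $\Lambda_L=L\cap\O_K^2$, which is $\O_K$-isomorphic to $\mathfrak a^{-1}$, and observe that $\O_K^2=\Lambda_L\oplus C$ splits because the quotient is torsion-free, hence projective. By the Steinitz classification of modules over a Dedekind domain the complement has class $[\mathfrak a]$, and the same holds for $P'$; since $\Lambda_L\cong\Lambda_{L'}$ and $C\cong C'$ one assembles an $\O_K$-linear automorphism $\Phi$ of $\O_K^2$ with $\Phi L=L'$, that is, $\Phi\in GL_2(\O_K)$ carrying $P$ to $P'$. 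The delicate point is upgrading this $GL_2$-equivalence to an $SL_2$-one: the stabilizer of $L'$ in $GL_2(\O_K)$ realizes every unit of $\O_K$ as a determinant (act by a unit on $\Lambda_{L'}$ and trivially on a complement), so I can correct $\det\Phi\in\O_K^\ast$ and replace $\Phi$ by an element of $SL_2(\O_K)$ still sending $P$ to $P'$. Combining the three properties, $c$ descends to a bijection from $SL_2(\O_K)$-orbits on $\P^1(K)$ onto $\mathrm{Cl}(K)$, so the number of orbits equals $h_K$. I would remark that injectivity can alternatively be carried out concretely, by elementary-divisor manipulation of the two generators $a,b$ over the Dedekind ring, which avoids abstract module theory but reproduces the same content, including the unit adjustment needed to stay inside $SL_2$.
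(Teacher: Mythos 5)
Your proof is correct, but your handling of the key step --- two points whose coordinate ideals lie in the same class must lie in the same $SL_2(\O_K)$-orbit --- is genuinely different from the paper's. After the same normalization (rescale one point so that the two ideals coincide, say both equal $\mathfrak a$), the paper proceeds by explicit matrix completion: writing $1 \in \mathfrak a \mathfrak a^{-1}$ in terms of generators, it completes $(x,y)$ to a matrix $M$ of determinant $1$ whose second column lies in $\mathfrak a^{-1}$, does the same for the second point to get $M'$, and observes that $M{M'}^{-1}$ has entries in $\mathfrak a \cdot \mathfrak a^{-1} \subseteq \O_K$ and determinant $1$, so it is the required element of $SL_2(\O_K)$; determinant one is built in from the start, so no unit correction is ever needed. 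You instead argue module-theoretically: the saturated lattice $\Lambda_L = L \cap \O_K^2 \cong \mathfrak a^{-1}$ splits off (the quotient $\O_K^2/\Lambda_L$ is finitely generated torsion-free, hence projective), the Steinitz classification gives $[C]=[\mathfrak a]$ and hence $C \cong C'$, gluing isomorphisms of the summands yields $\Phi \in GL_2(\O_K)$ with $\Phi L = L'$, and you then correct $\det \Phi$ inside the stabilizer of $L'$ (a unit acting on $\Lambda_{L'}$, the identity on $C'$). Both arguments are sound. The paper's is elementary and self-contained, producing the group element by a formula; yours costs the theory of projective modules over Dedekind domains, but it isolates the structural reason the theorem is true, makes the $GL_2$-versus-$SL_2$ distinction completely transparent (it is exactly the unit-determinant issue you fix), and generalizes readily to higher rank. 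You also explicitly verify surjectivity of the class map (every class contains a two-generated integral ideal), a point the paper leaves tacit even though it is needed to conclude that the number of orbits equals $h_K$ rather than merely being at most $h_K$.
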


 \begin{proof}

Let  $P=[x, y] \in \P^1(K)$, and we will denote a fractional ideal generated by $s$, $r$ as follows $\<s, r\>=s\O_K +r\O_K$. We want to  prove that  if $[x, y]$ and $[z, w]$ are in the same $SL_2(\O_K)$ orbit, then $\mathfrak a =\<x, y\>$ and $\mathfrak b =\<z, w\>$ (the fractional  ideals generated respectively from $x,y$ and $z,w$ are in the same ideal class.  From definition, we want to show that  exists an element  $ \lambda \in K^* $ such that $\mathfrak a= (\lambda) \mathfrak b$.

The fact that $[x, y]$ and $[z, w]$ are in the same $SL_2(\O_K)$ orbit means that there exists an $M = \left( \begin{matrix} \a_1 & \a_2\\ a_3& \a_4   \end{matrix}  \right)$  and $\lambda \in K^\star$ such that 
\[ \left( \begin{matrix} \a_1 & \a_2\\ a_3& \a_4   \end{matrix}  \right)  \left[ \begin{matrix} x \\ y \end{matrix}  \right]= \lambda  \left[ \begin{matrix} z \\ w \end{matrix}  \right].\]
Hence, 
\[ \begin{split}
\a_1x+\a_2y &=\lambda z\\
\a_3x+\a_4y&=\lambda w
\end{split}
\]
and we have  $\<\lambda z, \lambda w\> \subset \<x, y\>$.  Multiplying both sides of above with $M^{-1}$ we get the other inclusion 
\[ \begin{split}
x &= \a_4 \lambda z - \a_2 \lambda w \\
y &= \a_1 \lambda w -\a_3 \lambda z
\end{split}
\]
and we conclude that $[x, y]$ and $[z, w]$ are in the same $SL_2(\O_K)$ then they are equivalent as fractional ideals, $\<x, y\> = \lambda \<z, w\>$.

Let us prove the other direction.  Let $\<x, y\>$ and $\<z, w\>$  be in the same ideal class, then there exists an element $\lambda \in K^\star$ such that  $\<x, y\> = \lambda \<z, w\>$. We want to prove that the points $[x, y]$ and $[z, w]$ are in the same $SL_2(\O_K)$. Since points in $[z, w] \in \P^1(K)$,  i.e. $[\lambda z, \lambda w] = [z, w]$, without loss of generality we can assume $\lambda$to be one.  Under this assumption $\<x, y\>$ and $\<z, w\>$ are the same as fractional ideals. 

Let $\mathfrak a =(x, y)$, then $\mathfrak a^{-1}$ is a fractional ideal and hence has two generators assume $\mathfrak a^{-1} = (m, n)$. Then, $ \mathfrak a \mathfrak a^{-1} = 1 = \<x, y\>\<m,n\> =\<xm,  xn, ym, yn\>$. 

There exist $\a_1, \a_2, \a_3, \a_4 \in \O_K$ such that  
\[1=\a_1x m+\a_2 x n +\a_3 y m+ \a_4 y n = x(\a_1m+\a_2n) + y(\a_3m+\a_4n).\]
If we let $x^\prime = \a_1m+\a_2n \in \mathfrak a^{-1} $ and $y^\prime = \a_3m+\a_4n \in \mathfrak a^{-1}$ we can form a matrix $M =  \left( \begin{matrix} x & x^\prime \\ y & y^\prime   \end{matrix}  \right)$ with determinant 1 and entries in $\O_K$.

In the same way we can show that there exists a matrix $M^\prime = \left( \begin{matrix} z &\,  z^\prime \\ w &\,  w^\prime   \end{matrix}  \right)$ with determinant 1 and entries in $\O_K$. Consider the matrix $M {M^\prime}^{-1}$, 
\[M {M^\prime}^{-1}= \left( \begin{matrix} x & \, x^\prime \\ y& \, y^\prime \end{matrix}  \right)  \left( \begin{matrix} w^\prime &\,  -z^\prime \\   -w & \, z  \end{matrix}  \right) \]
which has determinant 1 and entries in $\O_K$, i.e is a matrix in $SL_2(\O_K)$ and 
\[
\begin{split}M {M^\prime}^{-1}[z, w]&= \left( \begin{matrix} x &\,  x^\prime \\ y& \, y^\prime \end{matrix}  \right)  \left( \begin{matrix} w^\prime &\,  -z^\prime \\   -w & \, z  \end{matrix}  \right)  \left( \begin{matrix} z \\ w \end{matrix}  \right) =  \left( \begin{matrix} x & \, x^\prime \\ y& \, y^\prime \end{matrix}  \right) \left( \begin{matrix} zw^\prime- z^\prime w\\ zw -wz\end{matrix}  \right) \\
&=  \left( \begin{matrix} x & \, x^\prime \\ y&\, y^\prime \end{matrix}  \right)  \left( \begin{matrix} 1 \\ 0 \end{matrix}  \right)\\
& = [x,y]
\end{split}
\]
Therefore, $[x, y]$ and  $[z, w]$ are $SL_2(\O_k)$ equivalent.

\end{proof}

An immediate corollary of the theorem is the following.

\begin{cor}
$SL_2(\O_K)$ acts transitively on $\P^1(K)$  if and only if $K$ has class number 1.
\end{cor}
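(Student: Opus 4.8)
The plan is to read this corollary off directly from the preceding theorem, so the proof will be essentially a one-line deduction. First I would recall the definition of a transitive action given in Section~\ref{modular-group}: a group $G$ acting on a set $S$ acts transitively exactly when for each pair $x, y \in S$ there is some $g \in G$ with $g(x) = y$. It is a standard and immediate observation from the definitions that this condition is equivalent to the action having precisely one orbit (the orbit of any single point must then be all of $S$). Thus the assertion that $SL_2(\O_K)$ acts transitively on $\P^1(K)$ is the same as the assertion that the number of its orbits on $\P^1(K)$ equals $1$.

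Next I would invoke the theorem just proven, which identifies the number of $SL_2(\O_K)$-orbits on $\P^1(K)$ with the class number $h_K$ of $K$. Chaining this identification with the equivalence from the previous paragraph yields
\[
SL_2(\O_K) \text{ acts transitively on } \P^1(K)
\iff \text{(number of orbits)} = 1
\iff h_K = 1,
\]
and $h_K = 1$ is by definition the statement that $K$ has class number $1$. This establishes both implications of the corollary simultaneously.

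I do not expect any genuine obstacle in this argument, since the entire substantive content lives in the orbit-counting theorem, and the corollary is merely its specialization to the case of a single orbit. The only point worth spelling out explicitly is the elementary equivalence between transitivity and the existence of exactly one orbit; everything else is a direct substitution of $h_K = 1$ into the theorem's conclusion.
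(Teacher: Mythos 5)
Your proof is correct and matches the paper's approach exactly: the paper states this corollary without proof as an immediate consequence of the preceding theorem, which is precisely the deduction you make (transitivity $\iff$ one orbit $\iff$ $h_K = 1$). Nothing is missing.
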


Reduction theory for the case when $h_K=1$ and $h_K>1$ are significantly different.  We will consider only  the case when $h_K=1$. 

\section{Reduction theory of Hermitian forms}\label{reduction-hermitian}

Reduction of real binary forms with respect to the action of  $SL_2(\Z)$, as described in Section~\ref{reduction-quadratic},  may be extended to a reduction theory for binary forms with complex coefficients (Hermitian binary forms) under the action of certain discrete subgroups of $\C$.  In order to do that we need a discrete subring of $\C$ and then define the fundamental domain  of this action. 

In this section, we will consider the case when    $K=\Q( \sqrt \D) \subset \C$ is an imaginary quadratic number field of discriminant $\D <0$ a square-free integer, $d_K$ the discriminant of $K$, and $\O_K$ it's ring of integers which is a discrete subring of $\C$.


Let     $H(\O_K)$ denotes   the space of binary Hermitian forms with coefficients in $\O_K$,   $H^+(\O_K)$ denote  the set of positive  definite Hermitian forms with coefficients in $\O_K$, and   $H^-(\O_K)$ the set of indefinite Hermitian forms with coefficients in $\O_K$. It is easy to show that the \textbf{ "Bianchi group" }   $\Gamma = PSL_2( \O_K)$ acts on on $\H_3$, and also on $H^+(\O_K)$ preserving discriminants.  This action has a fundamental domain, which we will denote it with $\F_K$ and depends on $K$.  For small discriminant this was determined by Bianchi and others in the 19 century.  

Consider $PSL_2(\O_K)$ action on $ \H_3$, and define  the following 
\[\begin{split}
\mathcal B_K   &=   \left\{   \frac{}{}    z+rj   \in \H_3 \,  \left | \frac{}{} \right.  \, |cz+d|^2 +|d|^2 r^2 \geq 1,  \forall c, d  \in \O_K \text{ : } \< c, d \> = \O_K   \right\}\\
\mathcal P_K&=\left\{\frac{}{}z \in \C  \left | \frac{}{} \right. 0 \leq \re (z) \leq 1, \quad 0 \leq \im (z) \leq \sqrt{|d_K|}/2\right\}\\
F_K&= \mathcal P_K, \, \, \text{for } \, \,  \D\neq -3, -1\\
F_{\Q(i)}  &=  \left \{ \frac{}{} z \in \C  \left | \frac{}{} \right. \, 0 \leq |\re(z)| \leq \frac 1 2  , \, \, 0 \leq \im(z)  \leq \frac 1 2  \right\}\\
F_{\Q(\sqrt{-3})}& =   \left \{ \frac{}{}   z \in \C  \left | \frac{}{} \right. \, 0 \leq \re(z), \,  \frac{\sqrt3} 3 \re(z) \leq \im(z),   \, \im(z)  \leq \frac{\sqrt3} 3 \left(\frac{}{}1-\re(z)\right)  \right\}\\
& \,   \cup  \left \{ \frac{}{}  z \in \C  \left | \frac{}{} \right. \, 0 \leq \re(z) \leq \frac 1 2, \,  - \frac{\sqrt3} 3 \re(z) \leq \im(z)  \leq \frac{\sqrt3} 3  \re(z)  \right\}\\
\F_K&=\left\{\frac{}{}z+rj\in \mathcal B_K\,  \left | \frac{}{} \right.  \,  z \in F_K\right\}
\end{split}
\]

\begin{thm}
The set $\F_K$ is a fundamental domain for $PSL_2(\O_K)$. 
\end{thm}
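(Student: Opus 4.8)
The claim asserts two things, exactly as in the classical case of Theorem~\ref{fund}: \emph{covering}, that every point of $\H_3$ is $\Gamma$-equivalent to a point of $\F_K$, and \emph{separation}, that no two distinct points in the interior of $\F_K$ are $\Gamma$-equivalent. The whole plan is organized around the height function $P = z + rj \mapsto r$ and its transformation law recorded earlier in the excerpt: for $M = \left( \begin{matrix} \a & \b \\ \g & \d \end{matrix} \right)$ the new height is $r^* = r/(|\g z + \d|^2 + |\g|^2 r^2)$. Thus maximizing the height amounts to minimizing this denominator over the admissible bottom rows $(\g,\d)$, and $\mathcal{B}_K$ is precisely the locus where that minimum is $\geq 1$, i.e. where the height is maximal in the $\Gamma$-orbit.

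First I would settle covering. Fix $P = z + rj$. The map $(\g,\d) \mapsto |\g z + \d|^2 + |\g|^2 r^2$ is a positive definite quadratic form in the four real coordinates of $(\g,\d) \in \O_K \times \O_K \cong \Z^4$, so it is proper and assumes a discrete set of positive values; in particular its minimum over the coprime pairs $\langle \g, \d \rangle = \O_K$ is attained. Because $K$ has class number one, $\O_K$ is a PID and any such coprime pair is a B\'ezout pair, hence completes to a matrix of $SL_2(\O_K)$ with $(\g,\d)$ as bottom row. Applying a height-maximizing such $M$ moves $P$ into $\mathcal{B}_K$. It remains to adjust the horizontal coordinate: recalling the generators from Theorem~\ref{generators-SL_2(C)}, the upper-triangular elements $\left( \begin{matrix} u & \b \\ 0 & u^{-1} \end{matrix} \right)$ with $u \in \O_K^\times$, $\b \in \O_K$ have $\g = 0$, preserve height (since units of an imaginary quadratic field have modulus one), and act on $z$ by the affine maps $z \mapsto u^2 z + u\b$; these permute the orbit without leaving $\mathcal{B}_K$. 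Using the translations $z \mapsto z + a$ ($a \in \O_K$), and for $\Q(i)$ and $\Q(\sqrt{-3})$ also the rotations by $u^2$, I can bring $z$ into $F_K$ while staying in $\mathcal{B}_K$, landing in $\F_K$.

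For separation, suppose $P, P'$ lie in the interior of $\F_K$ with $P' = MP$, $M \in \Gamma$. Both lie in $\mathcal{B}_K$, so both have maximal, hence equal, height; by the transformation law the bottom row $(\g,\d)$ of $M$ satisfies $|\g z + \d|^2 + |\g|^2 r^2 = 1$. If $\g \neq 0$ then $(\g,\d)$ is a coprime pair realizing equality in one of the defining inequalities of $\mathcal{B}_K$, which is impossible at an interior point; hence $\g = 0$ and $M$ is upper-triangular, acting by $z \mapsto u^2 z + c$ for a unit $u$ and $c \in \O_K$. For $\D < -3$ one has $u^2 = 1$ and $M$ is a pure $\O_K$-translation, so the two interior points of the translation parallelogram $F_K = \mathcal{P}_K$ must coincide; for $\Q(i)$ and $\Q(\sqrt{-3})$ the extra rotations $z \mapsto u^2 z$ are exactly the symmetries for which the smaller regions $F_{\Q(i)}$ and $F_{\Q(\sqrt{-3})}$ are strict fundamental domains, again forcing $P = P'$.

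The main obstacle is the covering step, and within it two points deserve care: verifying that the minimum of the quadratic form over coprime lattice pairs is genuinely attained (properness and discreteness), and, crucially, invoking class number one to complete a minimizing coprime pair to a unimodular matrix. This is precisely where the hypothesis $h_K = 1$ enters, and where the case $h_K > 1$ would break down, since then not every coprime pair is a bottom row of $SL_2(\O_K)$. A secondary but fiddly matter is the boundary bookkeeping for the two exceptional fields, where one must check that $F_{\Q(i)}$ and $F_{\Q(\sqrt{-3})}$ really are fundamental domains for the affine group generated by $\O_K$-translations together with multiplication by $u^2$, $u \in \O_K^\times$, so that the separation statement holds verbatim on the interior.
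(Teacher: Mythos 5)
Your proposal is correct in outline, but there is nothing internal to compare it against: the paper does not prove this theorem at all --- its entire ``proof'' is the citation ``For proof see \cite[pg 319]{egm}.'' What you have reconstructed is, in substance, the standard argument of that cited reference, and it also runs parallel to the paper's own proof of Theorem~\ref{fund} for $SL_2(\Z)$ in Part 1: maximize the height $r$ over the orbit (the minimum of the positive definite form $(\g,\d)\mapsto|\g z+\d|^2+|\g|^2r^2$ over coprime pairs in $\O_K^2$ is attained by discreteness); use $h_K=1$ to complete a minimizing coprime pair to a matrix of $SL_2(\O_K)$, which is exactly what makes ``maximal height in the orbit'' equivalent to ``lies in $\mathcal B_K$''; then normalize the horizontal coordinate by the height-preserving upper-triangular elements $z\mapsto u^2z+u\b$. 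Separation follows, as you say, because equality $|\g z+\d|^2+|\g|^2r^2=1$ with $\g\neq 0$ cannot hold at an interior point of $\mathcal B_K$ (the constraint function has nonvanishing gradient there), forcing $\g=0$ and reducing everything to the affine action on $z$. So the comparison is simply this: the paper buys brevity by outsourcing the proof, while your sketch makes explicit where the hypothesis $h_K=1$ enters --- a point the paper only signals by restricting to class number one --- and ties the statement back to the machinery of Part 1.

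Two details are worth recording if your sketch is to replace the citation. First, the paper's displayed definition of $\mathcal B_K$ contains a typo: the condition $|cz+d|^2+|d|^2r^2\geq 1$ should read $|cz+d|^2+|c|^2r^2\geq 1$, as is forced by the transformation law $t^*=t/(|\g z+\d|^2+|\g|^2t^2)$ that your whole argument rests on; you silently (and correctly) used the corrected form. Second, the verification that $F_{\Q(i)}$ and $F_{\Q(\sqrt{-3})}$ are fundamental domains for the group of maps $z\mapsto u^2z+c$, with $u\in\O_K^\times$ and $c\in\O_K$, is genuinely needed for the two exceptional fields; you flag it but do not carry it out. It is an elementary lattice computation (for $\Q(i)$: translate into $[-\tfrac12,\tfrac12]^2$, then fold by $z\mapsto-z$), but without it the separation claim is incomplete precisely in the cases where $\F_K$ is not the naive rectangle.
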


\begin{proof}
For proof see ~\cite[pg 319]{egm}.
\end{proof}

The following definition is analog to the one of positive definite binary quadratic forms. 

\begin{defn}
A  positive definite Hermitian form $f \in H^+(\O_K)$  is  called a   \textbf{reduced   Hermitian form } if $\z(f) \in \F_K$. 
\end{defn}


\subsection{Counting binary quadratic Hermitian forms with fixed discriminant}

In this subsection, $K$ is an imaginary quadratic number field, as above, and $\O_K$ it's ring of integers. Let 
\[H(\O_K, \D )=\{ f \in H(\O_K) \, | \,  \D (f)=\D    \},\]
be the subspace of $H(\O_K)$ with fixed discriminant $\D$ and 
\[H^{\pm}(\O_K, \D )=\{ f \in H^{\pm}(\O_K) \, | \, \D (f)=\D \} \]
the subspace of $H^{\pm}(\O_K)$ of fixed discriminant.  Then, the following theorem holds.

\begin{thm}\label{thm2} 
Given  $\D \neq 0 \in \Z$, the number of reduced forms of  $H(\O_K, \D)$  is finite. 
\end{thm}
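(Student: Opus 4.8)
The plan is to reduce the statement to two boundedness properties of the fundamental domain $\F_K$ and then count lattice points. First I would dispose of the trivial direction: by the definition of reduced, the zero map $\z$ must be defined on $f$, so $f$ is positive definite, which forces $\D=\D(f)>0$. Hence if $\D<0$ there are no reduced forms in $H(\O_K,\D)$ and the count is $0$. So assume $\D>0$ and write a reduced form via its Hermitian matrix $f=\left(\begin{smallmatrix} a & b\\ \bar b & d\end{smallmatrix}\right)$ with $a,d\in\Z_{>0}$, $b\in\O_K$, and $\D=ad-|b|^2$; its image is $\z(f)=\left(-\tfrac{b}{a},\,\tfrac{\sqrt\D}{a}\right)\in\F_K$.

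The first fact I would use is that the horizontal part $F_K$ of $\F_K$ is a bounded region of $\C$; this is immediate from its explicit description, since $F_K\subseteq\mathcal P_K=\{\,0\le\re z\le 1,\ 0\le\im z\le\sqrt{|d_K|}/2\,\}$ (and is even smaller in the exceptional cases $\D=-1,-3$). Setting $R_K:=\sup_{z\in F_K}|z|<\infty$, reducedness gives $z=-b/a\in F_K$, whence $|b|\le R_K\,a$. For each fixed $a$ this confines $b$ to a disc, and since $\O_K$ is a lattice, i.e.\ a discrete subgroup of $\C$, only finitely many $b\in\O_K$ lie in that disc; moreover $d=(\D+|b|^2)/a$ is then determined. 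Thus everything comes down to bounding $a$.

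The second, and crucial, fact is that the height coordinate is bounded below on $\F_K$: there is a constant $r_0=r_0(K)>0$ with $r\ge r_0$ for every $z+rj\in\F_K$. Granting this, reducedness gives $\sqrt\D/a=r\ge r_0$, so $a\le\sqrt\D/r_0$ ranges over a finite set of positive integers. Combining with the previous paragraph, there are only finitely many triples $(a,b,d)$, hence finitely many reduced forms, as claimed.

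The main obstacle is establishing the lower bound $r\ge r_0$. I expect to obtain it from the structure of $\F_K$ guaranteed by the fundamental-domain theorem of Elstrodt--Grunewald--Mennicke cited above: since $h_K=1$, the quotient $PSL_2(\O_K)\backslash\H_3$ has a single cusp, represented at $\infty$, so $\F_K$ has no finite ideal vertices and its floor is a finite union of isometric-hemisphere caps $|cz+d'|^2+|c|^2r^2=1$, which by compactness of $\overline{F_K}$ is bounded away from $r=0$. Concretely, the condition $z+rj\in\mathcal B_K$ says the point lies above all these hemispheres; applying it with $c=1$ and $d'$ a nearest lattice point to $-z$ gives $r^2\ge 1-\mathrm{dist}(-z,\O_K)^2$, which already yields a positive $r_0$ whenever the covering radius of $\O_K$ is less than $1$ (e.g.\ $K=\Q(i)$, where one even gets the clean estimate $a\le\sqrt{2\D}$, or $\Q(\sqrt{-3})$). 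For the remaining class-number-one fields, whose covering radius exceeds $1$, the single-hemisphere estimate is insufficient and one instead invokes the finiteness of the floor to extract $r_0>0$. Once $r_0$ is in hand the counting argument above is routine.
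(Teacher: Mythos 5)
Your argument is correct in substance, but note that the paper never actually proves this theorem: its ``proof'' is the citation to Elstrodt--Grunewald--Mennicke, p.~411, and the only argument genuinely carried out in the paper is the worked case $K=\Q(i)$ in the subsection that follows the theorem. Your proposal coincides in spirit with that worked case (and with EGM's proof): there, reducedness of $f=[a,b,c]$ is unpacked into $|\re(b)|\le a/2$, $|\im(b)|\le a/2$ and $|b|^2+\D\ge a^2$, which is exactly your scheme ``bounded $F_K$ plus a lower bound on the height,'' and it yields precisely your clean estimate $a^2\le 2\D$. What you add is the isolation of the two facts that make this work uniformly over all class-number-one fields: boundedness of $F_K$ (a small slip here: $F_{\Q(i)}$ and $F_{\Q(\sqrt{-3})}$ are not literally contained in $\mathcal P_K$, since they allow $\re(z)<0$, but they are bounded, which is all you use), and a positive lower bound $r_0$ for the height coordinate on $\F_K$. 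That second fact is the real content, and you correctly identify both its role and the fact that the naive $c=1$ estimate $r^2\ge 1-\mathrm{dist}(-z,\O_K)^2$ breaks down exactly for $D=19,43,67,163$, where the covering radius of $\O_K$ exceeds $1$. However, your fallback for those fields is itself an appeal to EGM's structure theory (finitely many faces, single cusp at $\infty$), so at that point your proof is no more self-contained than the paper's citation; to close it honestly one should prove: every $z\in\overline{F_K}$ lies strictly inside some shadow disc $|cz+d|<1$ with $\langle c,d\rangle=\O_K$. For $z\in K$ this holds because $\O_K$ is a PID (write $z=-d/c$ in lowest terms), and for $z\notin K$ it follows from a Dirichlet-type approximation argument over $K$, coprimality again being free in a PID; compactness of $\overline{F_K}$ then gives $r_0>0$, and your lattice-point count (finitely many $a\le\sqrt{\D}/r_0$, finitely many $b$ with $|b|\le R_K a$, and $d$ determined by $\D=ad-|b|^2$) finishes the proof. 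Finally, your disposal of $\D<0$ is right relative to this paper's definitions, where only positive definite forms can be reduced, though you should be aware that the theorem as stated in EGM also covers indefinite forms, for which a different notion of reducedness (and a separate argument) is required.
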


The proof can be found in ~\cite[pg. 411]{egm}.

\begin{cor}
For any $\D \in \Z$ with $\D \neq 0$ the set $H(\O_K, \D)$  (and $H^{\pm}(\O_K, \D)$) splits into finitely many $SL_2(\O_K)$ orbits. 
\end{cor}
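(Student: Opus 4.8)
The plan is to deduce the corollary from the finiteness of reduced representatives asserted in Theorem~\ref{thm2}, through the principle that every orbit contains a reduced form. First I would observe that for $M \in SL_2(\O_K)$ one has $\det M = 1$, so by~\eqref{disc-Q^M} the discriminant is invariant, $\D(Q^M) = \D(Q)$; moreover $\pm I$ act trivially on forms, so the $SL_2(\O_K)$- and $PSL_2(\O_K)$-orbits of a given form coincide and I may pass freely between the two. Consequently each of $H(\O_K, \D)$, $H^+(\O_K, \D)$ and $H^-(\O_K, \D)$ is a union of entire $SL_2(\O_K)$-orbits, and it is meaningful to count the orbits inside each.

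The central step is to show that each orbit meets the set of reduced forms. For a positive definite $Q \in H^+(\O_K, \D)$ the point $\z(Q)$ lies in $\H_3$, and since $\F_K$ is a fundamental domain for the $PSL_2(\O_K)$-action on $\H_3$, there is some $N \in PSL_2(\O_K)$ with $N\,\z(Q) \in \F_K$. Taking $M = N^{-1}$ in the equivariance relation $\z(Q^M) = M^{-1}\z(Q)$ of Theorem~\ref{equivariant-hermitian}, the form $Q' = Q^{N^{-1}}$ satisfies $\z(Q') = N\,\z(Q) \in \F_K$; hence $Q'$ is reduced, lies in the orbit of $Q$, and has the same discriminant $\D$. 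Thus every positive definite orbit contains a reduced form.

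For the remaining sign classes I would argue by case on the sign of $\D$. When $\D > 0$ every form of discriminant $\D$ is definite, and $Q \mapsto -Q$ is an $SL_2(\O_K)$-equivariant, discriminant-preserving bijection from the negative definite forms onto $H^+(\O_K, \D)$ (using $(-Q)^M = -(Q^M)$ and $\D(-Q) = \D(Q)$), so the negative definite forms inherit the conclusion; together with the positive definite case this exhausts $H(\O_K, \D)$. When $\D < 0$ every form is indefinite, so $H(\O_K, \D) = H^-(\O_K, \D)$; here the zero map into $\H_3$ does not apply, and I would invoke the reduction theory for indefinite Hermitian forms of~\cite{egm} to guarantee that each orbit again contains a reduced representative.

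The conclusion is then a clean counting argument: distinct orbits are disjoint, and each carries a nonempty set of reduced forms, so assigning to an orbit its reduced members produces pairwise disjoint nonempty subsets of the finite set of reduced forms of discriminant $\D$ furnished by Theorem~\ref{thm2}. Hence the number of orbits in $H(\O_K, \D)$, and in each of $H^{\pm}(\O_K, \D)$, is finite. The step I expect to be the main obstacle is the indefinite case: since $\z$ and $\F_K$ are built only for positive definite forms, both the notion of ``reduced'' and the surjectivity of reduction must be supplied independently there, which is precisely the role of~\cite{egm}.
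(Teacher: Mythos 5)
Your proposal is correct and takes essentially the same approach as the paper, whose own proof is a one-line deduction from Theorem~\ref{thm2} combined with the observation that every form in $H(\O_K, \D)$ is $PSL_2(\O_K)$-equivalent to a reduced form. You simply make explicit what the paper treats as immediate — the discriminant invariance under $SL_2(\O_K)$, the use of the fundamental domain together with the equivariance of $\z$ to produce a reduced representative in each positive definite orbit, and the definite/indefinite case split, where both you and the paper ultimately rely on~\cite{egm} for the indefinite (and general reduced-form finiteness) theory.
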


\begin{proof}
This is an immediate consequence of Theorem~\ref{thm2}, and Theorem~\ref{equivariant-hermitian}  which says that every $f \in H(\O_K, \D)$, is $PSL_2(\O_K)$-equivalent to a reduced form. 

\end{proof}

For any $\D \in \Z$ with $\D \neq 0$ define 
\[ \tilde H(\O_K, \D)= SL_2(\O_K)\backslash H(\O_K, \D), \]
and denote by  $h(\O_K, \D) := \left| \frac{}{} \tilde H(\O_K, \D) \right|$,  where the number $h(\O_K, \D)$ is called the \textbf{class number of binary Hermitian forms of discriminant $\D$}.

We define the same way for positive definite Hermitian forms $\tilde H^+(\O_K, \D)= SL_2(\O)\backslash H^+(\O, \D)$ such that $h^+(\O_K, \D)= \left| \frac{}{} \tilde H^+(\O_K, \D) \right|$, and  $h^+(\O_K, \D)$ is called the \textbf{class number of positive definite binary Hermitian forms of discriminant $\D$}.  Note that for $\D >0$ we have that $h(\O_K, \D)=2h^+(\O, \D)$. 

Given $\O_K$ and the discriminant $\D$ it is always possible to compute the class number of positive definite binary Hermitian forms with given discriminant $\D$. Let us now consider the case when $K=\Q(i)$. Then, $d_K =-4$  and the  ring of integers is  the ring of Gaussian integers $\O_K=\Z[i]$ . 

\begin{lem}The fundamental domain  $\F_{\Q(i)}$ for $PSL_2(\Z[i])$  is   as follows 
\begin{equation}\label{fund-imaginary}
\F_{\Q(i)}=\left\{\frac{}{}z+rj\in \H_3 \,  \left | \frac{}{} \right. \,   0 \leq |\re(z)| \leq \frac 1 2   , \, \, 0 \leq \im(z)  \leq \frac 1 2 , \, \,   z \bar z + r^2 \geq 1  \right\}
\end{equation}
$\F_{\Q(i)}$ is a hyperbolic pyramid with one vertex at infinity and the other four vertices in the points $P_1= -\frac 1 2 + \frac{\sqrt 3} 2 \cdot j$,  $P_2= \frac 1 2 + \frac{\sqrt 3} 2 \cdot j$, $P_3= -\frac 1 2 (1+i) + \frac{\sqrt 2} 2 \cdot j$,  $P_4= -\frac 1 2 (i-1) + \frac{\sqrt 2} 2 \cdot j$.  Let 
\[ A = \left( \begin{matrix} 1 & \, 0 \\ 1 & \, 1  \end{matrix}\right), \quad B=\left( \begin{matrix}  0 & \, -1\\ 1 & \, 0  \end{matrix}\right), \quad C= \left( \begin{matrix} 1 & \, 0 \\ i & \, 1  \end{matrix}\right) \]
Then the following is a presentation for $PSL_2(\Z[i]$.
\[ \begin{split}
PSL_2(\Z[i] = \left \<  \frac{}{} \right.  A, B, C    \left | \frac{}{} \right. \,  (AB)^3&= B^2= ACA^{-1}C^{-1} = (BCBC^{-1} )^3 =\\ 
& = (BC^2BC^{-1})^2 = (ACBAC^{-1}B)^2=1 \left  \>  \frac{}{} \right.
\end{split}\]
\end{lem}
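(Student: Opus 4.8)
The goal is to establish two separate assertions: a geometric description of the fundamental domain $\F_{\Q(i)}$ as a hyperbolic pyramid with five specified vertices, and an explicit group presentation for $PSL_2(\Z[i])$. The plan is to treat these as largely independent tasks and then remark that both are classical results of Bianchi. For the domain description, I would first specialize the general definition of $\F_K$ from Section~\ref{reduction-hermitian} to the case $K=\Q(i)$, where $d_K=-4$. The set $F_{\Q(i)}$ is the square $0\leq|\re(z)|\leq \frac12$, $0\leq\im(z)\leq\frac12$, and intersecting the cylinder over this region with $\mathcal B_{\Q(i)}$ yields~\eqref{fund-imaginary}. The crucial simplification is that for $\Q(i)$ the infinitely many sphere conditions $|cz+d|^2+|d|^2r^2\geq 1$ defining $\mathcal B_K$ collapse to the single unit hemisphere $z\bar z + r^2\geq 1$: I would verify that all other coprime pairs $(c,d)$ give spheres whose bounding conditions are automatically implied over the square, leaving only $(c,d)=(1,0)$ active.

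To locate the vertices, I would find the points where the floor hemisphere $z\bar z+r^2=1$ meets the vertical walls of the cylinder. The walls $\re(z)=\pm\frac12$ together with $\im(z)=0$ give $z=\pm\frac12$, and substituting into $r^2=1-|z|^2=\frac34$ produces $P_1,P_2$ at height $\frac{\sqrt3}{2}$. The walls $\re(z)=\pm\frac12$ meeting $\im(z)=\frac12$ give $z=\pm\frac12+\frac12 i$, hence $|z|^2=\frac12$ and $r^2=\frac12$, producing $P_3,P_4$ at height $\frac{\sqrt2}{2}$. The fifth vertex is the cusp at infinity where the vertical walls converge. I would then check that these four finite vertices, together with the point at infinity, bound the stated hyperbolic polyhedron, each face lying either on a vertical geodesic plane or on the unit hemisphere.

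For the presentation of $PSL_2(\Z[i])$, my approach would be to cite and reconstruct the classical computation of Bianchi. The generators $A,B,C$ correspond to a parabolic translation by $1$, the inversion $S$, and a parabolic translation by $i$, respectively; by Theorem~\ref{generators-SL_2(C)} these indeed generate $SL_2(\Z[i])$ since every element factors into the inversion and the translations $z\mapsto z+a$ with $a\in\Z[i]=\Z\cdot 1+\Z\cdot i$. Verifying that the listed words are relations is a direct matrix computation: $B^2=-I$ is identity in $PSL_2$, $ACA^{-1}C^{-1}=I$ reflects commutativity of the translation subgroup, and the remaining relations $(AB)^3$, $(BCBC^{-1})^3$, $(BC^2BC^{-1})^2$, $(ACBAC^{-1}B)^2$ I would confirm by multiplying the explicit $2\times 2$ matrices modulo $\pm I$.

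The main obstacle is proving that the given relations are \emph{complete}, i.e.\ that no further relations are needed. This is the genuinely hard part and cannot be settled by elementary matrix arithmetic. The standard route is the Poincaré polyhedron theorem: one uses the fundamental domain $\F_{\Q(i)}$ established in the first half, identifies the face-pairing transformations among $A,B,C$ and their inverses, and reads off a complete set of relations from the edge cycles around the polyhedron. Since carrying out the edge-cycle analysis in full is lengthy, and the result is due to Bianchi and is recorded in~\cite{egm}, I would present the relations as obtained by this polyhedral method and defer the detailed edge-cycle verification to that reference, mirroring how the theorem on $\F_K$ being a fundamental domain was itself quoted from~\cite[pg 319]{egm}.
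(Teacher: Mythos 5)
The paper does not actually prove this lemma: its entire proof is the citation \cite[pg.~325]{egm}. Your proposal is therefore a genuinely different route---it reconstructs the content rather than outsourcing it---and in outline it is correct: specialize the general definition of $\F_K$ to $K=\Q(i)$, show that the family of sphere conditions collapses to the single hemisphere $z\bar z+r^2\geq 1$, read off the four finite vertices as the points of that hemisphere above the corners of the square, check by matrix arithmetic that $A,B,C$ generate and satisfy the listed relations, and appeal to the Poincar\'e polyhedron theorem (through \cite{egm}) for completeness of the relations. What your route buys is a self-contained derivation of the geometry that leverages the paper's own earlier theorem that $\F_K$ is a fundamental domain; what the paper's bare citation buys is coverage of the one step you still cannot avoid citing, namely completeness of the presentation. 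One refinement to your collapse argument: it involves infinitely many pairs $(c,d)$, but it can be made uniform, since for $|c|^2\geq 4$ the hemispheres have radius at most $\frac12$ while the floor $z\bar z+r^2=1$ over the square never dips below height $\frac{\sqrt2}{2}$, so only $|c|^2\in\{1,2\}$ needs case-by-case checking (with tangency exactly at $P_3,P_4$ when $|c|^2=2$).

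Three small repairs are needed. (i) $A$ and $C$ are lower triangular, hence parabolics fixing $0$ (e.g.\ $A\colon z\mapsto z/(z+1)$), not translations; they are the $B$-conjugates of translations, $B\left(\begin{smallmatrix}1&a\\0&1\end{smallmatrix}\right)B^{-1}=\left(\begin{smallmatrix}1&0\\-a&1\end{smallmatrix}\right)$, and your generation argument needs this one extra line before Theorem~\ref{generators-SL_2(C)} applies. (ii) The paper's printed definition of $\mathcal B_K$ reads $|cz+d|^2+|d|^2r^2\geq 1$, which for $(c,d)=(1,0)$ gives $|z|^2\geq 1$, a cylinder rather than the hemisphere; the condition you tacitly (and correctly) use is $|cz+d|^2+|c|^2r^2\geq 1$, as dictated by the height transformation $t^*=t/(|\g z+\d|^2+|\g|^2t^2)$, so you are working from a corrected formula and should say so. (iii) Your base points $\pm\frac12+\frac{i}{2}$ for $P_3,P_4$ are the ones compatible with the stated square $0\leq\im(z)\leq\frac12$; the coordinates printed in the lemma, $-\frac12(1+i)$ and $-\frac12(i-1)$, have $\im(z)=-\frac12$ and lie outside that square, so your computation silently corrects a sign typo in the statement---better to flag this explicitly than to appear to contradict it.
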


\begin{proof}
See \cite[pg. 325]{egm}
\end{proof}

We want to count the number of reduced positive definite  binary Hermitian forms with a fixed discriminant $\D$, i.e $h^+(\Z[ i ],  \D)$ . Let $f = \left(\begin{matrix} a & \, b\\ \bar b & \, c  \end{matrix}\right)$ be a positive definite binary quadratic  Hermitian form with coefficients in $Z[i]$ and  non-zero discriminant $\D$.   The binary quadratic Hermitian form $f$ is reduced if $\z(f) \in \F_{\Q(i)}$, i. e 
\[- \frac b a + \frac{\sqrt \D} a \cdot j \in \F_{\Q(i)}.\]  
If we let  $z= - \frac b a$ and $r=\frac{\sqrt \D} a$, from \eqref{fund-imaginary} we have $a \leq c$, $0 \leq  |\re(-b)| \leq \frac a 2$, and $0 \leq \im(-b) \leq \frac a 2$, and $a^2 \leq 2 \Delta$. 

By discreteness of $\Z[i]$, the elements $a$, and $b$ may take only finitely many values. The discriminant  $\D=ac-b \bar b $, hence $c$ is determined by $a$, and $b$. Therefore,   $c$ may take only finitely many values too.  

In the following table we listing (counting) the number of reduced  binary quadratic Hermitian forms with fixed discriminant.  To each tuple $[a, b, c]$ corresponds a binary quadratic Hermitian form 
\[Q(X,Z) =aX\bar X + bX\bar Z+ \bar b \bar X Z + c Z \bar Z.\]
In the first column is given the discriminant, in the second one the reduced forms $[a, b, c]$  with that given discriminant, and in the third column the number of reduced forms.

\begin{small}
\begin{table}[h]  \label{tab-1}
\centering
\begin{tabular}{|c| c| c|} 
\hline
$\Delta$  & Reduced form representative of classes  given  by $[a, b,c]$ & n \\
\hline
1 & [1, 0, 1]  & 1 \\
\hline
2 & [1, 0, 2],  [2, 0, 2], [2, $\pm$ 1-i, 2]  & 4\\
\hline
3 &  [1, 0, 3], [2, $\pm$ 1, 2], [2, -i, 2]&4 \\
\hline
4 &  [1, 0, 4], [2, 0, 2], [2, $\pm$1-i, 3] & 4 \\
\hline
5 & [1, 0, 5], [ 2, $\pm$1, 3], [2, -i, 3]   & 4 \\
\hline
6 & [1, 0,6], [2, 0, 3], [2, $\pm$1-i, 4]  & 4 \\
\hline
7 & [1, 0, 7], [2, $\pm$1, 4], [2, -i, 4], [3, $\pm$1-i, 3] & 6\\
\hline
8 &  [1, 0, 8], [2, 0, 4], [2, $\pm$1-i, 5],  [3, $\pm$ 1, 3], [3, -i, 3], [4,$ \pm$ 2 -2i, 4] & 9  \\
\hline
9 & [1, 0, 9], [2, $\pm$1, 5], [2, -i, 5], [3, 0, 3]   & 5 \\
\hline
10 &  [1, 0, 10], [2, 0, 5], [2, $\pm$1-i, 6], [3, $\pm$1-i, 4] & 6 \\
\hline
11 &  [1, 0, 11], [2, $\pm$1, 6], [2, -i, 6], [3, $\pm$ 1, 4], [3, -i, 4], [4, $\pm$ 2 -i, 4], [4, $\pm$1-2i, 4]&11 \\
\hline
12 & [1, 0, 12], [2, 0, 6], [2, $\pm$1-i, 7], [3, 0, 4]  & 5 \\
\hline
13 &  [1, 0, 13], [2, $\pm$1, 7], [2, -i, 7], [3, $\pm$1-i, 5], & 6 \\
\hline
14 &  [1, 0, 14], [2, 0, 7], [2, $\pm$1-i, 8], [3, $\pm$1, 5], [3, -i, 5], [4, $\pm$1-i, 4] & 9 \\
\hline
15 &[1, 0, 15], [2, $\pm$1, 8], [2, -i, 8],  [3, 0, 5], [4, $\pm$2, 4], [4,  -2i, 4], [4, $\pm$1-2i, 5],& 12 \\
& [4, $\pm2$-i, 5] &\\
\hline
16 & [1, 0, 16],[2, 0, 8], [2, $\pm$1-i, 9],  [2, $\pm$1-i, 6], [4, 0, 4], [4, $\pm$2, 5], [4, -2i, 5] & 12 \\
\hline
17 & [1, 0, 17], [2, $\pm$1, 9], [2, -i, 9], [3, $\pm$1, 6], [3, $\pm$i, 6], [5, $\pm$2-2i, 5]  & 10 \\
\hline
18 & [1, 0, 18], [2, 0, 9], [2, $\pm$1-i, 10], [3, 0, 6], [4, $\pm$1-i, 5], [4, $\pm$2, 5], [4, -2i, 5] &10\\
\hline
19&  [1, 0, 19], [2, $\pm$ 1, 10], [2, -i, 10], [3, $\pm$1-i, 7], [4, $\pm$1, 5], [4, -i, 5], [4, $\pm$1-2i, 6], & 13\\
&  [4, $\pm$2-i,6] &\\
\hline
\end{tabular} 
\vspace{.3cm}
\caption{Classes of binary quadratic Hermitian forms with given discriminant}
\end{table}
\end{small}

\newpage
\noindent \textbf{Part 3:  Reduction of binary forms of higher degree}

\vspace{3mm}

In Part 3 of these lectures we describe how Julia, and then Stoll-Cremona  developed reduction theory  for binary forms defined over $\R$, $\C$ of degree $n \geq 2$ using reduction theory  of binary quadratics, respectively Hermitians.


\section{Introduction to higher degree binary forms}\label{bin-forms}


Let  $k$ be   an algebraically closed field. In this section we define    binary forms of degree $n$  with coefficients in $k$ and the  action of $ GL_2(k)$ on the space of degree $n$ binary forms.  

Let $k[X,Z]$  be the  polynomial ring in  two variables and  let $B_n$ denote  the $(n+1)$-dimensional subspace  of  $k[X,Z]$  consisting  of homogeneous polynomials.
\begin{equation}
\label{eq-1} 
f(X,Z) = a_0 X^n + a_1X^{n-1}Z + \dots + a_nZ^n
\end{equation}
of  degree $n$. Elements  in $B_n$  are called  \textbf{binary  forms of degree $n$}.  Since  $k$ is an algebraically closed field, the binary form $f(X,Z)$   can be factored as
 \begin{equation}  f(X,Z)  = (z_1  X  -  x_1 Z) \cdots  (z_d  X  - x_d  Z)
=  \prod_{1 \leq  i \leq  d} det
\begin{pmatrix}
X&x_{i}\\Z&z_i\\
\end{pmatrix}
\end{equation}
The points  with homogeneous coordinates $(x_i, z_i)  \in \mathbb P^1 (k)$ are  called the \textbf{ roots  of the  binary  form} in Eq.~ \eqref{eq-1}.

The group $GL_2(k)$ acts by linear transformations   on the variables of  $ f \in k[X, Z]$. Let $M= \begin{pmatrix} \a &\b \\  \gamma & \d \end{pmatrix}  \in GL_2(k)$ and  $ f \in k[X, Z]$, then
\[
\begin{split}
GL_2(k) \times B_n(k) &\to B_n(k)\\
\left (M, f(X, Z)  \right) & \to f(\a X+\b Z, \gamma X+\d Z)
\end{split}
\]
This action of $GL_2(k)$  leaves $B_n$ invariant.   For $M  \in GL_2(k)$ it is easy to show that
\begin{center}
$M\left(f(X,Y)\right)  = \det(M)^ n  (z_1^\prime  X -  x_1^{\prime}  Z) \cdots (z_n^{\prime} X  - x_n^{\prime} Z)$.
\end{center}
where
\[
 \begin{pmatrix}   x_i^{\prime}  \\  z_i^{\prime}  \end{pmatrix}
= M^{-1}
\begin{pmatrix} x_i\\ z_i \end{pmatrix}
\]

$GL_2(k)$ action on $B_n$ induces a $SL_2(k)$ action on this set. It is well  known that $SL_2(k)$ leaves a bilinear  form (unique up to scalar multiples) on $B_n$ invariant.

\begin{defn} A non-zero degree $n \geq 3$ binary form is called \textbf{stable} if none of its roots has multiplicity $\geq \frac n 2$. 
\end{defn}

\section{Julia  invariant and covariant of a binary form} 

In 1917, Julia in his thesis \cite{julia-reduction} introduced an invariant of the action of $SL_2(\R)$ on binary forms. This invariant was used to define reduction theory for binary forms of higher degree.   In this section we define  Julia's invariant  for a binary form of degree $n \geq 2$. 

Let $f(X, Z) \in \R[X, Z]$ be a degree $n$ binary form  given as follows 
\[ f(X, Z) =a_0X^n+a_1X^{n-1}Z+ \cdots +a_nZ^n   \]
and suppose that $a_0 \neq 0$. Let the real roots of $f(X,Z)$ be $\a_i$,   for $1 \leq i \leq r$ and the pair of complex roots $\b_j$, $\bar \b_j$ for $1 \leq j \leq s$, where $r+2s =n$.  To obtain a representative point in the complex upper half plane,  construct a   quadratic form 
\[ Q_f (X,Z) =\sum_{i=1}^r t_i^2(X-\a_iZ)^2+ \sum_{j=1}^s 2u_j^2(X-\b_jZ)(X-\bar \b_jZ), \] 
where $t_i$, $u_j$ are real numbers that have to be determined.  The following lemma holds.

\begin{lem}\label{julian-quad-min}
i) $Q_f (X,Z) \in \R[X, Z]$ is a positive definite quadratic form

ii) There exists a unique tuple    $\mathfrak t_f= (t_1, \dots , t_n)$ which  make $\theta_0 (Q_f)$
\[ \theta_0 (Q_f) = \frac{a_0^2 (\D( Q_f))^{n/2}}{\prod_{i=1}^rt_i^2 \prod_{j=1}^s u_j^4  }\]
minimal.

%
%

\end{lem}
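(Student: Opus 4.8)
The plan is to treat the two parts separately: (i) by an elementary positivity argument, and (ii) by recasting the minimization as a convex optimization problem in logarithmic coordinates.

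\textbf{Part (i).} First I would check that $Q_f$ has real coefficients. The terms $t_i^2(X-\a_iZ)^2$ are manifestly real, and each conjugate pair contributes $(X-\b_jZ)(X-\bar\b_jZ)=X^2-2\re(\b_j)XZ+|\b_j|^2Z^2\in\R[X,Z]$. For positive-definiteness, observe that for real $(X,Z)$ each real-root term is a square, hence $\geq0$, while each conjugate-pair term equals $|X-\b_jZ|^2\geq0$ and vanishes only at $(X,Z)=(0,0)$ because $\b_j\notin\R$. Thus with $t_i^2,u_j^2>0$ we get $Q_f\geq0$, and a vanishing $Q_f(X_0,Z_0)=0$ at real $(X_0,Z_0)\neq0$ would force every summand to vanish simultaneously: a single conjugate-pair term already rules this out, and when $s=0$ the presence of two distinct real roots (available since $f$ is separable of degree $n\geq2$) does the same. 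Hence $Q_f$ is positive definite and $\D(Q_f)<0$.

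\textbf{Part (ii), setup.} Set $T_i=t_i^2$ and $U_j=u_j^2$, and let $S_f$ be the symmetric matrix of $Q_f$, so that $S_f=\sum_iT_iv_iv_i^{T}+\sum_j2U_jW_j$, where $v_i=(1,-\a_i)^{T}$ and $W_j$ is the positive-definite matrix of the $j$-th conjugate-pair form. The key identity is $\D(Q_f)=-4\det S_f$; reading $\D(Q_f)^{n/2}$ as $|\D(Q_f)|^{n/2}=(4\det S_f)^{n/2}>0$, we obtain $\log\theta_0=\mathrm{const}+\tfrac n2\log\det S_f-\sum_i\log T_i-2\sum_j\log U_j$. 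Since $S_f$ is linear in $(T,U)$ and the pieces $v_iv_i^{T},W_j$ are positive semidefinite, a Cauchy--Binet expansion exhibits $\det S_f$ as a posynomial in $(T,U)$: explicitly $\det S_f=\sum_{a<b}T_aT_b(\a_a-\a_b)^2+(\text{mixed terms})+\sum_j4U_j^2(\im\b_j)^2+\cdots$, all of whose coefficients are strictly positive because the roots are distinct. I would record that $\theta_0$ is invariant under the common scaling $(t_i,u_j)\mapsto(\lambda t_i,\lambda u_j)$, which in the variables $(\tau_i,\upsilon_j)=(\log T_i,\log U_j)$ is the direction $(1,\dots,1)$; hence the minimizing tuple $\mathfrak t_f$ can be unique only modulo this scaling (equivalently, after normalizing $\det S_f=1$).

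\textbf{Convexity, existence, uniqueness.} Because $\det S_f$ is a posynomial, $\log\det S_f$ is a convex function of $(\tau,\upsilon)$ (a log--sum--exp), and subtracting the linear form $\sum_i\tau_i+2\sum_j\upsilon_j$ keeps $\log\theta_0$ convex. Existence I would get from coercivity modulo scaling: on the slice $\det S_f=1$, as any $T_i\to0$ or $U_j\to0$ the denominator $\prod_iT_i\prod_jU_j^2\to0$ while the numerator stays bounded below (there is no pure $T_i^2$ monomial, and killing one $U_j$ leaves the remaining form positive definite), so $\theta_0\to\infty$; properness then yields an interior minimizer. Uniqueness I would deduce from strict convexity transverse to the scaling direction, using that a log--sum--exp is strictly convex precisely along the span of differences of the exponent vectors of its monomials; here the exponent vectors of $T_aT_b,\,T_aU_j,\,U_iU_j,\,U_j^2$ affinely span the hyperplane complementary to $(1,\dots,1)$ as soon as the roots are distinct and $n\geq3$. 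Thus the set of minimizers is a single scaling orbit, giving the asserted unique $\mathfrak t_f$.

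The main obstacle is exactly this last strict-convexity step: verifying that the monomials of $\det S_f$ supply enough independent exponent vectors to pin the minimizer down transverse to scaling, which is where the hypothesis that $f$ be separable (so every $(\a_a-\a_b)^2$ and $(\im\b_j)^2$ is nonzero) is essential. Equivalently, one may phrase the whole argument geometrically: $\log\theta_0$ descends, via the root map $\z$, to a proper function on the hyperbolic plane $\H_2$ that is a sum of horocyclic terms (one per real root) and distance terms (one per complex root), each convex along geodesics; strict convexity of the sum, again requiring distinct roots, produces a unique covariant point $\z(Q_f)\in\H_2$ and hence the unique minimizing tuple.
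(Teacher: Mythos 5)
Your proposal is correct, and for part (ii) it follows a genuinely different route from the paper. For (i) you use the same decomposition as the paper (real-root squares plus conjugate-pair factors), but your version is actually more complete: the paper only checks that each summand has discriminant $\leq 0$ --- for a real root the summand $(X-\a_iZ)^2$ is merely semidefinite --- and never explains why the sum is strictly definite, whereas you supply that step (a conjugate-pair term, or two distinct real roots, forces any real zero of $Q_f$ to be the origin). For (ii) the paper gives no argument at all; it cites Lemma 4.2 of Stoll--Cremona, whose proof is exactly the hyperbolic argument you mention only in your closing paragraph: properness and strict convexity along geodesics of the induced function on $\H_2$ (resp.\ $\H_3$), yielding a unique covariant point. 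Your primary argument --- substituting $T_i=t_i^2$, $U_j=u_j^2$, expanding $\det S_f$ as a posynomial with strictly positive coefficients, deducing convexity of $\log\theta_0$ in logarithmic coordinates via log-sum-exp, existence from coercivity on the slice $\det S_f=1$, and uniqueness from strict convexity transverse to the scaling direction --- is a self-contained Euclidean (geometric-programming) proof carried out in the tuple space itself. Each approach buys something: yours is elementary and makes the hypotheses visible (distinct roots give the positive coefficients $(\a_a-\a_b)^2$; $n\geq 3$ gives coercivity and enough exponent-vector differences for transverse strict convexity), while the hyperbolic formulation reduces everything to dimension $2$ or $3$ and makes the $SL_2$-equivariance of the minimizing point --- the property the reduction algorithm actually needs --- transparent. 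Two further remarks. First, your observation that $\theta_0$ is invariant under $(t_i,u_j)\mapsto(\lambda t_i,\lambda u_j)$, so that uniqueness can only hold modulo scaling (equivalently, for a normalized tuple, which is how Stoll--Cremona phrase it), is a correct sharpening: the lemma as printed is loose on this point. Second, in the coercivity step you should make explicit that on the slice $\det S_f=1$ every monomial is at most $1$, so all pairwise products of distinct variables and all $U_j^2$ are bounded; hence a variable can tend to infinity only while all the others decay like its reciprocal, and the product $\prod_i T_i\prod_j U_j^2$ then tends to $0$ precisely because $n\geq 3$. Without this bookkeeping, ``some variable tends to $0$'' does not by itself force the denominator to vanish in the limit; this is a matter of detail, not a flaw in the approach.
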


\proof
i) If we let $Z=1$  we have that 
\[\begin{split} Q_f (X,1) &=\sum_{j=1}^n t_j(X-\a_j)(X-\bar \a_j)= \sum_{j=1}^n t_j(X^2 - (\a + \bar \a) X + |\a|^2 )\\
&= \sum_{j=1}^n t_j(X^2 - 2 \re(\a) X + |\a|^2 )
\end{split}\] 
where $\a \in \C$. Computing the discriminant of $g=X^2 - 2 \re(\a) X + |\a|^2 $ we get $\Delta(g) = -4\im (\a)^2\leq 0$.  Since the $t_j$ are assumed to be positive and $\Delta < 0$, then $g$ is positive definite. 

ii)See   \cite[Lemma 4.2]{stoll-cremona}.

\qed

Choosing  $(t_1, \dots, t_n)$ that   make $\theta_0$ minimal  gives a unique positive definite quadratic $Q_f (X, Z)$.  We call this unique quadratic $Q_f (X, Z)$ for such a choice of $(t_1, \dots , t_n)$ the \textbf{Julia quadratic} of $f(X, Z)$ and denote it by $\J_f (X, Z)$.  From the previous remarks, this is well defined.

In the next example we show how these coefficients are picked in the case of binary cubics with reals roots.

\begin{exa}
Let $f(X)=aX^3+bX^2+cX+d$ be a binary cubic with three real roots $\a_1, \a_2, \a_3$. We pick $t_1, t_2, t_3$ as follows:
\[ t_1 = (\a_2 -\a_3)^2 , t_2 =(\a_3-\a_1)^2  , t_3 =(\a_1-\a_2)^2 \]
and Julia quadratic is as follows
\[\J_f(X, Z)=   (\a_2 -\a_3)^2(X-\a_1)^2 + (\a_3-\a_1)^2 (X-\a_2)^2+ (\a_1-\a_2)^2 (X-\a_3)^2\]
We can express the Julia quadratic covariant in terms of the coefficient of $f(X)$ as follows
\[\J_f(X, Z)=   (b^2-3ac)X^2 + (bc-9ad)X+ (c^2-3bd)\]
up to a constant factor.  
\end{exa}

The proof of the following lemma can be found in  \cite{julia-reduction}.  

\begin{lem} 
i)  $\theta_0 $ is an $SL_2(\R)$ invariant of the binary  form $f(X,Z)$.  

ii)  $\J_f (X,Z)$ is an $SL_2(\R)$ covariant of $f(X, Z)$
\end{lem}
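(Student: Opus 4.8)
The plan is to reduce both statements to a single transformation law for the auxiliary quadratic $Q_f$, and then to observe that the multiplicative weight picked up by the numerator of $\theta_0$ is exactly cancelled by the weight picked up by its denominator, once the parameters $t_i,u_j$ are transported correctly. I would work with an arbitrary $M=\begin{pmatrix}\a&\b\\\g&\d\end{pmatrix}\in SL_2(\R)$ rather than just the generators, since the root-based computation is uniform.

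First I would record how the linear factors behave under $M$. Writing $f^M(X,Z)=f(\a X+\b Z,\g X+\d Z)$ and substituting into $(X-\a_i Z)$, one finds that this factor becomes $(\a-\g\a_i)(X-\a_i'Z)$, where $\a_i'=(\d\a_i-\b)/(\a-\g\a_i)$ is precisely the image of $\a_i$ under $M^{-1}$, in agreement with the root transformation law recorded in Section~\ref{bin-forms}. The same holds for each complex factor $(X-\b_j Z)$, which acquires the scalar $(\a-\g\b_j)$; since $\a,\g\in\R$ the conjugate factor acquires $\overline{(\a-\g\b_j)}$, so the real block $(X-\b_j Z)(X-\bar\b_j Z)$ is multiplied by $|\a-\g\b_j|^2$. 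Substituting into the definition of $Q_f$ then yields the key identity
\[ (Q_f)^M(X,Z)=Q_{f^M}(X,Z), \]
valid once we relabel the parameters by $t_i'=t_i\,|\a-\g\a_i|$ and $u_j'=u_j\,|\a-\g\b_j|$. Thus $M$ carries the family of auxiliary quadratics attached to $f$ bijectively onto the family attached to $f^M$, positivity being preserved because the scalars are nonzero reals (here the hypothesis $a_0\neq0$ guarantees, for generic $M$, that no root is sent to infinity; the remaining $M$ follow by continuity).

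Next I would track the three ingredients of $\theta_0$ under this correspondence. The discriminant is governed by the formula $\D(Q^M)=(\det M)^2\,\D(Q)$ from Section~\ref{modular-action-quadratic}, so $\D(Q_{f^M})=\D(Q_f)$ and the factor $\D(Q_f)^{n/2}$ is unchanged. For the leading coefficient, evaluating $f^M$ at $(1,0)$ gives $a_0'=f(\a,\g)=a_0\prod_i(\a-\g\a_i)\prod_j|\a-\g\b_j|^2$, so $a_0'^2$ differs from $a_0^2$ by the factor $\prod_i(\a-\g\a_i)^2\prod_j|\a-\g\b_j|^4$. The denominator transforms by exactly the same factor, since $\prod_i t_i'^2\prod_j u_j'^4=\left(\prod_i(\a-\g\a_i)^2\prod_j|\a-\g\b_j|^4\right)\prod_i t_i^2\prod_j u_j^4$. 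These two weights cancel, giving $\theta_0(Q_{f^M};t',u')=\theta_0(Q_f;t,u)$ for every matched pair of parameters.

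From this identity both assertions follow. Minimizing over the bijectively matched parameter families shows the minimal value is the same for $f$ and $f^M$, which is (i): $\theta_0$ is $SL_2(\R)$-invariant. Moreover the minimizer $\mathfrak t_{f^M}$ is the image of $\mathfrak t_f$ under the parameter correspondence, so evaluating the key identity at the optimal parameters gives $\J_{f^M}=(\J_f)^M$, which is exactly (ii): $\J_f$ is an $SL_2(\R)$-covariant. I expect the main obstacle to be the bookkeeping of the scalar weights $(\a-\g\a_i)$ and $|\a-\g\b_j|$: the argument succeeds only because the weight accumulated by $a_0^2$ matches, factor for factor, the weight accumulated by $\prod_i t_i^2\prod_j u_j^4$ while the discriminant power contributes nothing. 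Verifying this balance, together with the fact that the minimizer itself (and not merely the minimal value) is transported, is the real content; the handling of roots tending to infinity is a minor technical point dispatched by continuity.
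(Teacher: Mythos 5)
Your proposal is correct, but note that the paper offers no proof to compare against: it simply defers this lemma to Julia's thesis, citing \cite{julia-reduction}. What you have written is a self-contained version of the classical argument (essentially the one in Julia and in Stoll--Cremona \cite{stoll-cremona}), and the computations check out: each factor $(X-\a_i Z)$ transforms under $M$ into $(\a-\g\a_i)(X - (M^{-1}\a_i) Z)$, so $(Q_f)^M$ with parameters $(t,u)$ coincides with $Q_{f^M}$ with the transported parameters $t_i'=t_i|\a-\g\a_i|$, $u_j'=u_j|\a-\g\b_j|$; the discriminant factor is untouched because $\det M =1$, and the weight $\prod_i(\a-\g\a_i)^2\prod_j|\a-\g\b_j|^4$ acquired by $a_0^2$ is exactly the weight acquired by $\prod_i t_i'^2\prod_j u_j'^4$, so $\theta_0$ is constant along the parameter correspondence and both (i) and (ii) follow by minimizing over matched families. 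Two small points deserve correction, though neither affects the substance. First, your appeal to continuity for the exceptional matrices is misplaced: if $f(\a,\g)=0$ then some root of $f$ is sent to infinity, the leading coefficient of $f^M$ vanishes, and $\theta_0(f^M)$ is then not even defined by the paper's formula, so such $M$ are excluded by the standing hypothesis $a_0\neq 0$ rather than recovered by a limiting argument (Stoll--Cremona avoid this by working projectively with the factorization $\prod_i (z_iX-x_iZ)$). Second, $\theta_0$ is invariant under the simultaneous rescaling $t_i\mapsto\lambda t_i$, $u_j\mapsto\lambda u_j$, so the minimizing tuple is unique only up to a positive scalar; accordingly the covariance you derive, $\J_{f^M}=(\J_f)^M$, holds up to a positive scalar factor, which is the sense in which it is used in the paper (only the class of $\J_f$ modulo $\R^{>0}$, i.e.\ its image under the zero map, ever enters).
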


In the literature $\theta_0 (f)$ is known as \textbf{Julia's invariant} of the binary form $f(X,Z)$. Julia gave explicitly $(t_1, \dots, t_n)$ only for cubics and quartics, and then Stoll and Cremona, in \cite{stoll-cremona}, provide a method for determining $t_i$'s (and therefore both $\theta_0 $ and  $\J_f (X,Z)$)  for binary forms of degree $n \geq 2$, as described in the next subsection.

Next, we make the necessary adjustments such that the above construction will work for binary forms with complex coefficients as well.

\subsection{Reduced binary forms with complex coefficients }\label{binary-red}

 Let $B_n$ be the space of degree $n$ binary forms in $K[X, Z]$, where $K$ is either $\R$ or $\C$,   and $f(X, Z)$ a stable binary form  in $B_n$ given as follows 
\[ f(X, Z) =a_0X^n+a_1X^{n-1}Z+ \cdots +a_nZ^n   \]
and suppose that $a_0 \neq 0$. Then, $f(X, Z)$ can be factored as 
\begin{equation}\label{roots}
 f(X, Z) =a_0 (X-\a_1Z)(X-\a_2Z) \cdots (X-\a_nZ), 
\end{equation}
for $\a_i \in \C$.   Construct a  positive definite quadratic form 
\[Q(X,Z) =  \sum_{j=1}^n t_j \cdot (X-\a_j Z) (X- \bar{\a}_i Z)     =\sum_{j=1}^n t_j|X-\a_jZ|^2, \] 
where $t_j$ are positive real numbers that have to be determined.  The following is true. 

\begin{lem} 
i) $Q_f (X,Z)$ is a positive definite quadratic Hermitian form

ii) There exists a unique tuple    $\mathfrak t_f= (t_1, \dots , t_n)$ which  make $\theta_0 (Q_f)$
\[ \theta_0 (Q_f) = \frac{|a_0|^2 (\D( Q_f))^{n/2}}{n^n \, t_1 \cdots t_n}\]
minimal.
\end{lem}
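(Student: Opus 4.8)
\emph{Proof plan.} For part (i) I would expand each factor to read off the Hermitian structure and then get definiteness from stability. Writing
\[ |X-\a_jZ|^2=(X-\a_jZ)(\bar X-\bar\a_j\bar Z)=X\bar X-\bar\a_j X\bar Z-\a_j\bar X Z+|\a_j|^2 Z\bar Z, \]
we obtain $Q_f=aX\bar X+bX\bar Z+\bar b\bar X Z+cZ\bar Z$ with $a=\sum_j t_j$, $b=-\sum_j t_j\bar\a_j$, and $c=\sum_j t_j|\a_j|^2$; since $a,c\in\R$ this is a genuine binary Hermitian form. Each summand $t_j|X-\a_jZ|^2$ is positive semidefinite and vanishes only on the line $X=\a_jZ$, so a positive combination can vanish at $(X,Z)\neq(0,0)$ only if $X=\a_jZ$ holds simultaneously for all $j$, forcing all roots equal. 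Because $f$ is stable it has at least three distinct roots, so this is impossible and $Q_f$ is positive definite (equivalently $a>0$ and $\D(Q_f)>0$, the criterion of Section~\ref{hermitian-forms}, with $\D(Q_f)>0$ from the computation below).

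For part (ii) the key preliminary computation is
\[ \D(Q_f)=ac-|b|^2=\sum_{i<j}t_it_j\,|\a_i-\a_j|^2, \]
obtained by symmetrizing the expansion $ac-|b|^2=\sum_{i,j}t_it_j\big(|\a_j|^2-\a_i\bar\a_j\big)$ in the indices $i,j$. Writing $c_{ij}:=|\a_i-\a_j|^2\ge 0$, the quantity to be minimized over $t_j>0$ is
\[ \theta_0(Q_f)=\frac{|a_0|^2}{n^n}\cdot\frac{\big(\sum_{i<j}t_it_jc_{ij}\big)^{n/2}}{t_1\cdots t_n}, \]
which is invariant under the common rescaling $t\mapsto\lambda t$. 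The plan is to linearize this scaling via the substitution $t_j=e^{s_j}$, giving
\[ \log\theta_0(Q_f)=\mathrm{const}+\frac n2\,\log\Big(\sum_{i<j}e^{s_i+s_j}c_{ij}\Big)-\sum_{j=1}^n s_j. \]
The first term is a log-sum-exp composed with the linear maps $s\mapsto s_i+s_j$, hence convex, and the whole expression is unchanged under $s\mapsto s+\lambda(1,\dots,1)$; so $\log\theta_0$ descends to a convex function $\bar g$ on the quotient $\R^n/\R(1,\dots,1)\cong\R^{n-1}$.

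It then remains to show $\bar g$ is \emph{strictly} convex and \emph{coercive}, which together force a unique minimizer. For strict convexity, the Hessian of the log-sum-exp term is the covariance matrix of the vectors $e_i+e_j$ under the induced Gibbs weights, and its kernel consists of the $v$ with $v_i+v_j$ constant over all pairs with $c_{ij}>0$. Since $f$ is stable it has $\ge 3$ distinct roots, so the graph joining indices with distinct roots is a complete multipartite graph on $\ge 3$ parts, hence connected and containing a triangle (non-bipartite); for such a graph this kernel is exactly $\R(1,\dots,1)$, the direction already quotiented out, so $\bar g$ is strictly convex. For coercivity, fix the gauge $\sum_j s_j=0$; as $\|s\|\to\infty$ the largest coordinate $s_k\to+\infty$, and if $A$ is the class of indices sharing the root $\a_k$ then stability gives $|A|=:m<n/2$, while $\sum_j s_j=0$ forces some $l\notin A$ with $s_l\ge-\tfrac{m}{\,n-m\,}s_k$, so the edge $(k,l)$ contributes $s_k+s_l\ge\tfrac{n-2m}{\,n-m\,}s_k\to+\infty$ and $\bar g\to+\infty$. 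A strictly convex coercive function on $\R^{n-1}$ has a unique minimizer; lifting it and fixing the overall scale (as in \cite[Lemma~4.2]{stoll-cremona}) yields the unique tuple $\mathfrak t_f$.

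The main obstacle is part (ii), and specifically its interaction with stability. The naive minimization has no unique solution because $\theta_0$ is scale-invariant, so one must pass to the quotient $\R^{n-1}$; there both strict convexity (which needs $\ge 3$ distinct roots, equivalently a non-bipartite distinctness graph) and coercivity (which needs every multiplicity $<n/2$) rely precisely on the stability hypothesis, and the degenerate low-degree cases (e.g. $n=2$, where $\theta_0$ is constant in $t$) are exactly those excluded by stability. The reformulation $t_j=e^{s_j}$ is what makes these two issues tractable.
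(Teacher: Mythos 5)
Your proof is correct, and it takes a genuinely different route from the paper. For this lemma the paper gives no self-contained argument: it declares the proof ``analogous'' to Lemma~\ref{julian-quad-min}, whose part (i) is a per-summand discriminant computation and whose part (ii) is just the citation \cite[Lemma 4.2]{stoll-cremona}; in that reference uniqueness is proved on the hyperbolic side, by showing that a suitable proper function of the point $z \in \H_2$ (resp.\ $\H_3$) is strictly convex along geodesics, the weights $t_j$ being recovered afterwards from the unique critical point (this is what the system \eqref{system-zero} and the formula \eqref{t_i} later in Part 3 encode). You instead optimize directly in the weight variables: the identity $\D(Q_f)=\sum_{i<j}t_it_j|\a_i-\a_j|^2$ plus the substitution $t_j=e^{s_j}$ turns $\log\theta_0$ into a log-sum-exp (convex) term minus a linear term, scale invariance becomes translation invariance along $(1,\dots,1)$, and stability enters exactly twice: at least three distinct roots makes the distinctness graph complete multipartite on at least three parts, hence connected and non-bipartite, which pins the Hessian kernel to $\R(1,\dots,1)$ and gives strict convexity on the quotient, while the multiplicity bound $m<n/2$ gives coercivity on the slice $\sum_j s_j=0$. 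Your approach buys a self-contained, elementary proof (convex analysis plus a little graph theory, no hyperbolic geometry) that makes the role of stability transparent; the approach the paper defers to buys the geometric object the rest of Part 3 actually needs, namely the minimizing point in $\H_3$ as a covariant of $f$, with the $t_j$ as a by-product. Two further remarks in your favour: your part (i) repairs a gap in the paper's own argument (each summand is only positive \emph{semi}definite, so definiteness of the sum needs the roots not all equal, which you correctly extract from stability), and you are right that ``unique tuple'' cannot be literally true as stated, since $\theta_0$ is homogeneous of degree zero in $t$; uniqueness holds only up to scaling, i.e.\ after a normalization such as the one implicit in \eqref{t_i}.
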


The proof is analogue to the proof of  Lemma~\ref{julian-quad-min}.
%
%
%
%
%
%
As previously, it can be proved that: 
i)  $\theta_0 $ is an $SL_2(K)$ invariant of the binary  form $f(X,Z)$, and     ii)  $\J_f (X,Z)$ is an $SL_2(K)$ covariant of $f(X, Z)$.

Thus, to  each binary form $f$ of degree $n>2$ we associate a unique  positive definite binary quadratic Hermitian form  $\J_f (X,Z)$. 
Next, we will show how to extend the zero map of quadratic forms to the set of  degree $n$ binary forms.

\def\zz{\bar \z}

We define the zero map for a binary form as 
\[ 
\begin{split}
\zz : & B_n \longrightarrow \H_3 \\
&f  \to \z (\J_f ) \\
\end{split}
\]
where $\z$ is as defined in ~\eqref{zero-map-hermitian} if $K=\C$, and as defined in ~\eqref{zero-map-real} if $K=\R$.  Note that  $\z (\J_f )$ is the point in $\H_2$ ($\H_3$) associated to the binary quadratic (respectively Hermitian) form $\J_f$.   We proved in \ref{equivariant-real} (respectively \ref{equivariant-hermitian}) that the zero map is an $SL_2(\Z)$-equivariant map ($SL_2(\C)$-equivariant map),  therefore for any $M\in SL_2 (\Z)$ ($M\in SL_2 (\C)$)  the  following holds for any binary form $f$ 
\[ \zz(f^M) = M^{-1} \zz(f). \]

Now we can define the  binary form to be reduced in analogy to part 1 and part 2. 


\begin{defn}
A stable binary form $f(X,Z)\in \R[X, Z]$ is said to be a \textbf{reduced binary form} if and only if $\zz(f) \in \F$, where $\F$ is the fundamental domain of of $SL_2(\Z)$.  A complex degree $n$ binary  forms $f(X, Z)$ is reduced if $\zz(f)$ is in a fixed fundamental domain for the actin of $SL_2(\O_K)$.
\end{defn}

For real forms, the covariance of $\zz(f)$ implies that each $SL_2(\Z)$-orbit of stable real binary forms contains at least one reduced form $f$.  Usually there will be exactly one reduced form in each orbit unless $\zz (f)$ is on the boundary of the fundamental domain, when there may be two. 

To find the reduced form first we compute $\zz(f)$ then if $\z(f)\in \F$ we are done. Otherwise,  find an $M \in SL_2(\Z)$ such that $M^{-1} \cdot \zz(f) \in \F$ as explained in 
Theorem~\eqref{equivariant-hermitian}. Then,   
\[ f^{M^{-1}} = f(dX-bZ, -cX+aZ) \]
is the reduced form of $f(X, Z)$.

Another property of Julia's  invariant,  as shown below, is that Julia's invariant bounds the leading term of the binary form, as well as the roots.  

\begin{lem}[Julia]
If $\z (f) \in \F$, then $a_0 \leq \frac{1}{3^{n/2}n^n} \cdot \theta_0$. 
\end{lem}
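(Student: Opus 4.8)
The plan is to convert the hypothesis $\z(f)\in\F$ into an explicit lower bound on the discriminant of the Julia quadratic, and then combine this with the defining formula for $\theta_0$ and the arithmetic--geometric mean inequality. First I would use that $\zz(f)=\z(\J_f)$, so the hypothesis says precisely that the positive definite form $\J_f$ is reduced. Writing $\J_f=[A,B,C]$ for its coefficients, the shape of $\F$ (namely $|z|\ge 1$ together with $|\re(z)|\le \tfrac12$) forces a lower bound on the imaginary part, since $\im(z)^2=|z|^2-\re(z)^2\ge 1-\tfrac14=\tfrac34$, whence $\im(\z(\J_f))\ge \tfrac{\sqrt3}{2}$. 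Because $\im(\z(\J_f))=\sqrt{|\D(\J_f)|}/(2A)$ in the real ($\H_2$) setting, this is equivalent to the key estimate
\[ |\D(\J_f)|\ \ge\ 3A^2, \qquad\text{hence}\qquad |\D(\J_f)|^{n/2}\ \ge\ 3^{n/2}A^{n}. \]

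Next I would identify the leading coefficient $A$ of $\J_f$ with the weights $t_i,u_j$. From the construction $\J_f=\sum_{i=1}^r t_i^2(X-\a_iZ)^2+\sum_{j=1}^s 2u_j^2(X-\b_jZ)(X-\bar\b_jZ)$ one reads off $A=\sum_i t_i^2+\sum_j 2u_j^2$, a sum of terms attached to the $n=r+2s$ roots. The point is then to bound the denominator of $\theta_0$: regrouping the product $\prod_i t_i^2\prod_j u_j^4=\prod_i t_i^2\prod_j\bigl(\tfrac12\cdot 2u_j^2\bigr)^2$ as a product of exactly $n$ factors (each real root contributing one factor $t_i^2$, and each complex conjugate pair contributing the two equal factors $u_j^2$), the AM--GM inequality applied to these $n$ nonnegative numbers gives $\prod_i t_i^2\prod_j u_j^4\le (A/n)^n$.

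Finally I would substitute both estimates into $\theta_0=a_0^2\,|\D(\J_f)|^{n/2}\big/\bigl(\prod_i t_i^2\prod_j u_j^4\bigr)$ to obtain
\[ \theta_0\ \ge\ \frac{a_0^2\cdot 3^{n/2}A^{n}}{(A/n)^n}\ =\ 3^{n/2}\,n^{n}\,a_0^2, \]
which rearranges to the asserted bound $a_0^2\le \theta_0/(3^{n/2}n^n)$ on the leading coefficient. The main obstacle here is normalization bookkeeping rather than any conceptual difficulty: I must fix the convention so that the discriminant $\D(\J_f)$ appearing in the definition of $\theta_0$ is exactly the positive quantity $|\D(\J_f)|=4AC-B^2$ controlling $\im(\z(\J_f))$ (the Hermitian convention $\D=ad-|b|^2>0$ makes this automatic), and I must arrange the weighted AM--GM so that each complex pair is counted with multiplicity $2$, matching the power $u_j^4$ against the two roots it represents. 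Once these two normalizations are pinned down, the displayed chain of inequalities is immediate; the identical argument runs in the $\H_3$ setting for forms with complex coefficients, using the height lower bound coming from the relevant fundamental domain $\F_K$ in place of $\tfrac{\sqrt3}{2}$.
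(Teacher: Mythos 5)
The paper states this lemma without proof (it is quoted from Julia's thesis, with the surrounding discussion deferring to \cite{julia-reduction}, \cite{cremona-red} and \cite{stoll-cremona}), so there is no internal proof to compare against; your argument is the classical one, and its skeleton is sound: $\z(f)=\z(\J_f)\in\F$ forces $\im(\z(\J_f))\ge \sqrt{3}/2$, hence $|\D(\J_f)|\ge 3A^2$ for the leading coefficient $A$ of $\J_f$; the $n$ numbers $t_1^2,\dots,t_r^2,u_1^2,u_1^2,\dots,u_s^2,u_s^2$ sum exactly to $A$, so AM--GM gives $\prod_i t_i^2\prod_j u_j^4\le (A/n)^n$; substituting both into the definition of $\theta_0$ yields the bound.

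Two caveats, one of which is substantive. First, what your chain of inequalities actually proves is $a_0^2\le \frac{1}{3^{n/2}n^n}\,\theta_0$, not the printed inequality $a_0\le \frac{1}{3^{n/2}n^n}\,\theta_0$. This is a defect of the statement rather than of your proof: replacing $f$ by $\lambda f$ with $\lambda>0$ leaves the roots, the optimal weights, and $\z(f)$ unchanged while scaling $\theta_0$ by $\lambda^2$ and $a_0$ by $\lambda$, so the unsquared inequality is not scale-invariant and fails for small $\lambda$; the squared version is the correct (and classical) form, and the printed one follows from it only under an extra normalization such as $a_0\ge 1$ for integral forms. You should state explicitly that you are proving the squared version and why the statement must be read that way. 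Second, your parenthetical that the Hermitian convention $\D=ad-|b|^2$ ``makes this automatic'' is off by a factor: that convention corresponds to $AC-|B|^2=\tfrac14\left(4AC-B^2\right)$ for a real form, which would replace $3^{n/2}$ by $(3/4)^{n/2}$; the constant $3^{n/2}n^n$ in the lemma pins down the convention $|\D|=4AC-B^2$, which is the one you correctly use in the main computation. Similarly, in the $\H_3$ setting the height lower bound over $\F_K$ is not $\sqrt{3}/2$ (for $\Q(i)$ the minimum height on the fundamental domain is $1/\sqrt{2}$), so the numerical constant genuinely changes there, as you acknowledge.
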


Julia, also showed that one can bound the magnitude of the roots $|\a_i|$  of $f(X, Y)$ in terms of $\theta_0/a_0$, i.e.
\[ |\a_i|^2  \leq \frac{1}{(n-1)^{n-1} \, 3^{n/2}} \cdot \frac{ \theta_0}{a_0^2}.\]
For more details about bounds see \cite{baker}, \cite{cremona-red}, and \cite{bhargava-yang}.

\section{An algorithm for reduction of binary forms}

In this section we   describe briefly an algorithm of Cremona and Stoll as in \cite{stoll-cremona} for computing the Julia quadratic and then the reduction of the binary form.   Unfortunately, the algorithm is based on computing the roots of the binary invariant.

%
%
%

Let $f(X, Z)$ be a binary form of degree $n$ written as in Eq.~\eqref{roots}.  To determine the $t_1, \dots , t_n$ coefficients of the Julia quadratic we solve for $t$ and $u$ the following system
\begin{equation}\label{system-zero}
\left\{
\begin{split}
& \sum_{j=1}^n \frac{u^2}{|t-\a_j|^2+u^2} = \frac n 2\\
&  \sum_{j=1}^n \frac{t-\a_j}{|t-\a_j|^2+u^2} = 0
\end{split}
\right.
\end{equation}
Then, the coefficients $t_1, \dots , t_n$ are given by
\begin{equation}\label{t_i}
t_i= \frac  2 n \frac{su^2}{|t-\a_j|^2+u^2}, \quad \textit{ for } i=1, \dots , n.
\end{equation}
and without loos of generality we can assume $s=1$. In \cite{stoll-cremona} it is proved that for a stable form $f\in B_n$, the representative point $\bar \z(f) \in \H_2$ (or $\H_3$)  is given as $\bar \z(f) = (t, u)$, where $(t,u)$ is the unique solution in $K \times R^+$ of the system \eqref{system-zero}.

Every solution to the system gives rise to a critical point in a compact domain $D$, which then must be the unique minimizing point of the Julia invariant.  Hence, we can compute $\bar \z(f)$ numerically, by performing a search for solutions of the above system.

\subsection{Implementation issues}
This is a summary of Section 6 in  \cite{stoll-cremona}. Let $F(X, Z)$ be a  stable binary form with degree $n\geq 3$ and coefficients in $\R$ and let $f(X)= F(X, 1)$. Define 
\[ Q_{0, F}(X, Z) = \sum_{i=1}^n \frac{(X-\a_iZ)(X-\bar \a_i Z)}{|f^\prime(\a_i)|^{2/(n-2)} }. \]
In \cite{stoll-cremona} it is proved that for all $n \geq 3$, $Q_{0, F}$ is positive definite and a covariant of $F$.  Denote with $\z_0 = \z (Q_{0, F})$.  If $F$ is a real form with distinct roots then $Q_{0, F}(X, Z)$ is well defined and as we saw in Part 1 has  a root $\z_0 $  in $ \H_2$, see equation~\eqref{zero-map-real}.  A binary form $F$ is sad to be \textbf{ $Q_0$-reduced} if $\z_0(F) \in  \F$, where $\F$ is the fundamental domain of $SL_2(\Z)$ acting on $\H_2$, as described in part 1.   When $\z_0(F) \in  \F$, $\z(F)$ is expected to be not very far from $\F$ and therefore we can bring in $\F$ with a couple more moves. 

Note that using this definition, $Q_0$-reduced, instead of the usual one given  in Section~\ref{binary-red}  is more convenient since $Q_{0, F}$  is easily written down. But note that this does not give optimal results if $F$ is a  binary forms with degree $n \geq 5$ as shown in 
\cite[Section 6]{stoll-cremona}. The algorithm  to reduce a binary form $F(X,Z)$ is as follows.

Firstly, we  compute $\z_0(F)$ numerically.  While $\z_0(F)$ is outside $\F$ repeat the following:

  i)  To get $\re(\z_0(F))$ inside the strip $-\frac 1 2$, $\frac 1 2$, let $m$ be the nearest integer to $\re(\z_0(F))$.  Then, let $\z_0(F)= \z_0(F) - m$ and we perform the inverse operation on $F(X, Z)$, i.e $F(X, Z)= F(X+mZ, Z)$.

  ii)  We want $|\z_0(F)| > 1$, if not let $\z_0(F) = - \frac{1}{\z_0(F)} $ and set $F(X, Z) = F(Z, -X)$. 

At the end we compute $\z(\J_F)$ where $\J_F$ is Julia's quadratic. If this is not in $\F$ we perform the same operations as described above and the forms $F$ will be the reduced form.

Now we are ready to summarize the  algorithm as follows. 

\begin{alg} Reduction Algorithm \\

\noindent \textbf{Input:} A stable degree $n>2$ binary  form $F(X,Z) \in \R[X, Z] $.  

\noindent \textbf{Output:}   A reduced binary form $G(X, Z)$ in the $SL_2(\Z)$-orbit of $F(X,Z)$. \\

\noindent \textbf{Step 1:}  Compute
\[ Q_{0, F}(X, Z) = \sum_{i=1}^n \frac    1     {|f^\prime(\a_i)|^{2/(n-2)} }  \,  {(X-\a_iZ)(X-\bar \a_i Z)}. \]
where $\a_i$ are the roots of $F(X,Z)$.  \\


\noindent  \textbf{Step 2:}   Compute the zero map image $\zz_0 (F)  = \z (Q_0)$. \\ 

\noindent \noindent  \textbf{Step 3:}      While $\zz (F)= \re(\zz (F)) + i \im(\zz (F))$ is outside $\F$ repeat the following.\\
 
  \textbf{shift}  Let $m$ be the nearest integer to $\re(\zz_0 (F))$.  Then, let $\zz_0(F)= \zz_0(F) - m$ and we perform the inverse operation on $G(X, Z)$, i.e $F(X, Z)= F(X+mZ, Z)$. \\

   \textbf{invert}    We want $|\zz_0(F)| > 1$, if not let $\zz_0(F) = - \frac{1}{\zz_0(F)} $ and set $G(X, Z) = F(Z, -X)$.  \\

\noindent  \textbf{Step 5:}  Compute Julia's quadratic covariant $\J_F$. \\

\noindent  \textbf{Step 6:} Compute the zero $\zz (F)=\z (\J_F)$. If $\zz( F) \in \F$, we are done,  otherwise repeat Step 3  for $\zz( F)$. \\
 
\end{alg}

The reduction  algorithm is implemented in Magma by Stoll and Cremona, and Sage by Streng and Bouyer, see \cite{streng} for details.  Since we are after the curve with minimal absolute height we applied Stoll-Cremona algorithm implemented in Magma to several curves  to check if this algorithm gives (or not) the curve with minimal height.  

We did the following computations. Start with a genus two  hyperelliptic curve  with height 1.  Compute it's Igusa-Clebsch invariants and then recover the hyperelliptic curve using this invariants. Reduce this curve using Stoll-Cremona reduction algorithm in Magma. 

First we did  this computations for 40 curves with automorphism group $\Z_2$ and in all  cases we got  a twist of the hyperelliptic  curve with height 1 that we started with.  

Then, we did the same  computations for  genus two hyperelliptic curves with automorphism group $D_8$.   We found six cases (out of ten)  where the reduced curve by Stoll-Cremona was not a twist of the original curve with height 1, see the following example.                                                          

\begin{exa}
Let $C$ be the genus 2 curve given by
\[ y^2=  4x^5+4x^3-3x\]

Over $\C$ this curve is isomorphic to the curve  $C_{216}$ in Table 1 of \cite{nato-8}, which has equation
\[ y^2=  -x -x^2+x^4+x^5 .\]
Therefore, this curve $C$ has minimal absolute height 1.

By Cremona-Stoll algorithm implemented in Magma, the minimal model of this curve is 
\[ y^2=3x^5-4x^3-4x\]
which has height 4. This curve is not a twist of the curve $C_{216}$ in \cite{nato-8}.  
\end{exa}

In the following table we give the other five cases. In the first column we give the genus two curve $C$ that we want to reduce, in the second column we give the reduced curve that we get from Magma, and then in the third column we give the curve with height 1 which is $GL_2(\C)$-isomorphic with  $C$. 

\begin{small}
\begin{table}[h]  \label{tab-1}
\centering
\begin{tabular}{|c| c| c|} 
\hline
$g= 2$ curve  & Stoll-Cremona reduced  curve  & Curve with height 1   \\
\hline
$18x^5+18x^3+x$ & $x^5+18x^3+ 18x $ & $-1-x -x^2-x^4+x^5 -x^6$\\
 \hline
$ 100x^5+100x^3+13x$ & $13x^5+100x^3+100x$ & $-1-x -x^5 + x^6 $\\
\hline
$20x^5+20x^3+x$ & $-x^5-20x^3 -20x$ & $ -1-x +x^5 + x^6$ \\
\hline
$16x^5+ 16x^3+5x$ & $-2x^5-8x^3 -10x$ & $  -1-x +x^2+x^4+x^5 - x^6$\\
\hline
$ 36x^5+ 36x^3+x$ & $  x^5+36x^3 +36x$ & $-1 -x^2-x^4- x^6$\\
\hline
\end{tabular} 
\vspace{.3cm}
\end{table}
\end{small}

\section{Further remarks}

The goal of these lectures was to give a survey on the reduction of binary forms and its recent developments.  The work of Cremona and Stoll builds on classical works of Julia and others and provides an efficient way to reduce binary forms up to $GL_2 (\Z)$-equivalence. However, their computation of the Julia quadratic is based on numerical techniques. 
 The purely algebraic approach would be to determine the coefficients of the Julia quadratic directly from the coefficients of the binary form.  It remains to be investigated if this can be achieved.  

In \cite{bin} we prove that the method of reduction via Julia quadratic gives indeed a form of minimal height in the corresponding $GL_2 (\Z)$-orbit.  However, as shown by our computations above it does not give a binary form with minimal absolute height in the sense of \cite{nato-8}.  In \cite{bin} we intend to give a complete treatment of how this can be achieved.


\nocite{*}
\bibliographystyle{amsplain} 

\bibliography{reduction-2}{}


\end{document}